\numberwithin{equation}{section}
\theoremstyle{definition}
\newtheorem{thm}{Theorem}[section]
\newtheorem{cor}[thm]{Corollary}
\newtheorem{prop}[thm]{Proposition}
\newtheorem{defi}[thm]{Definition}
\newtheorem{rem}[thm]{Remark}
\newtheorem{note}[thm]{Notation}
\DeclareMathOperator{\Hc}{\mathcal{H}om}
\DeclareMathOperator{\Ima}{\mathrm{Im}}
\DeclareMathOperator{\p3}{\mathbb{P}^3}
\DeclareMathOperator{\mo}{\mathcal{O}}
\newcommand{\mr}[1]{\mathrm{#1}}
\newcommand{\mb}[1]{\mathbb{#1}}
\newcommand{\mbf}[1]{\mathbf{#1}}
\newcommand{\mc}[1]{\mathcal{#1}}
\newcommand{\ov}[1]{\overline{#1}}
\newcommand{\mf}[1]{\mathfrak{#1}}
\begin{document}

\title[N\'{e}ron models of intermediate Jacobians]
{N\'{e}ron models of intermediate Jacobians associated to moduli spaces}

\author[A. Dan]{Ananyo Dan}

\address{BCAM - Basque Centre for Applied Mathematics, Alameda de Mazarredo 14,
48009 Bilbao, Spain}

\email{adan@bcamath.org}

\author[I. Kaur]{Inder Kaur}

\address{Pontifícia Universidade Cat\'{o}lica do Rio de Janeiro (PUC-Rio), R. Marquês de S\~{a}o Vicente, 225 - G\'{a}vea, Rio de Janeiro - RJ, 22451-900, Brasil}

\email{inder@impa.br}

\subjclass[2010]{Primary $14$C$30$, $14$C$34$, $14$D$07$, $32$G$20$, $32$S$35$, $14$D$20$, Secondary $14$H$40$}

\keywords{Torelli theorem, intermediate Jacobians, N\'{e}ron models, nodal curves, Gieseker moduli space, limit mixed Hodge structures}

\date{\today}

\begin{abstract}
Let $\pi_1:\mc{X} \to \Delta$ be a flat family of smooth, projective curves of genus $g \ge 2$, 
degenerating to an irreducible nodal curve $X_0$ with exactly one node.
Fix an invertible sheaf $\mc{L}$ on $\mc{X}$ of relative odd degree. 
Let $\pi_2:\mc{G}(2,\mc{L}) \to \Delta$ be the relative Gieseker moduli space of rank $2$ semi-stable vector bundles with determinant $\mc{L}$ over $\mc{X}$.
Since $\pi_2$ is smooth over $\Delta^*$, there exists a canonical family $\widetilde{\rho}_i:\mbf{J}^i_{\mc{G}(2, \mc{L})_{\Delta^*}} \to  \Delta^{*}$ of $i$-th intermediate Jacobians
i.e., for all $t \in \Delta^*$, $(\widetilde{\rho}_i)^{-1}(t)$ is the $i$-th intermediate Jacobian of $\pi_2^{-1}(t)$.
There exist different N\'{e}ron models $\ov{\rho}_i:\ov{\mbf{J}}_{\mc{G}(2, \mc{L})}^i \to \Delta$ extending $\widetilde{\rho}_i$ to the entire disc $\Delta$, constructed
by Clemens \cite{cle}, Saito \cite{sai}, Schnell \cite{schnel}, Zucker \cite{zuck} and Green-Griffiths-Kerr \cite{green}. 
In this article, we prove that in our setup, the N\'{e}ron model $\ov{\rho}_i$ is canonical in the sense that 
the different N\'{e}ron models coincide and is an analytic fiber space which graphs admissible normal functions. 
We also show that for 
$1 \le i \le \max\{2,g-1\}$, the central fiber of $\ov{\rho}_i$
is a fibration over product of copies of $J^k(\mr{Jac}(\widetilde{X}_0))$ for certain values of $k$,
where $\widetilde{X}_0$ is the normalization of $X_0$. 
In particular, for $g \ge 5$ and $i=2, 3, 4$, the central fiber of $\ov{\rho}_i$ is a semi-abelian variety.
Furthermore, we prove that the $i$-th generalized intermediate Jacobian of the (singular) central fibre of $\pi_2$ 
is a fibration over the central fibre of the N\'{e}ron model $\ov{\mbf{J}}^i_{\mc{G}(2, \mc{L})}$.
In fact, for $i=2$ the fibration is an isomorphism. 
 \end{abstract}

 \maketitle

\section{Introduction}
Throughout this article the underlying field will be $\mb{C}$.
 Given a smooth, projective variety $Y$, the $k$-th \emph{intermediate Jacobian of} $Y$, denoted $J^k(Y)$ is defined as:
 \begin{equation}\label{ner18}
  J^k(Y):=\frac{H^{2k-1}(Y,\mb{C})}{F^kH^{2k-1}(Y,\mb{C})+H^{2k-1}(Y,\mb{Z})},
 \end{equation}
 where $F^\bullet$ denotes the Hodge filtration.
 The intermediate Jacobian of a smooth, projective variety has been studied for decades and been used to investigate the  
 geometric and arithmetic properties of the variety (see for example \cite{cle, casa, JAD2}). 
 Using variation of Hodge structures \cite{grif1, grif2, GH}, one can further study families of intermediate Jacobians associated to
 smooth families of projective varieties (see for example \cite{grif4, bala1, kane, hori}). 
 In this article, we study the degeneration of certain families of intermediate Jacobians.
 
 Classically, degeneration of families of Jacobians of smooth, projective curves was studied using N\'{e}ron models (see \cite{bn}).
 This has been generalized to study degeneration of families of intermediate Jacobians of higher dimensional smooth, projective 
 varieties by Zucker \cite{zuck}, Clemens \cite{cle}, Saito \cite{sai} and more recently by Green-Griffiths-Kerr \cite{green} and Schnell \cite{schnel}.
 Unfortunately, not all the N\'{e}ron models mentioned in the literature are the same and in most cases are not Hausdorff. However, in the unipotent monodromy case, 
 the N\'{e}ron model of Green-Griffiths-Kerr (GGK) is more natural and arises as an (Hausdorff) analytic fiber space (see \cite{green, sai3}).
 The GGK-N\'{e}ron model has been generalized by Brosnan, Pearlstein and Saito in \cite{brosn2}.
 However, none of the existing literature describes the central fiber of any of the N\'{e}ron models mentioned above.
 In this article we  prove that the different N\'{e}ron models of families of intermediate Jacobians coincide in the case of families of 
 moduli spaces of rank $2$ semi-stable sheaves with fixed determinant.
 Note that Theorem \ref{intthm1} below is a vast generalization of \cite[Theorem $1.2$]{indpre}. In particular, 
 \cite[Theorem $1.2$]{indpre} is the special case of Theorem \ref{intthm1} when restricted to the 
 second intermediate Jacobian (see Corollary \ref{corsemiab}).
 Although this article as well as \cite{indpre} uses common tools from limit mixed Hodge structures, the main results of both articles are independent 
 from one another.
 The main purpose of this article is to  
 give a complete description of the central fiber of the N\'{e}ron model for \emph{all} families of intermediate Jacobians associated to 
 the relative moduli space and compare it with the generalized intermediate Jacobian of 
 the central fiber of the relative moduli space.
 
We fix notations.  Let $\pi_1:\mc{X} \to \Delta$ be a flat family of projective curves of genus $g \ge 2$, 
smooth over the punctured disc $\Delta^*$ such that the central fiber is an irreducible nodal curve $X_0$ with exactly one node.
Fix an invertible sheaf $\mc{L}$ on $\mc{X}$ of odd degree and let $\mc{L}_0:=\mc{L}|_{X_0}$. 
Denote by $\pi_2:\mc{G}(2,\mc{L}) \to \Delta$ the relative Gieseker moduli space of rank $2$ semi-stable sheaves on 
$\mc{X}$ with determinant $\mc{L}$ with central fiber, say $\mc{G}_{X_0}(2,\mc{L}_0)$ (see \S \ref{subsec1}).
   Recall, for every $t \in \Delta^*$, the fiber $\pi_2^{-1}(t)$ is isomorphic to the non-singular moduli space $M_{\mc{X}_t}(2,\mc{L}_t)$ 
   of rank $2$ semi-stable sheaves with determinant 
   $\mc{L}_t$ on $\mc{X}_t$, where $\mc{L}_t:=\mc{L}|_{\mc{X}_t}$  (see for example \cite{ind} for preliminaries on moduli spaces of sheaves with fixed determinant).
   Using the variation of Hodge structures, we obtain a family 
   \[\widetilde{\rho}: \mbf{J}^i_{\mc{G}(2,\mc{L})_{\Delta^*}} \to \Delta^*\] of $i$-th intermediate Jacobians
   such that for all $t \in \Delta^*$, we have $\widetilde{\rho}^{-1}(t)=J^i(M_{\mc{X}_t}(2,\mc{L}_t))$.
    By Theorem \ref{ntor08} below, there exists a GGK-N\'{e}ron model associated to $\widetilde{\rho}$:
   \[\ov{\rho}: \ov{\mbf{J}}^i_{\mc{G}(2,\mc{L})} \to \Delta.\]
   Note that $\ov{\rho}$ is an analytic fiber space and every holomorphic section of 
   $\widetilde{\rho}$ extends to a holomorphic section of $\ov{\rho}$. 
   We also show that the N\'{e}ron model $\ov{\mbf{J}}^i_{\mc{G}(2,\mc{L})}$ coincides with 
   the N\'{e}ron models of Clemens \cite{cle} and Saito \cite{sai} (see Corollary \ref{tor02}).
   We then prove:
  \begin{thm}\label{intthm1} 
  For any $1 \le i \le \max\{2, g-1\}$, the central fiber $\left(\ov{\mbf{J}}^i_{\mc{G}(2, \mc{L})}\right)_0$ of the N\'{e}ron model is a 
  fibration over $\prod\limits_{k=1}^{[\frac{g}{2}]} J^k(\mr{Jac}(\widetilde{X}_0))^{d_{i,k}}$ with every fiber isomorphic to 
  \[\frac{H^{2i-4}(M_{\widetilde{X}_0}(2,\widetilde{\mc{L}}_0),\mb{C})}{F^{i-1}H^{2i-4}(M_{\widetilde{X}_0}(2,\widetilde{\mc{L}}_0),\mb{C})+H^{2i-4}(M_{\widetilde{X}_0}(2,\widetilde{\mc{L}}_0),\mb{Z})}\]
  where $\widetilde{X}_0$ is the normalization of $X_0$, $\widetilde{\mc{L}}_0$ is the pull-back of $\mc{L}_0$ to $\widetilde{X}_0$ and 
  $d_{i,k}$ is the coefficient of $t^{i-3k+1}$ of the polynomial
 $(1+t^3)(1+t+t^2+...+t^{g-1-2k})(1+t^2+t^4+...+t^{2(g-1)-4k}).$
\end{thm}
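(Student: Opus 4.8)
The plan is to reduce the entire statement to an explicit computation of the limit mixed Hodge structure (LMHS) of the variation $R^{2i-1}\pi_{2*}\mb{Z}$ on $\Delta^{*}$. By the description of the GGK-N\'eron model recorded in Theorem \ref{ntor08} and Corollary \ref{tor02}, the central fiber $\left(\ov{\mbf{J}}^i_{\mc{G}(2,\mc{L})}\right)_0$ is the intermediate Jacobian of this LMHS, built from the weight filtration $W_\bullet$ (centred at $2i-1$) of $H^{2i-1}_{\lim}$. The weight filtration induces the asserted fibration: the pure quotient $J(\mr{gr}^W_{2i-1}H^{2i-1}_{\lim})$ is the (abelian) base, while the kernel is the intermediate Jacobian $J(\mr{gr}^W_{2i-2}H^{2i-1}_{\lim})$ of the even-weight graded piece. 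It therefore suffices to identify these two graded pieces as Hodge structures.

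First I would recall the structure of $H^{*}(M_{\mc{X}_t}(2,\mc{L}_t))$, organised by the monodromy group $\mr{Sp}(2g,\mb{Z})$. In the range $1\le i\le\max\{2,g-1\}$ the degree-$(2i-1)$ cohomology is, as a variation of Hodge structures, a sum of pieces built from the exterior powers $\bigwedge^{m}H^1(\mc{X}_t)$ --- the degree-$3m$ classes generated by the odd K\"unneth components of $c_2$ of the universal bundle --- tensored with Tate multiplicity spaces spanned by monomials in the invariant generators $\alpha\in H^2$, $\beta\in H^4$, $\gamma\in H^6$ of complementary degree. Because this decomposition is functorial in the weight-$1$ variation $R^1\pi_{1*}\mb{Q}$, the LMHS of $H^{2i-1}_{\lim}$ is obtained by applying the same recipe to the LMHS of $H^1(\mc{X}_t)$; the relative Gieseker family $\pi_2$ supplies the semistable-type degeneration that makes this identification, and the monodromy weight filtration, rigorous.

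Next I would insert the LMHS of the one-nodal degeneration of curves, namely $\mr{gr}^W_0 H^1_{\lim}=\mb{Q}$, $\mr{gr}^W_1 H^1_{\lim}=H^1(\widetilde{X}_0)$ and $\mr{gr}^W_2 H^1_{\lim}=\mb{Q}(-1)$, with $N$ an isomorphism $\mr{gr}^W_2 H^1_{\lim}\cong\mr{gr}^W_0 H^1_{\lim}$. Since $N$ is a derivation and $\mr{gr}^W_2 H^1_{\lim}$ is one-dimensional, $N^2=0$ on $H^{2i-1}_{\lim}$, so the weight filtration is concentrated in weights $2i-2,2i-1,2i$ and the central fiber is a two-step extension. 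The pure weight-$(2i-1)$ part is carried by the pieces whose exterior factor specializes to $\bigwedge^{2k-1}H^1(\widetilde{X}_0)=H^{2k-1}(\mr{Jac}(\widetilde{X}_0))$, so that $\mr{gr}^W_{2i-1}H^{2i-1}_{\lim}\cong\bigoplus_k H^{2k-1}(\mr{Jac}(\widetilde{X}_0))(-(i-k))^{\oplus d_{i,k}}$, where $d_{i,k}$ is the dimension of the degree-$(i-3k+1)$ part of the multiplicity space; since the ring relations for the genus-$(g-1)$ curve bound the exponents of $\alpha,\beta$ and the degree-$6$ class, this dimension is exactly the coefficient of $t^{i-3k+1}$ in $(1+t^3)(1+t+\cdots+t^{g-1-2k})(1+t^2+\cdots+t^{2(g-1)-4k})$, and taking intermediate Jacobians produces the base $\prod_k J^k(\mr{Jac}(\widetilde{X}_0))^{d_{i,k}}$. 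Symmetrically, the classes that place one degree-$3$ generator into $\mr{gr}^W_0 H^1_{\lim}$ fill $\mr{gr}^W_{2i-2}H^{2i-1}_{\lim}$; stripping this single $\mb{Q}(-1)$ reassembles them, term by term, into $H^{2i-4}(M_{\widetilde{X}_0}(2,\widetilde{\mc{L}}_0))(-1)$, whose intermediate Jacobian --- the Tate twist turning $F^i$ into $F^{i-1}$ --- is precisely the stated fiber.

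The hard part will be the previous paragraph: controlling the weight filtration on $H^{2i-1}_{\lim}$ finely enough both to separate the pure part from the node part and, more delicately, to identify $\mr{gr}^W_{2i-2}H^{2i-1}_{\lim}$ canonically with $H^{2i-4}(M_{\widetilde{X}_0}(2,\widetilde{\mc{L}}_0))(-1)$ as a Hodge structure, rather than merely matching Hodge numbers. For this I would use the explicit geometry of the Gieseker central fiber $\mc{G}_{X_0}(2,\mc{L}_0)$ and its $\mb{P}$-bundle relation to $M_{\widetilde{X}_0}(2,\widetilde{\mc{L}}_0)$ over the node, together with the Clemens--Schmid exact sequence to pin down $N$ and the relevant extension classes. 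The restriction $i\le\max\{2,g-1\}$ enters exactly here, keeping us inside the range where the cohomology ring is governed by the above generating function without interference from higher relations; once these identifications are secured, checking that the multiplicities agree with the generating function is a routine, if lengthy, bookkeeping computation.
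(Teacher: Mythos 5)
Your proposal follows essentially the same route as the paper's proof (Theorem \ref{ntor12}): reduce the central fibre to the intermediate Jacobian of $W_{2i-1}H^{2i-1}(\mc{G}(2,\mc{L})_\infty)$ using the vanishing of $G_{2i-1}$, let the two-step weight filtration produce the fibration, identify $\mr{Gr}^W_{2i-1}$ via the relative Mumford--Newstead isomorphism together with the Newstead/King--Newstead ring structure and del Ba\~{n}o's decomposition, and identify $\mr{Gr}^W_{2i-2}$ with $H^{2i-4}(M_{\widetilde{X}_0}(2,\widetilde{\mc{L}}_0))(-1)$ by stripping the weight-two class coming from the node. The differences are only organizational: the paper obtains the $(1+t^3)$ factor from the splitting $J^{i-3}(M_{\widetilde{X}_0}(2,\widetilde{\mc{L}}_0))\times J^{i}(M_{\widetilde{X}_0}(2,\widetilde{\mc{L}}_0))$ governed by the type-$(3,3)$ limit class $\psi_g^\infty\psi_{2g}^\infty$ rather than from the invariant degree-six generator, and it works with explicit monomial bases rather than the central-fibre geometry and Clemens--Schmid.
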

See Theorem \ref{ntor12} for a more general statement and proof. Theorem  \ref{intthm1} implies that the central fiber of the N\'{e}ron model
is never an abelian variety. However, we observe that for $i=2, 3$ and $4$, the central fiber of the N\'{e}ron model is a 
semi-abelian variety (Corollary \ref{corsemiab}). Recall, the $i$-th generalized intermediate Jacobian of $\mc{G}_{X_0}(2,\mc{L}_0)$, 
denoted $J^i(\mc{G}_{X_0}(2,\mc{L}_0))$, is defined analogously to \eqref{ner18}, with the relevant cohomology groups equipped with a mixed Hodge structure.
We prove:
\begin{thm}\label{intthm2}
The $i$th-intermediate Jacobian $J^i(\mc{G}_{X_0}(2,\mc{L}_0))$ is a fibration over $\left(\ov{\mbf{J}}^i_{\mc{G}(2, \mc{L})}\right)_0$ with every fiber isomorphic to
$J^{i-1}(M_{\widetilde{X}_0}(2, \widetilde{\mc{L}}_0)) \times J^{i-2}(M_{\widetilde{X}_0}(2, \widetilde{\mc{L}}_0)) \times J^{i-3}(M_{\widetilde{X}_0}(2, \widetilde{\mc{L}}_0))$.
\end{thm}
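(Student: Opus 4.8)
The plan is to compare the two objects through the specialization map of mixed Hodge structures and then to apply the intermediate Jacobian functor. Recall from the proof of Theorem \ref{ntor12} that, since the unipotent monodromy logarithm $N$ satisfies the local invariant cycle theorem, the central fibre of the N\'eron model is canonically
$$\left(\ov{\mbf{J}}^i_{\mc{G}(2, \mc{L})}\right)_0 \;\cong\; J^i(\ker N) \;=\; \frac{(\ker N)_{\mb{C}}}{F^i(\ker N)_{\mb{C}} + (\ker N)_{\mb{Z}}},$$
where $\ker N = W_{2i-1} H^{2i-1}_{\lim}$ is the monodromy-invariant part of the limit mixed Hodge structure, equipped with its limit Hodge and weight filtrations. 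First I would invoke the Clemens--Schmid exact sequence for (a semistable model of) the family $\pi_2$: the specialization map
$$\mathrm{sp}\colon H^{2i-1}(\mc{G}_{X_0}(2,\mc{L}_0),\mb{Q}) \longrightarrow H^{2i-1}_{\lim}$$
is a morphism of mixed Hodge structures whose image is exactly $\ker N$. This yields a short exact sequence of mixed Hodge structures
$$0 \to K \to H^{2i-1}(\mc{G}_{X_0}(2,\mc{L}_0),\mb{Q}) \xrightarrow{\ \mathrm{sp}\ } \ker N \to 0, \qquad K:=\ker(\mathrm{sp}).$$

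Applying $J^i(-)$ and using strictness of morphisms of mixed Hodge structures with respect to $F^\bullet$ and $W_\bullet$, the surjection $\mathrm{sp}$ induces a surjection $J^i(\mc{G}_{X_0}(2,\mc{L}_0)) \twoheadrightarrow J^i(\ker N) = \left(\ov{\mbf{J}}^i_{\mc{G}(2, \mc{L})}\right)_0$; this is the asserted fibration, and its fibre is $J^i(K)$. It therefore remains to identify $K$ as a mixed Hodge structure and to compute $J^i(K)$. Here I would use the explicit model of the special fibre $\mc{G}_{X_0}(2,\mc{L}_0)$ as a reduced normal crossing variety whose irreducible components and their double locus are projective bundles over $M_{\widetilde{X}_0}(2,\widetilde{\mc{L}}_0)$ (the Nagaraj--Seshadri/Gieseker description already used earlier in the article), together with the associated weight (Steenbrink) spectral sequence that computes both $H^{2i-1}_{\lim}$ and $\mathrm{sp}$. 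The outcome I expect is that $K$ is pure of weight $2i-1$ and splits as
$$K \;\cong\; \bigoplus_{j=1}^{3} H^{2i-1-2j}\!\left(M_{\widetilde{X}_0}(2,\widetilde{\mc{L}}_0),\mb{Q}\right)(-j).$$
Concretely, the three Tate twists should account for the even cohomology classes in degrees $2,4,6$ produced by the $\mb{P}^3$-compactification $\mb{P}(\mathrm{Hom}(E_{p},E_{q}))$ of the gluing parameter at the node, whose open $\mathrm{SL}_2 \simeq S^3$ locus governs the surviving monodromy-invariant part.

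Granting this, additivity of the intermediate Jacobian functor on direct sums together with the identity $J^i\!\big(H^{2i-1-2j}(M_{\widetilde{X}_0}(2,\widetilde{\mc{L}}_0))(-j)\big) = J^{i-j}(M_{\widetilde{X}_0}(2,\widetilde{\mc{L}}_0))$ gives
$$J^i(K) \;\cong\; \prod_{j=1}^{3} J^{i-j}\!\left(M_{\widetilde{X}_0}(2,\widetilde{\mc{L}}_0)\right),$$
which is exactly the stated fibre. As a sanity check, for $i=2$ the three factors are $J^{1}$, $J^{0}$, $J^{-1}$ of $M_{\widetilde{X}_0}(2,\widetilde{\mc{L}}_0)$; since this moduli space has $H^{1}=0$, all three vanish, so the fibration degenerates to an isomorphism, recovering the last assertion of the statement.

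The main obstacle is the identification of $K$: establishing its purity and the precise splitting above requires a careful analysis of the weight spectral sequence of the degeneration and of the specialization map in terms of the strata of $\mc{G}_{X_0}(2,\mc{L}_0)$, rather than merely matching Betti numbers. A secondary technical point is to verify that $J^i(-)$ turns the short exact sequence of mixed Hodge structures into a genuine fibre bundle with fibre exactly $J^i(K)$ --- i.e.\ that $J^i(K)\to J^i(\mc{G}_{X_0}(2,\mc{L}_0))$ is injective --- which should follow from the strictness properties of the limit and specialization morphisms but must be checked.
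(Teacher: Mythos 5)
Your overall strategy coincides with the paper's: both proofs realize $\left(\ov{\mbf{J}}^i_{\mc{G}(2, \mc{L})}\right)_0$ as $J^i$ of the image of the specialization map (via the local invariant cycle theorem and the vanishing $N_{2i-1}^2=0$), reduce the statement to identifying $K=\ker(\mr{sp}_{2i-1})$ as a Hodge structure, and your guessed answer $K\cong\bigoplus_{j=1}^{3}H^{2i-1-2j}(M_{\widetilde{X}_0}(2,\widetilde{\mc{L}}_0),\mb{Q})(-j)$ is exactly what the paper obtains. The sanity check for $i=2$ and the remark that exactness of $J^i$ on the resulting short exact sequence follows from weight considerations are both sound.

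However, the identification of $K$ --- which you correctly flag as ``the main obstacle'' --- is precisely the content of the paper's proof, and your sketch of where it comes from points at the wrong piece of the geometry. You attribute the three Tate twists to ``the even cohomology classes in degrees $2,4,6$ produced by the $\mb{P}^3$-compactification of the gluing parameter''; in fact the computation runs through the \emph{double locus} $\mc{G}_0\cap\mc{G}_1$, which is a $\mb{P}^1\times\mb{P}^1$-bundle over $M_{\widetilde{X}_0}(2,\widetilde{\mc{L}}_0)$. By Proposition \ref{ntor03}, $K$ is the image of the Gysin map $f_{2i-1}$ on $H^{2i-3}(\mc{G}_0\cap\mc{G}_1,\mb{Q})(-1)$; Leray--Hirsch gives $H^{2i-3}(\mc{G}_0\cap\mc{G}_1)\cong H^{2i-3}(M)\oplus H^{2i-5}(M)(-1)^{\oplus 2}\oplus H^{2i-7}(M)(-2)$, and one of the two weight-$(2i-5)$ copies is killed because it lies in the kernel of the Gysin map (Proposition \ref{ner15}). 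Establishing that this kernel is the same whether one pushes forward to $\mc{G}_1$ and $\mc{G}_0$ or to $\mc{G}_1$ and the $\ov{\mr{SL}}_2$-bundle $P_0$ is itself a nontrivial step: since $\mc{G}_0$ is only birational to $P_0$, the paper passes through a common smooth resolution $W$ and uses strictness of weight filtrations to show $\ker(i_{1,*})\cong\ker(i_{3,*})$. None of this is supplied or even correctly anticipated in your outline, so as written the argument has a genuine gap at its central step, even though the framework and the final answer are right.
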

See Theorem \ref{ner16} for a proof. This answers a question posed by Green, Griffiths and Kerr in \cite[p. $293$]{green} for $\mc{G}(2,\mc{L})$.
As a consequence, we observe that $J^2(\mc{G}_{X_0}(2,\mc{L}_0))$ is isomorphic to the central fiber 
$\left(\ov{\mbf{J}}^2_{\mc{G}(2, \mc{L})}\right)_0$ of the N\'{e}ron model. In particular, $J^2(\mc{G}_{X_0}(2,\mc{L}_0))$ is a semi-abelian variety (see Corollary \ref{ntor18}).
Theorems \ref{intthm1} and \ref{intthm2} generalize the classical result that
$\mr{Jac}(X_0)$ is a $\mathbb{C}^*$-fibration over $\mr{Jac}(\widetilde{X}_0)$. We now discuss the strategy of the proofs.

Denote by $\mc{G}(2,\mc{L})_\infty$ (resp. $\mc{X}_\infty$) the base change of $\mc{G}(2,\mc{L})$ (resp. $\mc{X}$)
under the composed morphism $\mf{h} \xrightarrow{e} \Delta^* \hookrightarrow \Delta$, where $\mf{h}$ is the universal cover of $\Delta^*$.
The cohomology groups $H^i(\mc{G}(2,\mc{L})_\infty, \mb{Z})$ and $H^i(\mc{X}_\infty, \mb{Z})$ are equipped with a (limit) mixed Hodge structure (see Theorem \ref{tor25}).
Moreover, one has a natural monodromy action on $H^i(\mc{G}(2,\mc{L})_\infty, \mb{Z})$. Denote by $N_{i,\mb{C}}$ (resp. $N_{i,\mb{Z}}$)
the monodromy invariant subspace of $H^i(\mc{G}(2,\mc{L})_\infty, \mb{C})$ (resp. $H^i(\mc{G}(2,\mc{L})_\infty, \mb{Z})$). Note that the mixed Hodge structure
on $H^i(\mc{G}(2,\mc{L})_\infty, \mb{Z})$ induces a mixed Hodge structure on $N_{i,\mb{Z}}$ (see \cite[Chapter $11$]{pet}). Let 
$J'_i:=N_{2i-1,\mb{C}}/(F^i N_{2i-1,\mb{C}}+N_{2i-1,\mb{Z}})$.
The central fiber $\left(\ov{\mbf{J}}^i_{\mc{G}(2, \mc{L})}\right)_0$ of the GGK-N\'{e}ron model sits in the short exact sequence:
\[0 \to J'_i \to \left(\ov{\mbf{J}}^i_{\mc{G}(2, \mc{L})}\right)_0 \to G_{2i-1} \to 0,\]
where $G_{2i-1}$ is a finite group encoding the monodromy action on $H^{2i-1}(\mc{G}(2,\mc{L})_\infty, \mb{Z})$ (Theorem \ref{ntor08}).
We prove that in our setup, the group $G_{2i-1}$ vanishes (Theorem \ref{ntor14}). As a consequence, 
$\left(\ov{\mbf{J}}^i_{\mc{G}(2, \mc{L})}\right)_0$ is connected. 

Recall, the $i$-th intermediate Jacobian of a smooth, projective variety $Y$ is a quotient of $H^{2i-1}(Y,\mb{C})$, which is a pure Hodge structure
of weight $2i-1$. Now that we have $J'_i \cong \left(\ov{\mbf{J}}^i_{\mc{G}(2, \mc{L})}\right)_0$ and 
$J'_i$ is a quotient of $N_{2i-1,\mb{C}}$ (which is a mixed Hodge structure), it is natural to ask if the image of 
$\mr{Gr}^W_{2i-1} N_{2i-1,\mb{C}}$ in $\left(\ov{\mbf{J}}^i_{\mc{G}(2, \mc{L})}\right)_0$ (see Definition \ref{def:ner01}), 
which we denote by $pure\left(\ov{\mbf{J}}^i_{\mc{G}(2, \mc{L})}\right)_0$,
is an abelian variety. We prove that (see Theorem \ref{ntor12} and Corollary \ref{corsemiab}):
\begin{thm}\label{intthm3}
 For $1 \le i \le \max\{2, g-1\}$, $pure\left(\ov{\mbf{J}}^i_{\mc{G}(2, \mc{L})}\right)_0 \cong 
 \prod\limits_{k=1}^{[\frac{g}{2}]} J^k(\mr{Jac}(\widetilde{X}_0))^{d_{i,k}}$, where $d_{i,k}$ is as in Theorem \ref{intthm1}.
 In particular, for $g \ge 5$ and $i=2,3, 4$, $pure\left(\ov{\mbf{J}}^i_{\mc{G}(2, \mc{L})}\right)_0$ is an abelian variety.
\end{thm}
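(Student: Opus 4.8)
The plan is to translate the statement into a computation with the limit mixed Hodge structure and then to feed in the classical description of the cohomology ring of $M_C(2,L)$. Since the vanishing of $G_{2i-1}$ (Theorem~\ref{ntor14}) already gives $J'_i\cong\left(\ov{\mbf{J}}^i_{\mc{G}(2, \mc{L})}\right)_0$, and since $pure\left(\ov{\mbf{J}}^i_{\mc{G}(2, \mc{L})}\right)_0$ is by Definition~\ref{def:ner01} the image of $\mr{Gr}^W_{2i-1}N_{2i-1,\mb{C}}$, the problem reduces to determining the pure weight-$(2i-1)$ Hodge structure $\mr{Gr}^W_{2i-1}N_{2i-1}$ together with the torus it cuts out. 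First I would note that this torus is an abelian variety precisely when $\mr{Gr}^W_{2i-1}N_{2i-1}$ is, up to a Tate twist, of level one; thus the entire content is the explicit identification of this Hodge structure.

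The geometric input is the Newstead--Atiyah--Bott description: for a smooth curve $C$, the ring $H^\bullet(M_C(2,L))$ is generated, via the K\"unneth components of the Chern classes of the (normalised) universal bundle on $C\times M_C(2,L)$, by Tate classes $\alpha\in H^2$, $\beta\in H^4$, $\gamma\in H^6$ and by $2g$ classes $\psi_j\in H^3$ spanning a Hodge structure isomorphic to $H^1(C)(-1)$. These generators are natural in the curve, so the construction spreads out over $\Delta$ and shows that the limit mixed Hodge structure on $H^\bullet(\mc{G}(2,\mc{L})_\infty)$ is functorially built from that on $H^\bullet(\mc{X}_\infty)$ (Theorem~\ref{tor25}). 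For a one-nodal degeneration the latter is classical: the monodromy-invariant part of $H^1(\mc{X}_\infty)$ equals $W_1$, with $\mr{Gr}^W_1\cong H^1(\widetilde{X}_0)$ and the weight-$0$ and weight-$2$ pieces being the Tate lines attached to the node \cite{pet}. Consequently only the $\psi$-classes carry non-Tate weight, contributing $H^1(\widetilde{X}_0)(-1)$, and $\mr{Gr}^W_{2i-1}N_{2i-1}$ is a direct sum of Tate twists of exterior powers of $H^1(\widetilde{X}_0)$.

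To extract the multiplicities I would exploit the $\mr{Sp}(2(g-1),\mb{Q})$-action on $H^1(\widetilde{X}_0)$. Decomposing the subring generated by the $\psi_j$ into isotypic pieces, the piece of type $\wedge^{2k-1}_{\prim}H^1(\widetilde{X}_0)$ is a module over the subring generated by $\alpha,\beta,\gamma$; modulo the Mumford relations this module has, in the variable $t$ recording half the excess cohomological degree, Poincar\'e polynomial $(1+t^3)(1+t+\dots+t^{g-1-2k})(1+t^2+\dots+t^{2(g-1)-4k})$, where $\gamma,\alpha,\beta$ contribute $t^3,t,t^2$ and the relations force $\gamma^c\alpha^a\beta^b$ with $c\le 1$ and $a,b\le (g-1)-2k$. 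The cohomological degree $2i-1$ part is then read off as the coefficient of $t^{i-3k+1}$, which is $d_{i,k}$. As the intermediate Jacobian is unchanged by Tate twists, each summand contributes a factor $J^k(\mr{Jac}(\widetilde{X}_0))$ (the detailed identification being carried out in Theorem~\ref{ntor12}), yielding $pure\left(\ov{\mbf{J}}^i_{\mc{G}(2, \mc{L})}\right)_0\cong\prod_{k=1}^{[\frac{g}{2}]}J^k(\mr{Jac}(\widetilde{X}_0))^{d_{i,k}}$. The main obstacle is precisely this bookkeeping: one has to check that the Mumford relations truncate the $\alpha,\beta,\gamma$-module to exactly the stated ranges and respect the isotypic decomposition, so that the multiplicity equals the claimed coefficient and no cross-terms appear.

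For the abelian-variety assertion it then suffices to evaluate the polynomial. For $i\in\{2,3,4\}$ the exponent $i-3k+1$ is negative as soon as $k\ge 2$, so $d_{i,k}=0$ for $k\ge 2$ and only $J^1(\mr{Jac}(\widetilde{X}_0))=\mr{Jac}(\widetilde{X}_0)$ survives, with $d_{2,1}=d_{3,1}=1$ and $d_{4,1}=2$; the last value needs $(g-1)-2\ge 2$, i.e.\ $g\ge 5$, so that the truncation in the first and second factors does not interfere. A finite product of copies of $\mr{Jac}(\widetilde{X}_0)$ is an abelian variety, giving the final claim.
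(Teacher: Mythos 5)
Your proposal follows the same overall strategy as the paper: Theorem \ref{ntor14} kills $G_{2i-1}$, $\ker N_{2i-1,\mb{C}}=W_{2i-1}H^{2i-1}(\mc{G}(2,\mc{L})_\infty,\mb{C})$, so $pure\left(\ov{\mbf{J}}^i_{\mc{G}(2, \mc{L})}\right)_0$ is the torus cut out by $\mr{Gr}^W_{2i-1}$; the limit Mumford--Newstead isomorphism (Theorem \ref{ner01}) together with Theorem \ref{ntor11} shows that, among the Newstead generators of $H^*(\mc{G}(2,\mc{L})_\infty,\mb{Q})$, exactly $2(g-1)$ of the $\psi$-classes carry the non-Tate Hodge structure $H^1(\widetilde{X}_0)(-1)$ while the remaining two, $\psi^\infty_g$ and $\psi^\infty_{2g}$, become Tate of weights $2$ and $4$. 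Where you diverge is the decisive bookkeeping step. The paper (Proposition \ref{ner12}) uses the King--Newstead monomial basis to split $\mr{Gr}^W_{2i-1}H^{2i-1}(\mc{G}(2,\mc{L})_\infty)$ into monomials containing $\psi^\infty_g\psi^\infty_{2g}$ and monomials containing neither, obtaining $H^{2i-7}(M_{\widetilde{X}_0}(2,\widetilde{\mc{L}}_0))(-3)\oplus H^{2i-1}(M_{\widetilde{X}_0}(2,\widetilde{\mc{L}}_0))$ --- this is the sole source of the factor $(1+t^3)$ --- and then quotes del Ba\~{n}o's formula for the intermediate Jacobians of the smooth genus-$(g-1)$ moduli space to produce the other two factors. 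You instead attribute $(1+t^3)$ to the invariant class $\gamma$ and propose a from-scratch $\mr{Sp}(2(g-1),\mb{Q})$-isotypic computation with the Mumford relations.

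That step, which you yourself flag as ``the main obstacle,'' is where the gaps lie. First, the assertion that the multiplicity module of the $\wedge^{2k-1}_{\prim}$-piece is the monomial box $\{\gamma^c\alpha^a\beta^b:\ c\le 1,\ a,b\le g-1-2k\}$ is precisely the hard content and is not proved; the known presentation of these modules (King--Newstead) is as a complete intersection $\mb{Q}[\alpha,\beta,\gamma]/I_r$ with relations in consecutive degrees, which is not a monomial box, and in the limit $\gamma_\infty=-2\sum_j\psi^\infty_j\psi^\infty_{j+g}$ mixes $\psi^\infty_g\psi^\infty_{2g}$ with the invariant weight-$6$ class of $\wedge^2\bigl(H^1(\widetilde{X}_0)(-1)\bigr)$, so your $(1+t^3)$ and the paper's do not literally coincide without a change of basis. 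Second, and more seriously, the target of the theorem is a product of $J^k(\mr{Jac}(\widetilde{X}_0))$, i.e.\ tori attached to the \emph{full} exterior powers $\wedge^{2k-1}H^1(\widetilde{X}_0)$, whereas your decomposition is by \emph{primitive} isotypic pieces; the two multiplicity generating functions differ by the Lefschetz regrouping, so they cannot both equal the coefficient of $t^{i-3k+1}$ in the same polynomial. Repackaging the primitive pieces into full exterior powers with multiplicities exactly $d_{i,k}$ is what the citation of del Ba\~{n}o accomplishes in the paper, and your phrase ``each summand contributes a factor $J^k(\mr{Jac}(\widetilde{X}_0))$'' assumes it silently. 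The remaining parts of your argument --- the reduction to $\mr{Gr}^W_{2i-1}$ and the evaluation $d_{2,1}=d_{3,1}=1$, $d_{4,1}=2$, $d_{i,k}=0$ for $k\ge 2$ when $g\ge 5$ --- are correct and agree with Theorem \ref{ntor12} and Corollary \ref{corsemiab}.
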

One of the key steps to prove this theorem is to show that there 
exists an isomorphism of mixed Hodge structures from $H^1(\mc{X}_\infty,\mb{Z})$ to $H^3(\mc{G}(2,\mc{L})_\infty,\mb{Z})$ (Theorem \ref{ner01}).
This is a generalization to the relative setup of a classical result \cite[Proposition $1$]{mumn} of Mumford and Newstead. Finally, we 
compute the kernel of the natural morphism from $\left(\ov{\mbf{J}}^i_{\mc{G}(2, \mc{L})}\right)_0$ to $pure\left(\ov{\mbf{J}}^i_{\mc{G}(2, \mc{L})}\right)_0$.
This will give us a complete description of the central fiber of the N\'{e}ron model as given in Theorem \ref{intthm1}.

We remark that Theorems \ref{intthm1} and \ref{intthm3} still hold if we replace $\mc{G}(2,\mc{L})$ by the (relative) Simpson's 
moduli space of rank $2$ semi-stable sheaves with determinant $\mc{L}$ as defined in \cite{sun1}.
This is because both (relative) moduli spaces coincide over $\Delta^*$, hence have the same N\'{e}ron models of the associated family of 
intermediate Jacobians. 
We use (relative) Gieseker moduli space simply because
the central fiber of this moduli space is a simple normal crossings divisor, which makes computations using Steenbrink spectral sequence possible.

{\emph{Applications and further questions:}} 
Using Theorems \ref{ner01} and \ref{ntor14}, one can prove the higher rank Torelli theorem for $\mc{G}_{X_0}(2,\mc{L}_0)$ (see \cite{indpre}). 
This is a generalization to the nodal curve case of a classical result of Mumford and Newstead \cite{mumn}.
Since the above N\'{e}ron models graph admissible normal functions (i.e., holomorphic sections of $\widetilde{\rho}$ extend holomorphically to that of $\ov{\rho}$), 
another application is to study the limit Abel-Jacobi map as described by Green, Griffiths and Kerr in \cite{green}.

Compactification of Jacobians of curves and moduli spaces is an active topic of research in algebraic geometry 
 (see for example \cite{alex2, capon, est}). Analogously one can ask, what is
the compactification of the $i$-th intermediate Jacobian 
$J^i(\mc{G}_{X_0}(2,\mc{L}_0))$?
 By Theorem \ref{intthm2}, 
the fibers to the natural morphism from $J^i(\mc{G}_{X_0}(2,\mc{L}_0))$ to $\left(\ov{\mbf{J}}^i_{\mc{G}(2, \mc{L})}\right)_0$ are abelian varieties.
Therefore, by Theorem \ref{intthm1}, to compactify $J^i(\mc{G}_{X_0}(2,\mc{L}_0))$ we simply need to obtain a suitable compactification of  
$H^{2i-4}(M_{\widetilde{X}_0}(2,\widetilde{\mc{L}}_0),\mb{C})/(F^{i-1}H^{2i-4}(M_{\widetilde{X}_0}(2,\widetilde{\mc{L}}_0),\mb{C})+H^{2i-4}(M_{\widetilde{X}_0}(2,\widetilde{\mc{L}}_0),\mb{Z}))$
which deforms ``uniformly'' along $\prod\limits_{k=1}^{[\frac{g}{2}]} J^k(\mr{Jac}(\widetilde{X}_0))^{d_{i,k}}$. 
We pursue this question in future work.

 \emph{Outline}: In \S \ref{nsec1} we review preliminaries on N\'{e}ron models
 and limit mixed Hodge structure. 
 In \S \ref{nsec3} we study the monodromy action on the relative Gieseker moduli space and show that the different N\'{e}ron models coincide. 
 In \S \ref{nsec4} we give a geometric description of the central
 fiber of the GGK-N\'{e}ron model. In \S \ref{nsec5}, we introduce the $i$-th intermediate Jacobian of $\mc{G}_{X_0}(2,\mc{L}_0)$ and relate it to the 
 central  fiber of the N\'{e}ron model.

\vspace{0.2 cm}
\emph{Acknowledgements} 
We thank Prof. J. F. de Bobadilla, Dr. B. Sigurdsson and Dr. S. Basu for numerous discussions. 
The first author is currently supported by ERCEA Consolidator Grant $615655$-NMST and also
by the Basque Government through the BERC $2014-2017$ program and by Spanish
Ministry of Economy and Competitiveness MINECO: BCAM Severo Ochoa
excellence accreditation SEV-$2013-0323$. 
The second author is funded by CAPES-PNPD scholarship. 

\section*{List of Notations}

\begin{center}
 \begin{tabular}{l p{10cm}}
  $X_0,x_0$ & irreducible nodal curve $X_0$ with node at $x_0$\\
  $\pi: \widetilde{X}_0 \to X_0$ & normalization of $X_0$\\
  $\Delta, \Delta^*$ & open, unit disc $\Delta$ and $\Delta^*:=\Delta \backslash \{0\}$\\
  $\rho: \mc{Y} \to \Delta$ & family of projective varieties, smooth over $\Delta^*$\\
  $\mc{Y}_t$ & the fiber $\rho^{-1}(t)$ for any $t \in \Delta$\\
  $\mc{Y}_\infty$ & the base change of the family $\rho$ under the natural morphism $\mf{h} \to \Delta^* \hookrightarrow \Delta$, where
  $\mf{h}$ is the universal covering of $\Delta^*$\\
  $\mc{Y}_{\Delta^*}$ & restriction of $\mc{Y}$ to $\Delta^*$\\
  $\mc{H}^i_{\mc{Y}_{\Delta^*}}, F^p\mc{H}^i_{\mc{Y}_{\Delta^*}}$ & Hodge bundles associated to the family $\mc{Y}_{\Delta^*}$\\
  $\ov{\mc{H}}^i_{\mc{Y}_{\Delta^*}}, F^p\ov{\mc{H}}^i_{\mc{Y}_{\Delta^*}}$ & canonical extensions of 
  $\mc{H}^i_{\mc{Y}_{\Delta^*}}, F^p\mc{H}^i_{\mc{Y}_{\Delta^*}}$, respectively\\
  $\widetilde{\rho}: \mbf{J}^i_{\mc{Y}_{\Delta^*}} \to \Delta^*$ & family of $i$-th intermediate Jacobians associated to  $\mc{Y}_{\Delta^*}$\\
  $\ov{\rho}: \ov{\mbf{J}}^i_{\mc{Y}} \to \Delta$ & N\'{e}ron model associated to $\widetilde{\rho}$\\
  $T_{s,i}, T_{s,i}^{\mb{Q}}$ & local monodromy transformation associated to  $\rho$\\
  $T_i:H^i(\mc{Y}_\infty, \mb{Q}) \to H^i(\mc{Y}_\infty, \mb{Q})$ & limit monodromy transformation\\
  $N_i$ & $\log(T_i)$\\
  $\mr{sp}_i: H^i(\mc{Y}_0,\mb{Z}) \to H^i(\mc{Y}_\infty, \mb{Z})$ & specialization morphism\\
  $M_Y(2,\mc{L}')$ & moduli space of rank $2$, semi-stable sheaves with determinant $\mc{L}'$ over $Y$\\
  $\pi_1: \mc{X} \to \Delta$ & family of projective curves with central fiber $X_0$, smooth over $\Delta^*$\\
  $\mc{L}, \mc{L}_0, \widetilde{\mc{L}}_0$ & odd degree invertible sheaf $\mc{L}$ on $\mc{X}$, $\mc{L}_0:=\mc{L}|_{X_0}$, $\widetilde{\mc{L}}_0:=\pi^*\mc{L}_0$\\
  $\widetilde{\pi}_1: \widetilde{\mc{X}} \to \mc{X} \xrightarrow{\pi} \Delta$ & blow-up of $\mc{X}$ at $x_0$\\
  $\pi_2: \mc{G}(2,\mc{L}) \to \Delta$ & relative Gieseker moduli space associated to $\pi_1$\\
  $\mc{G}_{X_0}(2,\mc{L}_0)$ & central fiber of the moduli space $\mc{G}(2,\mc{L})$\\
  $\mc{G}_0, \mc{G}_1$ & the two irreducible components of  $\mc{G}_{X_0}(2,\mc{L}_0)$
 \end{tabular}
\end{center}

\section{Preliminaries: N\'{e}ron models of families of intermediate Jacobians}\label{nsec1}
In this section, we recall preliminaries on the N\'{e}ron model of families of intermediate Jacobians. 
We assume basic familiarity with limit mixed Hodge structures. See \cite[\S $11$]{pet} for a detailed study. 

\begin{note}
 Let $\rho:\mc{Y} \to \Delta$ be a flat family of projective varieties, smooth over $\Delta^*$. Let 
 $\rho':\mc{Y}_{\Delta^*} \to \Delta^*$ be the restriction 
    of $\rho$ to $\Delta^*$. 
\end{note}

\subsection{Families of intermediate Jacobians}
Denote by $\mb{H}_{\mc{Y}_{\Delta^*}}^i:=R^i{\rho}'_{*}\mb{Z}$.
By Ehresmann's theorem (see \cite[Theorem $9.3$]{v4}), we have for all $i \ge 0$, 
$H^i(\mc{Y}_t, \mb{Z})$ is constant as $t$ varies over all $t \in \Delta^*$. This implies that $\mb{H}_{\mc{Y}_{\Delta^*}}^i$ is a local system.
The associated vector bundle \[\mc{H}_{\mc{Y}_{\Delta^*}}^i:=\mb{H}_{\mc{Y}_{\Delta^*}}^i \otimes_{\mb{Z}} \mo_{\Delta^*}\]
 is called the \emph{Hodge bundle}. 
 There exist sub-bundles $F^p\mc{H}_{\mc{Y}_{\Delta^*}}^i \subset \mc{H}_{\mc{Y}_{\Delta^*}}^i$
defined by the condition: for any $t \in \Delta^*$, the fibers \[\left(F^p\mc{H}_{\mc{Y}_{\Delta^*}}^i\right)_t \subset \left(\mc{H}_{\mc{Y}_{\Delta^*}}^i\right)_t\]
can be identified respectively with 
$F^pH^i(\mc{Y}_t,\mb{C}) \subset H^i(\mc{Y}_t,\mb{C})$,
 where $F^p$ denotes the Hodge filtration (see \cite[\S $10.2.1$]{v4}).
 Using the Hodge bundle $\mc{H}_{\mc{Y}_{\Delta^*}}^{2i-1}$ and the sub-bundle $F^i\mc{H}_{\mc{Y}_{\Delta^*}}^{2i-1}$
 one can show that there exists a holomorphic family of principally polarized abelian varieties 
 \begin{equation}\label{ntor04}
  \widetilde{\rho}:\mbf{J}^i_{\Delta^*} \to \Delta^*
 \end{equation}
 such that  $\widetilde{\rho}^{-1}(s)=J^i(\mc{Y}_s)$ for every $s \in \Delta^*$.
 
 \subsection{Limit mixed Hodge structures}
 Consider the universal cover $\mf{h} \to \Delta^*$ of the punctured unit disc. 
 Denote by $e:\mf{h} \to \Delta^* \xrightarrow{j} \Delta$ the composed morphism and  
 $\mc{Y}_\infty:=\mc{Y} \times_{\Delta} \mf{h}$ the base change of the family $\mc{Y}$ over $\Delta$ to $\mf{h}$, by the morphism $e$.
 There exists an unique  \emph{canonical extension} $\ov{\mc{H}}_{\mc{Y}}^i$, extending  
 ${\mc{H}}_{\mc{Y}_{\Delta^*}}^i$ to the entire disc $\Delta$ (see \cite[Definition $11.4$]{pet} for the precise definition of 
 canonical extension).
 One can observe that $\ov{\mc{H}}_{\mc{Y}}^i$ is locally-free over $\Delta$.
 There is an explicit identification of the central fiber of the canonical extension $\ov{\mc{H}}_{\mc{Y}}^i$ 
 and the cohomology group $H^i(\mc{Y}_{\infty},\mb{C})$, depending on the choice of the parameter $t$ on $\Delta$ (see \cite[XI-$8$]{pet}):
 \begin{equation}\label{tor23}
  g^i_{_t}:H^i(\mc{Y}_{\infty},\mb{C}) \xrightarrow{\sim} \left(\ov{\mc{H}}_{\mc{Y}}^i\right)_0.
 \end{equation}
 Denote by $j:\Delta^* \to \Delta$
 the inclusion morphism.
 Note that
 $F^p\ov{\mc{H}}_{\mc{Y}}^i:= j_*\left(F^p\mc{H}_{\mc{Y}_{\Delta^*}}^i\right) \cap  \ov{\mc{H}}_{\mc{Y}}^i$
  is the \emph{unique largest} locally-free
 sub-sheaf of $\ov{\mc{H}}_{\mc{Y}}^i$ which extends $F^p\mc{H}_{\mc{Y}_{\Delta^*}}^i$.
 Denote by \[F^pH^i(\mc{Y}_{\infty},\mb{C}):=(g_{_t}^i)^{-1}\left(F^p\ov{\mc{H}}_{\mc{Y}}^i\right)_0.\] 
 Note that $F^\bullet$ does not always induce a pure Hodge structure 
 on $H^i(\mc{Y}_{\infty},\mb{C})$. However, we will observe that there is a mixed 
 Hodge structure on $H^i(\mc{Y}_{\infty},\mb{C})$ with good specialization properties.
 For this purpose, we first recall the monodromy transformation.
 
 For the rest of the section we assume that the central fiber of the family $\rho$ is a 
 reduced simple normal crossings divisor.
 For any $s \in \Delta^*$ and $i \ge 0$, denote by 
\[T_{s,i}: H^i(\mc{Y}_s,\mb{Z}) \to H^i(\mc{Y}_s,\mb{Z}) \, \mbox{ and }\, T_{s,i}^{\mb{Q}}: H^i(\mc{Y}_s,\mb{Q}) \to H^i(\mc{Y}_s,\mb{Q})\]
the \emph{local monodromy transformations}
   associated to the local system $\mb{H}_{\mc{Y}_{\Delta^*}}^i$ defined by parallel transport along a counterclockwise loop about $0 \in \Delta$
   (see \cite[\S $11.1.1$]{pet} or \cite[\S $3.1.1$]{v5}).
 By \cite[Theorem II.$1.17$]{deli2}  (see also \cite[Proposition I.$7.8.1$]{kuli}) the automorphism extends to a $\mb{Q}$-automorphism 
\begin{equation}\label{int01}
 T_i: H^i(\mc{Y}_{\infty},\mb{Q}) \to H^i(\mc{Y}_{\infty},\mb{Q}).
\end{equation}
Denote by $T_{i,\mb{C}}$ the induced automorphism on $H^i(\mc{Y}_{\infty},\mb{C})$. Denote by $N_{i,\mb{C}}:=\log(T_{i, \mb{C}})$. 
We now recall the following useful result in limit mixed Hodge structures:

 \begin{thm}\label{tor25}
  There exists an unique increasing \emph{monodromy weight filtration} $W_\bullet$ on $H^i(\mc{Y}_\infty,\mb{Q})$ such that
 \begin{enumerate}
  \item  for $i \ge 2$, $N_i(W_jH^i(\mc{Y}_\infty,\mb{Q})) \subset W_{j-2}H^i(\mc{Y}_\infty,\mb{Q})$, where $N_i:=\log(T_i)$ for 
  $T_i$ as in \eqref{int01},
  \item the map $N_i^l: \mr{Gr}^W_{i+l} H^i(\mc{Y}_\infty,\mb{Q}) \to \mr{Gr}^W_{i-l} H^i(\mc{Y}_\infty,\mb{Q})$ 
  is an isomorphism for all $l \ge 0$.
   \end{enumerate}
  The triple $(H^i(\mc{Y}_{\infty},\mb{Z}),W_\bullet,F^\bullet)$ then defines a mixed Hodge structure on $H^i(\mc{Y}_{\infty},\mb{Z})$, called the \emph{limit mixed Hodge structure}.
 \end{thm}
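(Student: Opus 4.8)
The plan is to treat the two assertions of the theorem separately: the purely linear-algebraic construction of the filtration $W_\bullet$, and the considerably deeper statement that $(W_\bullet, F^\bullet)$ together define a mixed Hodge structure. First I would record that, because the central fiber of $\rho$ is a \emph{reduced} simple normal crossings divisor, the degeneration is semistable, so by the local monodromy theorem (the quasi-unipotence result \cite[Theorem II.$1.17$]{deli2} together with reducedness) the operator $T_{i,\mb{C}}$ is in fact unipotent. Hence $T_{i,\mb{C}} - \Id$ is nilpotent and $N_{i,\mb{C}} = \log(T_{i,\mb{C}}) = \sum_{k \ge 1}(-1)^{k-1}(T_{i,\mb{C}} - \Id)^k/k$ is a finite sum defining a nilpotent endomorphism; since $T_i$ is defined over $\mb{Q}$, so is $N_i = \log(T_i)$.

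For the existence and uniqueness of $W_\bullet$ I would invoke the standard linear-algebra lemma on the weight filtration of a nilpotent operator: given a nilpotent endomorphism $N$ of a finite-dimensional vector space $V$ and a centering integer $i$, there is a \emph{unique} increasing filtration $W_\bullet$ with $N W_j \subset W_{j-2}$ and $N^l:\mr{Gr}^W_{i+l} \xrightarrow{\sim} \mr{Gr}^W_{i-l}$ for all $l \ge 0$ (see \cite[Chapter $11$]{pet}). Its proof is a clean induction on the index of nilpotency, or equivalently follows from the Lefschetz decomposition attached to the $\mf{sl}_2$-triple containing $N$. Applying this to $V = H^i(\mc{Y}_\infty,\mb{Q})$ and $N = N_i$, centered at $i$, produces $W_\bullet$ together with property $(1)$, while property $(2)$ is exactly the hard-Lefschetz isomorphism built into the lemma, and uniqueness is automatic from the same statement. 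I would also note that $W_\bullet$ is defined over $\mb{Q}$ and, after clearing denominators, compatible with the integral lattice $H^i(\mc{Y}_\infty,\mb{Z})$.

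The real content lies in the final claim: that $F^\bullet$ induces a pure Hodge structure of weight $k$ on each graded piece $\mr{Gr}^W_k H^i(\mc{Y}_\infty,\mb{C})$. Here I would follow the geometric route of Steenbrink, as it is the one compatible with the computations carried out later in the paper. One resolves the relative logarithmic de Rham complex by a weight-filtered double complex whose hypercohomology computes $H^i(\mc{Y}_\infty,\mb{C})$ and whose induced weight spectral sequence degenerates at $E_2$. The $E_1$-terms are cohomology groups of the intersections of the components of the central fiber, which are smooth and projective and hence carry pure Hodge structures, and the $E_2$-degeneration identifies $\mr{Gr}^W_k$ with a subquotient that is pure of weight $k$; simultaneously one checks that $N_i$ is realized as the morphism of type $(-1,-1)$ induced on this complex, reconciling the Hodge-theoretic weight filtration with the monodromy weight filtration of the previous step.

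The main obstacle is precisely this $E_2$-degeneration together with the matching of the two weight filtrations: establishing it requires the full strength of Steenbrink's construction (equivalently, in the abstract setting, of Schmid's nilpotent-orbit and $\mathrm{SL}_2$-orbit theorems). I would therefore present this step by exhibiting the Steenbrink complex explicitly and then appealing to the degeneration and purity statements as developed in \cite[Chapter $11$]{pet}, rather than reproving them; the remaining verification is the formal check that the filtrations so obtained coincide with $F^\bullet$ and $W_\bullet$ defined via the canonical extension \eqref{tor23} and via $N_i$, which is routine once the dictionary between the two constructions is in place.
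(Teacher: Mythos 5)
Your proposal is correct and follows essentially the same route as the paper, which proves this statement simply by citing \cite[Lemma-Definition $11.9$]{pet} and Schmid's theorem \cite[Theorem $6.16$]{schvar}; your sketch merely unpacks what those references contain (unipotence from the reduced SNC fiber, the linear-algebra lemma for the weight filtration of a nilpotent operator, and Steenbrink/Schmid for the purity of the graded pieces), appealing to the same sources for the hard steps.
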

 
\begin{proof}
 See \cite[Lemma-Definition $11.9$]{pet} and \cite[Theorem $6.16$]{schvar}.
\end{proof}

Recall, for any $s \in \Delta^*$, there is a natural specialization morphism 
from $\mc{Y}_s$ to the central fiber $\mc{Y}_0$ of $\mc{Y}$. This induces a natural 
morphism from $H^i(\mc{Y}_0, \mb{Z})$ to $H^i(\mc{Y}_s, \mb{Z})$, which is not a morphism 
of Hodge structures. However, after identifying $H^i(\mc{Y}_s, \mb{Z})$ with 
$H^i(\mc{Y}_{\infty},\mb{Z})$, the resulting \emph{specialization morphism}
\[\mr{sp}_i: H^i(\mc{Y}_0, \mb{Z}) \to H^i(\mc{Y}_{\infty},\mb{Z})\]
is a morphism of mixed Hodge structures, with the limit mixed Hodge structure on 
$H^i(\mc{Y}_{\infty},\mb{Z})$ and the mixed Hodge structure on $H^i(\mc{Y}_0, \mb{Z})$
as defined in \cite[Example $3.5$]{ste1}.
By the local invariant cycle theorem \cite[Theorem $11.43$]{pet}, we have the following exact sequence of mixed Hodge structure:
 \begin{equation}\label{t02}
  H^i(\mc{Y}_0,\mb{Q}) \xrightarrow{\mr{sp}_i} H^i(\mc{Y}_{\infty},\mb{Q}) \xrightarrow{N_i/(2\pi \sqrt{-1})} H^i(\mc{Y}_{\infty},\mb{Q})(-1).
 \end{equation}
 
 We now recall the following useful computation of limit mixed Hodge structures:
\begin{prop}\label{ntor03}
  Suppose that the central fiber $\mc{Y}_0$ is a reduced, simple normal crossings divisor 
  consisting of two smooth, irreducible components, say $Y_1, Y_2$. 
  Then, we have the following exact sequence of mixed Hodge structures:
  \begin{equation}\label{ntor02}
H^{i-2}(Y_1 \cap Y_2, \mb{Q})(-1) \xrightarrow{f_i} H^i(\mc{Y}_0,\mb{Q}) \xrightarrow{\mr{sp}_i} H^i(\mc{Y}_\infty, \mb{Q}) \xrightarrow{g_i} \mr{Gr}^W_{i+1} H^i(\mc{Y}_\infty, \mb{Q}) \to 0,
  \end{equation}
where $f_i$ comes from the natural Gysin morphism, $\mr{sp}_i$ is the specialization morphism 
and $g_i$ is the natural projection.
 \end{prop}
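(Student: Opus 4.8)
The plan is to split the claim into exactness on the right (at $H^i(\mc{Y}_\infty,\mb{Q})$, together with surjectivity of $g_i$), which is a soft consequence of the local invariant cycle theorem \eqref{t02}, and exactness on the left (at $H^i(\mc{Y}_0,\mb{Q})$), which carries all the geometric content. Throughout write $Y_{12}:=Y_1\cap Y_2$.

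First I would record a constraint on the weight filtration coming from the hypothesis of exactly two components. Since at most two local branches of $\mc{Y}_0$ pass through any point, the monodromy $T_i$ is $2$-step unipotent, i.e. $(T_i-\Id)^2=0$, whence $N_i^2=0$. Feeding this into part (2) of Theorem \ref{tor25}---the isomorphisms $N_i^\ell:\mr{Gr}^W_{i+\ell}\xrightarrow{\sim}\mr{Gr}^W_{i-\ell}$---forces $\mr{Gr}^W_{i+\ell}H^i(\mc{Y}_\infty,\mb{Q})=0$ for all $\ell\ge 2$. Hence the weights of $H^i(\mc{Y}_\infty,\mb{Q})$ lie in $\{i-1,i,i+1\}$, one has $W_{i+1}=H^i(\mc{Y}_\infty,\mb{Q})$ and $W_{i-2}=0$, and since $N_i$ is injective on $\mr{Gr}^W_{i+1}$ (the case $\ell=1$) while $N_i(W_i)\subseteq W_{i-2}=0$, we get $\ker N_i=W_iH^i(\mc{Y}_\infty,\mb{Q})$. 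The right-hand exactness now follows formally: by \eqref{t02}, $\Ima(\mr{sp}_i)=\ker(N_i)=W_i$, and since $g_i$ is the projection onto $H^i(\mc{Y}_\infty,\mb{Q})/W_i=\mr{Gr}^W_{i+1}H^i(\mc{Y}_\infty,\mb{Q})$, it is surjective with kernel $W_i=\Ima(\mr{sp}_i)$.

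It remains to prove $\ker(\mr{sp}_i)=\Ima(f_i)$. For the inclusion $\Ima(f_i)\subseteq\ker(\mr{sp}_i)$ I would argue geometrically: the Gysin classes produced by $Y_{12}$ lift to the total space as cohomology classes supported on the central fibre $\mc{Y}_0$, and hence restrict to zero on any smooth fibre $\mc{Y}_s$ (which is disjoint from $\mc{Y}_0$); under the identification $H^i(\mc{Y})\cong H^i(\mc{Y}_0)$ this says exactly $\mr{sp}_i\circ f_i=0$. For the reverse inclusion I would work on weight-graded pieces, using that the morphism of mixed Hodge structures $\mr{sp}_i$ is strict for $W_\bullet$. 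Recall that $H^i(\mc{Y}_0,\mb{Q})$ has weights in $\{i-1,i\}$, computed by the Mayer--Vietoris spectral sequence of $\mc{Y}_0=Y_1\cup Y_2$:
\begin{gather*}
\mr{Gr}^W_iH^i(\mc{Y}_0,\mb{Q})=\ker\!\big(H^i(Y_1)\oplus H^i(Y_2)\to H^i(Y_{12})\big),\\
\mr{Gr}^W_{i-1}H^i(\mc{Y}_0,\mb{Q})=\mr{coker}\!\big(H^{i-1}(Y_1)\oplus H^{i-1}(Y_2)\to H^{i-1}(Y_{12})\big),
\end{gather*}
while $\mr{Gr}^W_{i-1}$ and $\mr{Gr}^W_iH^i(\mc{Y}_\infty,\mb{Q})$ are read off from the Steenbrink weight spectral sequence, which degenerates at $E_2$ and whose $d_1$-differentials are alternating sums of restriction and Gysin maps among the $H^\bullet(Y_j)$ and $H^\bullet(Y_{12})$. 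I would then establish: (a) $\mr{Gr}^W_{i-1}(\mr{sp}_i)$ is injective, so that $\ker(\mr{sp}_i)$ is pure of weight $i$; and (b) on weight $i$ the kernel of $\mr{Gr}^W_i(\mr{sp}_i)$ equals the image of $f_i$. Since $\Ima(f_i)$ is already pure of weight $i$ (being the image of the pure Hodge structure $H^{i-2}(Y_{12})(-1)$), (a) and (b) together give $\ker(\mr{sp}_i)=\Ima(f_i)$.

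The main obstacle is exactly steps (a) and (b): identifying $\ker(\mr{sp}_i)$ with the Gysin image requires a careful term-by-term comparison of the Steenbrink weight spectral sequence computing $\mr{Gr}^W_\bullet H^i(\mc{Y}_\infty,\mb{Q})$ with the Mayer--Vietoris spectral sequence computing $H^i(\mc{Y}_0,\mb{Q})$, tracking how $\mr{sp}_i$ is induced on associated graded and how the $d_1$-differentials interchange restriction and Gysin contributions of $Y_{12}$; proving the injectivity in (a) is the most delicate point. An alternative route to the same inclusion is the Clemens--Schmid exact sequence, which gives $\ker(\mr{sp}_i)=\Ima\!\big(H_{2m+2-i}(\mc{Y}_0)\to H^i(\mc{Y}_0)\big)$ with $m=\dim\mc{Y}_0$, after which one identifies this homology term, via Poincar\'{e}--Lefschetz duality, with the Gysin contribution $H^{i-2}(Y_{12})(-1)$ of the double locus.
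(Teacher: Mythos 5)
The paper does not actually prove this proposition in-text: the stated ``proof'' is a citation to the authors' companion paper \cite[Corollary $2.4$]{mumf}, and the argument there is precisely the spectral-sequence comparison you outline. Your reduction is the right one. Right-exactness does follow from the local invariant cycle theorem once one knows $\ker N_i=W_iH^i(\mc{Y}_\infty,\mb{Q})$, and that in turn follows from the two-component hypothesis: since $Y^{(m)}=\emptyset$ for $m\ge 3$, the Steenbrink weight spectral sequence places all weights of $H^i(\mc{Y}_\infty,\mb{Q})$ in $\{i-1,i,i+1\}$, which gives both $N_i^2=0$ and, via Theorem \ref{tor25}(2), the identification $\ker N_i=W_i$. (I would derive $N_i^2=0$ from the weight bound rather than assert it directly from ``two local branches,'' but the fact is standard either way; note the paper itself later deduces $N^2=0$ \emph{from} this proposition, so one must take the spectral-sequence route as the independent input.) Left-exactness via strictness of $\mr{sp}_i$ plus purity of $\Ima(f_i)$ is also the correct mechanism.

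The one place I would push back is your assessment that steps (a) and (b) are ``the most delicate point.'' Writing $Y_{12}:=Y_1\cap Y_2$, both graded pieces in weight $i-1$ are literally the same group, namely $\mr{coker}\left(H^{i-1}(Y_1,\mb{Q})\oplus H^{i-1}(Y_2,\mb{Q})\to H^{i-1}(Y_{12},\mb{Q})\right)$ (from Mayer--Vietoris on one side and the $E_2$-page of the weight spectral sequence on the other), and $\mr{Gr}^W_{i-1}(\mr{sp}_i)$ is the identity on it; so (a) is an isomorphism, not merely an injection. In weight $i$, $\mr{Gr}^W_iH^i(\mc{Y}_0,\mb{Q})$ is the kernel of the restriction map $H^i(Y_1,\mb{Q})\oplus H^i(Y_2,\mb{Q})\to H^i(Y_{12},\mb{Q})$, while $\mr{Gr}^W_iH^i(\mc{Y}_\infty,\mb{Q})$ is that same kernel modulo the image of the Gysin map from $H^{i-2}(Y_{12},\mb{Q})(-1)$, and $\mr{Gr}^W_i(\mr{sp}_i)$ is the quotient projection; this is exactly (b). Combined with the purity of $\Ima(f_i)$ (image of a weight-$i$ pure Hodge structure under a morphism of mixed Hodge structures) and strictness, this closes the argument as you describe. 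So: your proposal is correct and follows essentially the same route as the cited source; the two deferred steps are routine once the $E_1$-pages are written out, rather than genuinely delicate.
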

 
 \begin{proof}
  See \cite[Corollary $2.4$]{mumf} for a proof.
 \end{proof}
 
 One of the important applications of Proposition \ref{ntor03} is the following limit mixed Hodge structure computation 
 associated to a degenerating family of curves. 

 \begin{thm}\label{ntor11}
 Let $g \ge 2$ be an integer, $\rho:\widetilde{\mc{X}} \to \Delta$ be a flat family of projective curves with $\widetilde{\mc{X}}$ regular, 
  $\widetilde{\mc{X}}_t$ is smooth of genus $g$ for all $t \in \Delta^*$ and central fiber 
  $\widetilde{\mc{X}}_0=Y_1 \cup Y_2$ with $Y_1 \cong \mb{P}^1$, 
  $Y_2$ smooth, irreducible and intersecting $Y_1$ transversally at two points, say $y_1, y_2$. 
 Then, there exists a basis $e_1, e_2, ..., e_{2g}$ of 
  $H^1(\widetilde{\mc{X}}_\infty, \mb{Z})$ such that 
  \begin{enumerate}
   \item $e_g$ (resp. $e_{2g}$) generates $\mr{Gr}^W_0H^1(\widetilde{\mc{X}}_\infty, \mb{Q})$ (resp. $\mr{Gr}^W_2H^1(\widetilde{\mc{X}}_\infty, \mb{Q})$),
   \item $e_1, e_2, ..., e_{g-1}, e_{g+1}, e_{g+2},..., e_{2g-1}$ form a basis of $\mr{Gr}^W_1H^1(\widetilde{\mc{X}}_\infty, \mb{Q})$,
   \item $e_i \cup e_{j} \not= 0$ if and only if $|j-i|= g$.
  \end{enumerate}
 \end{thm}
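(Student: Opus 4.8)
The plan is to read off the weight-graded pieces of the limit mixed Hodge structure on $H^1(\widetilde{\mc{X}}_\infty,\mb{Z})$ from Proposition \ref{ntor03}, and then to produce the claimed basis as a symplectic basis of the intersection form adapted to the monodromy weight filtration $W_\bullet$.

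First I would record the geometry. Since $\rho$ is flat with smooth genus $g$ generic fibre, $p_a(\widetilde{\mc{X}}_0)=g$; as $Y_1\cong\mb{P}^1$ and $Y_1\cap Y_2=\{y_1,y_2\}$, the formula $p_a(\widetilde{\mc{X}}_0)=g(Y_1)+g(Y_2)+\#(Y_1\cap Y_2)-1$ forces $Y_2$ to have genus $g-1$. Applying Proposition \ref{ntor03} with $i=1$, and using $H^{-1}(Y_1\cap Y_2,\mb{Q})=0$, gives the short exact sequence of mixed Hodge structures
\[0\to H^1(\widetilde{\mc{X}}_0,\mb{Q})\xrightarrow{\mr{sp}_1} H^1(\widetilde{\mc{X}}_\infty,\mb{Q})\xrightarrow{g_1}\mr{Gr}^W_2 H^1(\widetilde{\mc{X}}_\infty,\mb{Q})\to 0,\]
so that $\mr{sp}_1$ is injective with image $W_1H^1(\widetilde{\mc{X}}_\infty,\mb{Q})$. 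Counting dimensions ($\dim H^1(\widetilde{\mc{X}}_0,\mb{Q})=2(g-1)+1=2g-1$ against $\dim H^1(\widetilde{\mc{X}}_\infty,\mb{Q})=2g$) yields $\dim\mr{Gr}^W_2=1$; by the isomorphism $N_1\colon\mr{Gr}^W_2\xrightarrow{\sim}\mr{Gr}^W_0$ of Theorem \ref{tor25}(2) we get $\dim\mr{Gr}^W_0=1$, whence $\dim\mr{Gr}^W_1=2g-2$. Since $\mr{sp}_1$ is a morphism of mixed Hodge structures and the weight filtration on $H^1$ of the nodal curve $\widetilde{\mc{X}}_0$ has $\mr{Gr}^W_1$ equal to $H^1$ of its normalization, I obtain a canonical isomorphism $\mr{Gr}^W_1 H^1(\widetilde{\mc{X}}_\infty,\mb{Q})\cong H^1(Y_2,\mb{Q})$ carrying the intersection form of $Y_2$.

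Next I would use that, since $\mf{h}$ is contractible, $H^1(\widetilde{\mc{X}}_\infty,\mb{Z})$ is identified with $H^1$ of a smooth genus $g$ fibre, so the cup product is the standard unimodular symplectic form. Because this pairing is a morphism of mixed Hodge structures into $\mb{Q}(-1)$ (pure of weight $2$), it carries $W_1(H^1\otimes H^1)$ into $W_1\mb{Q}(-1)=0$; hence $\langle W_0,W_1\rangle=0$, and comparing dimensions gives $W_0^{\perp}=W_1$. Thus the form induces a perfect pairing $\mr{Gr}^W_0\times\mr{Gr}^W_2\to\mb{Q}$ and the (non-degenerate) symplectic form of $H^1(Y_2)$ on $\mr{Gr}^W_1$. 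I would then build the basis: choose a primitive generator $e_g$ of $W_0\cap H^1(\widetilde{\mc{X}}_\infty,\mb{Z})$, lift a symplectic basis $\bar e_1,\dots,\bar e_{g-1},\bar e_{g+1},\dots,\bar e_{2g-1}$ of $\mr{Gr}^W_1\cong H^1(Y_2,\mb{Z})$ (normalised so that $\bar e_i\cup\bar e_{i+g}\neq 0$ for $1\le i\le g-1$ and all other products among them vanish) to integral classes $e_1,\dots,e_{2g-1}\in W_1$, and finally pick $e_{2g}$ mapping to a generator of $\mr{Gr}^W_2$ with $e_g\cup e_{2g}\neq 0$. For weight-one $e_i$ the products $e_g\cup e_i$ lie in $W_1\mb{Q}(-1)=0$ automatically, and $e_i\cup e_j$ depends only on the classes in $\mr{Gr}^W_1$ (since changing a lift by a multiple of $e_g$ alters the product by $e_g\cup e_j\in W_1\mb{Q}(-1)=0$), reproducing the symplectic pattern. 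The only terms not forced by weight considerations are $e_i\cup e_{2g}$ for $1\le i\le g-1$; these I would annihilate by replacing $e_{2g}$ with $e_{2g}+w$ for a suitable $w\in W_1$, which is possible precisely because the symplectic form on $\mr{Gr}^W_1$ is non-degenerate, while $e_g\cup e_{2g}$ is unchanged as $e_g\cup w\in W_1\mb{Q}(-1)=0$.

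The main obstacle is the interplay between the weight filtration and the integral structure: I must check that the graded pieces $\mr{Gr}^{W}_j$ are realised by saturated sublattices (automatic, since each $W_j$ is a rational subspace, hence primitive in $H^1(\widetilde{\mc{X}}_\infty,\mb{Z})$) and that the adapted symplectic basis can be taken integral so that $e_1,\dots,e_{2g}$ spans the full lattice. Because the statement asks only that the cup products be nonzero, not equal to $\pm1$, this integral adjustment is robust; the essential inputs are the self-duality $W_0^{\perp}=W_1$ of the weight filtration and the non-degeneracy of the induced form on $\mr{Gr}^W_1$, both of which come from the polarized limit mixed Hodge structure of Theorem \ref{tor25}.
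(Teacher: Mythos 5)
Your argument is essentially correct, and it is worth pointing out that the paper itself gives no proof of Theorem \ref{ntor11}: it only cites \cite[Theorem $2.5$]{mumf}, so your derivation directly from Proposition \ref{ntor03} and Theorem \ref{tor25} is a genuinely self-contained alternative. The main steps all check out: $g(Y_2)=g-1$; the exact sequence of Proposition \ref{ntor03} for $i=1$ gives $\mr{sp}_1$ injective with image $W_1$, hence $\dim\mr{Gr}^W_2=1$, $\dim\mr{Gr}^W_0=1$ via $N_1\colon\mr{Gr}^W_2\xrightarrow{\sim}\mr{Gr}^W_0$, and $\dim\mr{Gr}^W_1=2g-2$; and since cup product is a morphism of mixed Hodge structures into $\mb{Q}(-1)$ (pure of weight $2$), one gets $\langle W_0,W_1\rangle=0$, $W_0^{\perp}=W_1$, $W_1^{\perp}=W_0$, and therefore a non-degenerate induced alternating form on $\mr{Gr}^W_1$ and a perfect pairing $\mr{Gr}^W_0\times\mr{Gr}^W_2\to\mb{Q}$. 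This yields (1), (2) and all of (3) except the vanishing of $e_i\cup e_{2g}$ for weight-one indices $i$.

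That last step is the one place you need to tighten. Non-degeneracy of the form on $\mr{Gr}^W_1$ over $\mb{Q}$ only produces a \emph{rational} correction $w$, whereas $e_{2g}+w$ must stay part of a $\mb{Z}$-basis of $H^1(\widetilde{\mc{X}}_\infty,\mb{Z})$; your closing remark that the statement ``only asks the products to be nonzero'' does not address this, because the products $e_i\cup e_{2g}$ with $i\neq g$ must be made \emph{exactly} zero. The repair is to use unimodularity of the cup product on the full lattice $L=H^1(\widetilde{\mc{X}}_\infty,\mb{Z})$ (it is the intersection form of a smooth genus $g$ fibre): since $e_g$ is primitive there exists $f\in L$ with $e_g\cup f=1$, whence $L=(\mb{Z}e_g\oplus\mb{Z}f)\oplus M$ with $M=\{e_g,f\}^{\perp}\cap L\subset W_1$ a unimodular symplectic lattice of rank $2g-2$ mapping isomorphically onto $(W_1\cap L)/(W_0\cap L)$. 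Taking $e_{2g}:=f$ and $e_1,\dots,e_{g-1},e_{g+1},\dots,e_{2g-1}$ a symplectic basis of $M$ gives all of (3) in one stroke, with no correction step needed (equivalently, unimodularity of the induced form on $M$ guarantees your $w$ can be chosen integral). Note also that the products to be killed are $e_i\cup e_{2g}$ for all $i\in\{1,\dots,g-1\}\cup\{g+1,\dots,2g-1\}$, not only $1\le i\le g-1$ as written.
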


 \begin{proof}
  See \cite[Theorem $2.5$]{mumf} for a proof.
 \end{proof}

\subsection{Existence of N\'{e}ron models}
Note that $\ker N_{i, \mb{C}}$ is a sub-Hodge structure of $H^i(\mc{Y}_\infty, \mb{Q})$. Suppose that 
$H^i(\mc{Y}_s,\mb{Z})$ is torsion-free. Denote by 
\begin{equation}\label{ntor05}
 F^n\ker N_{i, \mb{C}}:= \ker N_{i, \mb{C}} \cap F^n(H^i(\mc{Y}_{\infty},\mb{C})) \mbox{ and } 
 G_{s,i}:=\frac{((T_{s,i}^{\mb{Q}}-\mr{Id})H^i(\mc{Y}_s,\mb{Q})) \cap H^i(\mc{Y}_s,\mb{Z})}{(T_{s,i}-\mr{Id})H^i(\mc{Y}_s,\mb{Z})}.
\end{equation}
 Note that as a group, $G_{s,i}$ does not depend on the choice of $s \in \Delta^*$, so we will denote this by $G_i$.
 Using the explicit description of $g^i_{_t}$ as in \cite[XI-$6$]{pet}, one can check that 
 \[\ker(T_i- \mr{Id}) \cap H^i(\mc{Y}_{\infty},\mb{Z}) \xrightarrow[\sim]{g^i_{_t}} \left(\ov{\mb{H}}^i_{\mc{Y}} \right)_0 \mbox{ where } 
  \ov{\mb{H}}^i_{\mc{Y}}:=j_*\mb{H}^i_{\mc{Y}_{\Delta^*}} \mbox{ and } \left(\ov{\mb{H}}^i_{\mc{Y}} \right)_0:=\ov{\mb{H}}^i_{\mc{Y}} \otimes k(o).\]
 Since $\ker(N_i)=\ker(T_i-\mr{Id})$, this implies  $\left(\ov{\mb{H}}^i_{\mc{Y}} \right)_0 \subset \ker(N_{i, \mb{C}})$. Denote by 
 \begin{equation}\label{ntor13}
 J'_m:=\frac{\ker N_{2m-1, \mb{C}}}{F^m \ker N_{2m-1}+\left(\ov{\mb{H}}^{2m-1}_{\mc{Y}} \right)_0}=\frac{\ker N_{2m-1, \mb{C}}}{F^m \ker N_{2m-1}+(\ker(T_{2m-1}- \mr{Id}) \cap H^{2m-1}(\mc{Y}_{\infty},\mb{Z}))}.
 \end{equation}
  There exists a N\'{e}ron model associated to the family of intermediate Jacobians $\mbf{J}^i_{\Delta^*}$ in the following sense:
 
 \begin{thm}\label{ntor08}
  There exists a canonical analytic fiber space, called the \emph{N\'{e}ron model} of $\widetilde{\rho}$,
  \[\ov{\rho}:\ov{\mbf{J}}^i_{\mc{Y}} \to \Delta\]
  extending $\widetilde{\rho}$ such that every holomorphic section of $\widetilde{\rho}$ extends to a holomorphic section of $\ov{\rho}$. In particular,
  for all $s \in \Delta^*$, the fiber $\ov{\rho}^{-1}(s)=\widetilde{\rho}^{-1}(s)=J^i(\mc{Y}_s)$
  and the central fiber of $\ov{\rho}$ sits in the following short exact sequence:
  \begin{equation}\label{ntor09}
   0 \to J'_i \to \left(\ov{\mbf{J}}^i_{\mc{Y}} \right)_0 \to G_{2i-1} \to 0.\
  \end{equation}
 \end{thm}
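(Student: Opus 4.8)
The plan is to realize $\widetilde{\rho}$ as the family of intermediate Jacobians attached to a polarized variation of Hodge structure with unipotent monodromy, and then to quote the Green--Griffiths--Kerr construction of the Néron model in that setting, matching its explicit central fiber to the short exact sequence \eqref{ntor09}.

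First I would record that $\mb{H}^{2i-1}_{\mc{Y}_{\Delta^*}}=R^{2i-1}\rho'_*\mb{Z}$, together with the Hodge sub-bundles $F^p\mc{H}^{2i-1}_{\mc{Y}_{\Delta^*}}$ and the polarization induced by the cup-product pairing on the fibers, is a polarized variation of Hodge structure of weight $2i-1$ on $\Delta^*$, and that the family $\widetilde{\rho}:\mbf{J}^i_{\Delta^*}\to\Delta^*$ of \eqref{ntor04} is exactly $\mc{H}^{2i-1}_{\mc{Y}_{\Delta^*}}/(F^i\mc{H}^{2i-1}_{\mc{Y}_{\Delta^*}}+\mb{H}^{2i-1}_{\mc{Y}_{\Delta^*}})$, so that its fiber over $s$ is $J^i(\mc{Y}_s)$ in the sense of \eqref{ner18}. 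Since $\mc{Y}_0$ is a \emph{reduced} simple normal crossings divisor, the degeneration is semistable and the local monodromy $T_{s,2i-1}$ is unipotent; hence $N_{2i-1}=\log T_{2i-1}$ is nilpotent and the limit mixed Hodge structure of Theorem \ref{tor25} is available. This unipotency is precisely what places us in the regime where the GGK Néron model exists as a Hausdorff analytic fiber space rather than as a merely set-theoretic quotient.

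Next I would invoke the construction of \cite{green} (see also \cite{sai3, brosn2}). For an odd-weight polarized variation of Hodge structure on $\Delta^*$ with unipotent monodromy, GGK produce an analytic space $\ov{\mbf{J}}^i_{\mc{Y}}\to\Delta$ restricting to $\mbf{J}^i_{\Delta^*}$ over $\Delta^*$ whose holomorphic sections over $\Delta$ are exactly the admissible normal functions. The Néron (extension) property then reduces to the statement that a normal function over $\Delta^*$ with unipotent monodromy is admissible in the sense of Saito--Steenbrink--Zucker and therefore extends across $0$; combined with the uniqueness of admissible extensions, this shows that every holomorphic section of $\widetilde{\rho}$ prolongs uniquely to a holomorphic section of $\ov{\rho}$, and it is responsible for the Hausdorffness of the total space.

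Finally I would identify the central fiber. By the GGK description, its identity component is the generalized intermediate Jacobian of the monodromy-invariant sub-Hodge structure $\ker N_{2i-1,\mb{C}}$ of the limit mixed Hodge structure, namely the quotient of $\ker N_{2i-1,\mb{C}}$ by $F^i\ker N_{2i-1}$ and by the lattice $\ker(T_{2i-1}-\Id)\cap H^{2i-1}(\mc{Y}_\infty,\mb{Z})=(\ov{\mb{H}}^{2i-1}_{\mc{Y}})_0$; by \eqref{ntor13} this is exactly $J'_i$. The group of connected components measures the failure of $(T_{2i-1}-\Id)H^{2i-1}(\mc{Y}_\infty,\mb{Z})$ to be saturated inside its $\mb{Q}$-span, which is precisely the finite group $G_{2i-1}$ of \eqref{ntor05}. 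Assembling these two identifications yields the sequence \eqref{ntor09}. I expect the main obstacle to be the verification of the Néron extension property together with Hausdorffness: both rest on the admissibility of the limiting behavior of normal functions, for which the unipotency guaranteed by the reduced SNC central fiber is indispensable and into which the deepest input from \cite{green} and Saito's theory enters.
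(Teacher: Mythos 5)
Your proposal is correct and follows the same route as the paper, which simply cites \cite[Theorem II.B.$9$]{green} for the entire statement; you have merely unpacked the hypotheses (unipotency from the reduced SNC central fiber) and matched the GGK description of the identity component and component group to $J'_i$ and $G_{2i-1}$, exactly as the definitions in \eqref{ntor13} and \eqref{ntor05} are set up to allow. No gap.
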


 \begin{proof}
  See \cite[Theorem II.B.$9$]{green} for proof of the statement.
 \end{proof}
 
 As mentioned in the introduction, it is natural to study the geometry of the ``pure weight part'' of the central fiber of 
 the N\'{e}ron model. We define this below:
 \begin{defi}\label{def:ner01}
  Since $N_{i, \mb{C}}$ is a morphism of mixed Hodge structures, $\ker N_{i, \mb{C}}$ is equipped with a natural mixed Hodge structure.
  The \emph{pure weight part of } $\left(\ov{\mbf{J}}^i_{\mc{Y}} \right)_0$ will be defined as 
  \[{pure}\left(\left(\ov{\mbf{J}}^i_{\mc{Y}} \right)_0\right):= \mr{coker}\left(W_{2i-2} \ker N_{2i-1, \mb{C}} \hookrightarrow
   \ker N_{2i-1, \mb{C}} \twoheadrightarrow J'_i \to \left(\ov{\mbf{J}}^i_{\mc{Y}} \right)_0\right).
  \]
   The reason for this terminology is that one can observe that ${pure}\left(\left(\ov{\mbf{J}}^i_{\mc{Y}} \right)_0\right)$ is isomorphic to the image 
 $\mr{Gr}^W_{2i-1} \ker N_{2i-1, \mb{C}}$ under the composition,
 \[\mr{Gr}^W_{2i-1} \ker N_{2i-1, \mb{C}} \hookrightarrow
   \ker N_{2i-1, \mb{C}} \twoheadrightarrow J'_i \to \left(\ov{\mbf{J}}^i_{\mc{Y}} \right)_0.\]
 \end{defi}

 \section{Monodromy action on local systems associated to moduli spaces}\label{nsec3}

  In this section we study the monodromy action on the local systems associated to families 
 of moduli spaces of semi-stable sheaves on projective curves. In the next section, we use this to describe the N\'{e}ron model 
 of the associated family of intermediate Jacobians (of moduli spaces).

 \begin{note}\label{tor33}
  Denote by $\pi_1:\mc{X} \to \Delta$ a family of projective curves of genus $g \ge 2$
  over the unit disc $\Delta$, smooth over 
  the punctured disc $\Delta^*$ and central fiber isomorphic to an irreducible nodal curve $X_0$ with exactly one node, say at $x_0$. 
  Assume further that $\mc{X}$ is regular. Let $\pi: \widetilde{X}_0 \to X_0$ be the normalization map.
  Fix an invertible sheaf $\mc{L}$ on $\mc{X}$ of relative odd degree, say $d$. 
  Set $\mc{L}_0:=\mc{L}|_{X_0}$, the restriction of $\mc{L}$ to the central fiber.
  Denote by $\widetilde{\mc{L}}_0:=\pi^*\mc{L}_0$.
  
  Denote by $\widetilde{\mc{X}}:=\mr{Bl}_{x_0}\mc{X}$ and by 
  \begin{equation}\label{eq:tor01}
   \widetilde{\pi}_1:\widetilde{\mc{X}} \to \mc{X} \xrightarrow{\pi_1} \Delta.
  \end{equation}
 Note that the central fiber of $\widetilde{\pi}_1$ is the union of two irreducible components, the normalization $\widetilde{X}_0$ of $X_0$ and 
  the exceptional divisor $F \cong \mb{P}^1_{x_0}$ intersecting $\widetilde{X}_0$ at the two points 
  over $x_0$.
  \end{note}

  \subsection{Relative Gieseker moduli space}\label{subsec1}
  Recall, for any smooth, projective curve $Y$ of genus $g$ at least $2$ and an 
  invertible sheaf $\mc{L}'$ on $Y$ of odd degree, there exists a non-singular (fine) moduli 
  space, denoted $M_Y(2,\mc{L}')$, parameterizing rank $2$ semi-stable sheaves 
  on $Y$ with determinant $\mc{L}'$ (see \cite{ind}, \cite{ink3} for basic definitions and results on moduli spaces of sheaves with fixed determinant).
   There exists a relative Gieseker moduli space, denoted $\mc{G}(2, \mc{L})$, parameterizing
   families of rank $2$, semi-stable sheaves defined over families of curves, semi-stably 
   equivalent to $\mc{X}$, with determinant $\mc{L}$. See \cite[\S $3$]{sun1} or \cite[\S 6]{tha} 
   for the precise definition. We omit the precise definition in this article as it is very technical.
   Instead, we recall the necessary properties of the moduli space. 
   
   Note that $\mc{G}(2,\mc{L})$ is regular and there exists a flat, projective morphism 
   \[\pi_2: \mc{G}(2, \mc{L}) \to \Delta\]
   such that:
   \begin{enumerate}
    \item for all $s \in \Delta^*$, $\mc{G}(2,\mc{L})_s:=\pi_2^{-1}(s)=M_{\mc{X}_s}(2,\mc{L}_s)$, where $\mc{L}_s:=\mc{L}|_{\mc{X}_s}$,
    \item the central fiber, denoted $\mc{G}_{X_0}(2,\mc{L}_0):=\pi_2^{-1}(0)$, 
    is a reduced simple normal crossings divisor of $\mc{G}(2,\mc{L})$, consisting of two 
    smooth, irreducible components, say $\mc{G}_0$ and $\mc{G}_1$ with $\mc{G}_1$ (resp. $\mc{G}_0 \cap \mc{G}_1$) is isomorphic to a $\p3$ (resp. $\mb{P}^1 \times \mb{P}^1$)-bundle over $M_{\widetilde{X}_0}(2,\widetilde{\mc{L}}_0)$.
    Moreover, there exists an $\ov{\mr{SL}}_2$-bundle $P_0$ over $M_{\widetilde{X}_0}(2,\widetilde{\mc{L}}_0)$ and closed subvarieties $Z \subset P_0$ and $Z' \subset \mc{G}_0$
    such that 
    \[Z' \cap (\mc{G}_0 \cap \mc{G}_1)= \emptyset \mbox{ and } \mc{G}_0 \backslash Z' \cong P_0 \backslash Z,\]
    where $\ov{\mr{SL}}_2$ is the \emph{wonderful compactification} of $\mr{SL}_2$ defined as 
    \[\ov{\mr{SL}}_2:= \{[M,\lambda] \in \mb{P}(\mr{End}(\mb{C}^2) \oplus \mb{C})| \det(M)=\lambda^2\}.\]
   \end{enumerate}
 See \cite[\S $6$]{tha} for a proof of the above statement (see also \cite[\S $5, 6$]{abe}) and \cite[Definition $3.3.1$]{pezz} for  the general definition of wonderful compactification. Also note that by \cite{K4} the moduli space $\mc{G}(2,\mc{L})_s$ is non-empty for any $s\in \Delta$.

  \subsection{Relative Mumford-Newstead isomorphism}
 Let us consider the relative version of the construction in \cite{mumn}.
  Denote by  \[\mc{W}:=\mc{X}_{\Delta^*} \times_{\Delta^*} \mc{G}(2,\mc{L})_{\Delta^*}\, \mbox{ and }\, \pi_3: \mc{W} \to \Delta^*\] the natural morphism. Recall, 
  $\mc{W}_t:=\pi_3^{-1}(t)=\mc{X}_t \times \mc{G}(2,\mc{L})_t \cong \mc{X}_t \times M_{\mc{X}_t}(2,\mc{L}_t)$, for all $t \in \Delta^*$.
   Using \cite[Theorem $9.1.1$]{pan}, one can check that there exists a (relative) universal bundle $\mc{U}$ over $\mc{W}$
  associated to the (relative) moduli space $\mc{G}(2,\mc{L})_{\Delta^*}$. In particular, 
  for each $t \in \Delta^*$,
  $\mc{U}|_{\mc{W}_t}$ is the universal bundle over $\mc{X}_t \times M_{\mc{X}_t}(2,\mc{L}_t)$ associated to the 
  fine moduli space $M_{\mc{X}_t}(2,\mc{L}_t)$
  (see \cite[Corollary $4.6.6$]{huy}).
  Denote by $\mb{H}^4_{\mc{W}}:=R^4 \pi_{3_*} \mb{Z}_{\mc{W}}$ the local system associated to $\mc{W}$.
  Using the K\"{u}nneth decomposition, we have (see \S \ref{nsec1} for notations)
  \begin{equation}\label{ntor06}
   \mb{H}^4_{\mc{W}}= \bigoplus\limits_i \left(\mb{H}^i_{{\mc{X}}_{\Delta^*}} \otimes \mb{H}^{4-i}_{\mc{G}(2,\mc{L})_{\Delta^*}}\right).
  \end{equation}
  By the Poincar\'{e} duality applied to the local system $\mb{H}^1_{{\mc{X}}_{\Delta^*}}$ (see \cite[\S I.$2.6$]{kuli}), we have
\begin{equation}\label{ner08}
 \mb{H}^1_{{\mc{X}}_{\Delta^*}} \otimes \mb{H}^{3}_{\mc{G}(2,\mc{L})_{\Delta^*}} \stackrel{\mr{PD}}{\cong}\left(\mb{H}^1_{{\mc{X}}_{\Delta^*}}\right)^\vee \otimes \mb{H}^{3}_{\mc{G}(2,\mc{L})_{\Delta^*}} \cong \Hc\left(\mb{H}^1_{{\mc{X}}_{\Delta^*}}, \mb{H}^{3}_{\mc{G}(2,\mc{L})_{\Delta^*}}\right).
\end{equation}
  Denote by $c_2(\mc{U})^{1,3} \in \Gamma\left(\mb{H}^1_{{\mc{X}}_{\Delta^*}} \otimes \mb{H}^{3}_{\mc{G}(2,\mc{L})_{\Delta^*}}\right)$ 
  the image of the second Chern class $c_2(\mc{U}) \in \Gamma(\mb{H}^4_{\mc{W}})$ under the natural projection 
  $\mb{H}^4_{\mc{W}} \to \mb{H}^1_{{\mc{X}}_{\Delta^*}} \otimes \mb{H}^{3}_{\mc{G}(2,\mc{L})_{\Delta^*}}$.
    Then, $c_2(\mc{U})^{1,3}$ induces a homomorphism 
    \[\Phi_{\Delta^*}: \mb{H}^1_{\mc{X}_{\Delta^*}} \to \mb{H}^{3}_{\mc{G}(2,\mc{L})_{\Delta^*}}.\]
  Denote by 
   \begin{equation}\label{ntor07}
    \widetilde{\Phi}_s: H^1(\widetilde{\mc{X}}_s, \mb{Z}) \xrightarrow{\sim} H^3(\mc{G}(2,\mc{L})_s, \mb{Z})
   \end{equation}
   the restriction of $\Phi_{\Delta^*}$ to the point $s \in \Delta^*$. Since $c_2(\mc{U})^{1,3}$ is a (single-valued) global section of 
  $\mb{H}^1_{\widetilde{\mc{X}}_{\Delta^*}} \otimes \mb{H}^{3}_{\mc{G}(2,\mc{L})_{\Delta^*}}$
  (see \cite[Proposition $10.1$]{fult}),
   we have 
   {\small \[\widetilde{\Phi}_s \in \mr{Hom}(H^1(\widetilde{\mc{X}}_s, \mb{Z}), H^3(\mc{G}(2,\mc{L})_s, \mb{Z})) \cong H^1(\widetilde{\mc{X}}_s, \mb{Z})^\vee \otimes H^3(\mc{G}(2,\mc{L})_s, \mb{Z}) \stackrel{\mr{P.D.}}{\cong}
           H^1(\widetilde{\mc{X}}_s, \mb{Z}) \otimes H^3(\mc{G}(2,\mc{L})_s, \mb{Z}) 
           \]}
is monodromy invariant i.e., for all $s \in \Delta^*$, the following diagram is commutative:
   \begin{equation}\label{ner03}
    \begin{diagram}
     H^1(\widetilde{\mc{X}}_s, \mb{Z}) &\rTo^{\widetilde{\Phi}_s}_{\sim}& H^3(\mc{G}(2,\mc{L})_s, \mb{Z}) \\
  \dTo^{T_{\widetilde{\mc{X}}_s}}_{\cong}&\circlearrowleft&\dTo_{T_{\mc{G}(2,\mc{L})_s}}^{\cong}\\
    H^1(\widetilde{\mc{X}}_s, \mb{Z}) &\rTo^{\widetilde{\Phi}_s}_{\sim}&H^3(\mc{G}(2,\mc{L})_s, \mb{Z})  
       \end{diagram}
   \end{equation}
   where $T_{\widetilde{\mc{X}}_s}$ and $T_{\mc{G}(2,\mc{L})_s}$ are the monodromy transformations on $H^1(\widetilde{\mc{X}}_s, \mb{Z})$
 and $H^3(\mc{G}(2,\mc{L})_s, \mb{Z})$, respectively.
  By \cite[Lemma $1$ and Proposition $1$]{mumn}, we conclude that the homomorphism $\Phi_{\Delta^*}$
  is an isomorphism such that the induced 
  isomorphism on the associated vector bundles:
  \[\Phi_{\Delta^*}:\mc{H}^1_{\mc{X}_{\Delta^*}} \xrightarrow{\sim} \mc{H}^{3}_{\mc{G}(2,\mc{L})_{\Delta^*}} \mbox{ satisfies } \Phi_{\Delta^*}(F^p\mc{H}^1_{\mc{X}_{\Delta^*}})= F^{p+1}\mc{H}^{3}_{\mc{G}(2,\mc{L})_{\Delta^*}} \mbox{ for all }p \ge 0.\]
 Therefore, the morphism $\Phi_{\Delta^*}$ induces an isomorphism:
 \begin{equation}\label{ntor10}
  \Phi': \mbf{J}^1_{{\mc{X}}_{\Delta^*}} \xrightarrow{\sim} \mbf{J}^2_{\mc{G}(2,\mc{L})_{\Delta^*}}
 \end{equation}

 \subsection{Limit Mumford-Newstead isomorphism}  
 The isomorphism $\Phi_{\Delta^*}$ can be extended to the entire disc $\Delta$ such that 
 the induced morphism on the central fibers is an isomorphism of limit mixed Hodge structures. 
 Let $\ov{\mc{H}}^1_{\mc{X}_{\Delta^*}}$ and $\ov{\mc{H}}^{3}_{\mc{G}(2,\mc{L})_{\Delta^*}}$ be the canonical extensions of $\mc{H}^1_{\mc{X}_{\Delta^*}}$
  and $\mc{H}^{3}_{\mc{G}(2,\mc{L})_{\Delta^*}}$, respectively. By the uniqueness of the canonical extension, 
  the morphism $\Phi_{\Delta^*}$ extends to the entire disc:
  \[\widetilde{\Phi}:\ov{\mc{H}}_{\widetilde{\mc{X}}}^1 \xrightarrow{\sim} \ov{\mc{H}}^3_{\mc{G}(2,\mc{L})}.\]
  Using the identification \eqref{tor23} and restricting $\widetilde{\Phi}$ to the central fiber, we have an isomorphism:
  \begin{equation}\label{ner11}
   \widetilde{\Phi}_0 : H^1(\widetilde{\mc{X}}_\infty,\mb{Q}) \xrightarrow{\sim} H^3(\mc{G}(2,\mc{L})_\infty,\mb{Q}).
  \end{equation}
   Recall that $\widetilde{\Phi}_0$ is an isomorphism of mixed Hodge structures: 
  \begin{thm}\label{ner01}
   For the extended morphism $\widetilde{\Phi}$, we have 
   $\widetilde{\Phi}(F^p\ov{\mc{H}}_{\widetilde{\mc{X}}}^1)= F^{p+1}\ov{\mc{H}}^3_{\mc{G}(2,\mc{L})}$  for  $p=0,1$  and  
   $\widetilde{\Phi}(\ov{\mb{H}}_{\widetilde{\mc{X}}}^1)=\ov{\mb{H}}^3_{\mc{G}(2,\mc{L})}$.
   Moreover, $\widetilde{\Phi}_0(W_iH^1(\widetilde{\mc{X}}_\infty,\mb{Q}))=W_{i+2}H^3(\mc{G}(2,\mc{L})_\infty,\mb{Q})$
   for all $i \ge 0$.
  \end{thm}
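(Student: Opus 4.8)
The plan is to establish the three assertions separately: the Hodge filtration and the integral lattice first, since both descend at once from the corresponding statements over $\Delta^*$, and the weight filtration last, as it is the only part that genuinely invokes the limit mixed Hodge structure. Before starting I would record what is already in hand over $\Delta^*$: the isomorphism $\Phi_{\Delta^*}$ is defined over $\mb{Z}$ (it is induced by the integral class $c_2(\mc{U})^{1,3}$), it is an isomorphism of local systems $\mb{H}^1_{\mc{X}_{\Delta^*}} \xrightarrow{\sim} \mb{H}^3_{\mc{G}(2,\mc{L})_{\Delta^*}}$, and on the Hodge bundles it satisfies $\Phi_{\Delta^*}(F^p\mc{H}^1_{\mc{X}_{\Delta^*}}) = F^{p+1}\mc{H}^3_{\mc{G}(2,\mc{L})_{\Delta^*}}$ for all $p \ge 0$. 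By construction and the uniqueness of the canonical extension, $\widetilde{\Phi}$ is an isomorphism of vector bundles on $\Delta$ restricting to $\Phi_{\Delta^*}$ over $\Delta^*$; in particular $\widetilde{\Phi}(\ov{\mc{H}}^1_{\widetilde{\mc{X}}}) = \ov{\mc{H}}^3_{\mc{G}(2,\mc{L})}$.

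For the Hodge filtration I would use the defining formula $F^p\ov{\mc{H}}^i_{\mc{Y}} = j_*\!\left(F^p\mc{H}^i_{\mc{Y}_{\Delta^*}}\right) \cap \ov{\mc{H}}^i_{\mc{Y}}$. Since $\widetilde{\Phi}$ extends $\Phi_{\Delta^*}$, functoriality of $j_*$ gives $\widetilde{\Phi}\!\left(j_*(F^p\mc{H}^1_{\mc{X}_{\Delta^*}})\right) = j_*(F^{p+1}\mc{H}^3_{\mc{G}(2,\mc{L})_{\Delta^*}})$, and because $\widetilde{\Phi}$ is a bijection of sheaves it preserves intersections; combining these with $\widetilde{\Phi}(\ov{\mc{H}}^1_{\widetilde{\mc{X}}}) = \ov{\mc{H}}^3_{\mc{G}(2,\mc{L})}$ yields $\widetilde{\Phi}(F^p\ov{\mc{H}}^1_{\widetilde{\mc{X}}}) = F^{p+1}\ov{\mc{H}}^3_{\mc{G}(2,\mc{L})}$ for $p = 0,1$. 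The same argument applied to the lattice $\ov{\mb{H}}^i_{\mc{Y}} = j_*\mb{H}^i_{\mc{Y}_{\Delta^*}}$, using that $\Phi_{\Delta^*}$ is an integral isomorphism, gives $\widetilde{\Phi}(\ov{\mb{H}}^1_{\widetilde{\mc{X}}}) = \ov{\mb{H}}^3_{\mc{G}(2,\mc{L})}$.

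The substantive part is the weight filtration, and the key input is the monodromy-invariance diagram \eqref{ner03}. Passing to the limit, the commuting of $\widetilde{\Phi}_s$ with the local monodromies $T_{\widetilde{\mc{X}}_s}$ and $T_{\mc{G}(2,\mc{L})_s}$ shows that $\widetilde{\Phi}_0$ intertwines the limit monodromy operators, $\widetilde{\Phi}_0 \circ T_1 = T_3 \circ \widetilde{\Phi}_0$, and hence, taking logarithms, $\widetilde{\Phi}_0 \circ N_1 = N_3 \circ \widetilde{\Phi}_0$. I would then invoke the uniqueness of the monodromy weight filtration from Theorem \ref{tor25}. Concretely, set $\widetilde{W}_i := \widetilde{\Phi}_0^{-1}\!\left(W_{i+2}H^3(\mc{G}(2,\mc{L})_\infty,\mb{Q})\right)$ and verify that $\widetilde{W}_\bullet$ satisfies the two characterizing properties of the monodromy weight filtration of $N_1$ on $H^1(\widetilde{\mc{X}}_\infty,\mb{Q})$: the relation $N_1(\widetilde{W}_j) \subset \widetilde{W}_{j-2}$ follows from $N_3(W_{j+2}) \subset W_j$ via the intertwining, and the isomorphism $N_1^l : \mr{Gr}^{\widetilde{W}}_{1+l} \xrightarrow{\sim} \mr{Gr}^{\widetilde{W}}_{1-l}$ is transported by $\widetilde{\Phi}_0$ from $N_3^l : \mr{Gr}^W_{3+l} \xrightarrow{\sim} \mr{Gr}^W_{3-l}$. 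By uniqueness $\widetilde{W}_\bullet = W_\bullet H^1(\widetilde{\mc{X}}_\infty,\mb{Q})$, which is precisely $\widetilde{\Phi}_0(W_iH^1) = W_{i+2}H^3$.

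I expect the main obstacle to be the bookkeeping of the shift in the weight filtration: the monodromy weight filtration on $H^1$ is centered at weight $1$ while that on $H^3$ is centered at weight $3$, so the intertwining isomorphism must carry $W_\bullet$ to $W_{\bullet+2}$, and one must check that the two conditions of Theorem \ref{tor25} — stated with the centering at the respective cohomological degrees — are matched correctly under this shift. Conceptually this is just the statement that $\widetilde{\Phi}_0$ realizes $H^3(\mc{G}(2,\mc{L})_\infty)$ as the Tate twist $H^1(\widetilde{\mc{X}}_\infty)(-1)$, so that the Hodge shift by $1$ and the weight shift by $2$ occur in tandem; the care lies in aligning the index shift with the precise normalization of Theorem \ref{tor25} rather than in any new computation.
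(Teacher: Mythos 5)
Your proof is correct: the Hodge-filtration and lattice statements follow from the definition of the canonical extension and the functoriality of $j_*$ exactly as you describe, and transporting the monodromy weight filtration via the intertwining $\widetilde{\Phi}_0 \circ N_1 = N_3 \circ \widetilde{\Phi}_0$ together with the uniqueness statement in Theorem \ref{tor25} (with the correct recentering from weight $1$ to weight $3$) is the standard argument. The paper itself gives no proof here but defers to \cite[Proposition $4.1$]{indpre}, whose argument follows essentially the same route.
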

  
 \begin{proof}
   See \cite[Proposition $4.1$]{indpre} for a proof of the statement.
 \end{proof}

 \subsection{Vanishing of the finite group $G_i$}
 Denote by \[\mb{H}^i_{\widetilde{\mc{X}}_{\Delta^*}}:=R^i (\widetilde{\pi}'_1)_* \mb{Z}\, \mbox{ and }\, \mb{H}^i_{\mc{G}(2,\mc{L})_{\Delta^*}}:=R^i (\pi'_2)_* \mb{Z},\]
 the local systems associated to the families $\widetilde{\pi}_1':\widetilde{\mc{X}}_{\Delta^*} \to \Delta^*$ and $\pi'_2:\mc{G}(2,\mc{L})_{\Delta^*} \to \Delta^*$
 which are the restrictions of $\widetilde{\pi}_1$ and $\pi_2$ to $\Delta^*$, respectively.
 For any $s \in \Delta^*$, denote by 
  \[T^i_{\mc{G}(2,\mc{L})_s}: H^i(\mc{G}(2,\mc{L})_s,\mb{Z}) \to  H^i(\mc{G}(2,\mc{L})_s,\mb{Z})\, \mbox{ and }\, 
   T^{\mb{Q},i}_{\mc{G}(2,\mc{L})_s}: H^i(\mc{G}(2,\mc{L})_s,\mb{Q}) \to  H^i(\mc{G}(2,\mc{L})_s,\mb{Q})\]
   the local monodromy transformation associated to the local system  $\mb{H}^i_{\mc{G}(2,\mc{L})_{\Delta^*}}$.
 Recall, by \cite[p. $10$]{atyang} that $H^i(\mc{G}(2,\mc{L})_s,\mb{Z})$ is torsion-free, for all $i \ge 0$.
 As in \eqref{ntor05}, denote by 
   \[G_i:=\frac{(T^{\mb{Q},i}_{\mc{G}(2,\mc{L})_s}-\mr{Id})H^i(\mc{G}(2,\mc{L})_s,\mb{Q}) \cap H^i(\mc{G}(2,\mc{L})_s,\mb{Z})}{(T^{i}_{\mc{G}(2,\mc{L})_s}-\mr{Id})H^i(\mc{G}(2,\mc{L})_s,\mb{Z})}.\]
  We show below that $G_i$ vanishes for all $i \ge 0$ (Theorem \ref{ntor14}). The idea of the proof 
  is to use the isomorphism $\widetilde{\Phi}_0$ as in \eqref{ner11}
  and combine it with Newstead's classical result (see \cite[Theorem 1]{new1})
  on generators of the cohomology ring of $\mc{G}(2,\mc{L})_s$ for $s \in \Delta^*$, to reduce 
  the problem to the Picard-Lefschetz formula associated to the family of curves $\widetilde{\pi}'_1$.
  
  \begin{thm}\label{ntor14}
   The group $G_i=0$ for all $i \ge 0$.
  \end{thm}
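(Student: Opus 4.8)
The plan is to identify the monodromy on $H^\ast(\mc{G}(2,\mc{L})_s,\mb{Z})$ with the Picard--Lefschetz transformation of the nearby curve through the Mumford--Newstead isomorphism, and then to read off $G_i$ from the multiplicative structure of the cohomology ring. Fix $s\in\Delta^\ast$ and abbreviate $T:=T^i_{\mc{G}(2,\mc{L})_s}$ and $N:=T-\mr{Id}$. By Newstead \cite[Theorem~$1$]{new1}, the ring $H^\ast(\mc{G}(2,\mc{L})_s,\mb{Z})=H^\ast(M_{\mc{X}_s}(2,\mc{L}_s),\mb{Z})$ is generated by a class $\alpha\in H^2$, a class $\beta\in H^4$, and $2g$ classes $\gamma_1,\dots,\gamma_{2g}\in H^3$; by \eqref{ntor07} the $\gamma_j$ are the images under $\widetilde{\Phi}_s$ of an integral basis of $H^1(\widetilde{\mc{X}}_s,\mb{Z})$. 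The classes $\alpha$ and $\beta$ arise from the K\"unneth components of $c_2(\mc{U})$ in $H^2(\widetilde{\mc{X}}_s)\otimes H^2$ and $H^0(\widetilde{\mc{X}}_s)\otimes H^4$; since the monodromy of $\widetilde{\pi}_1$ acts trivially on $H^0$ and $H^2$ of the curve, both $\alpha$ and $\beta$ are monodromy invariant. On $H^3=\langle\gamma_1,\dots,\gamma_{2g}\rangle$ the commutative diagram \eqref{ner03} identifies $T$ with the Picard--Lefschetz transformation on $H^1(\widetilde{\mc{X}}_s,\mb{Z})$, and as $\widetilde{\pi}_1$ has a single vanishing cycle $\delta$ (Theorem~\ref{ntor11}) this transformation has the form $x\mapsto x\pm\langle x,\delta\rangle\delta$. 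Choosing a symplectic basis containing $\delta$, we may arrange that $T\gamma_{j_0}=\gamma_{j_0}\pm d$ and $T\gamma_j=\gamma_j$ for $j\neq j_0$, where $d:=\widetilde{\Phi}_s(\delta)$ is a \emph{primitive} element of $H^3(\mc{G}(2,\mc{L})_s,\mb{Z})$, being the image of a primitive vanishing cycle under the integral isomorphism $\widetilde{\Phi}_s$.

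Next I would record the shape of $N$. Because $T$ is a ring automorphism fixing every generator except $\gamma_{j_0}$, and because $d^2=0$ and $\gamma_{j_0}^2=0$ (both being of odd degree in torsion-free cohomology), a direct expansion on monomials gives $(T-\mr{Id})^2=0$; thus $N=T-\mr{Id}$ is the degree-preserving derivation sending $\gamma_{j_0}\mapsto\pm d$ and all other generators to $0$. In particular
\[\mr{im}(N)=d\cdot A,\qquad A:=\text{the subring generated by }\alpha,\ \beta\text{ and the }\gamma_j\ (j\neq j_0),\]
and $A\subseteq\ker(N)$.

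Now I would unwind the definition of $G_i$. Since $\mr{im}_{\mb{Q}}(N)=\mr{im}_{\mb{Z}}(N)\otimes\mb{Q}$, the group $G_i$ of \eqref{ntor05} is exactly the torsion subgroup of $\mr{coker}\bigl(N\colon H^i(\mc{G}(2,\mc{L})_s,\mb{Z})\to H^i(\mc{G}(2,\mc{L})_s,\mb{Z})\bigr)$; hence $G_i=0$ for all $i$ if and only if $\mr{im}(N)=d\cdot A$ is a primitive (saturated) subgroup of $H^\ast(\mc{G}(2,\mc{L})_s,\mb{Z})$. For $i=3$ this is immediate, since there $\mr{im}(N)=\mb{Z}\,d$ with $d$ primitive; this is precisely the assertion $G_1=0$ for the curve transported by $\widetilde{\Phi}_s$, and it follows directly from the Picard--Lefschetz formula together with unimodularity of the intersection form on $H^1(\widetilde{\mc{X}}_s,\mb{Z})$.

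The main obstacle is the saturation statement for general $i$: one must show that multiplication by the primitive class $d$ has torsion-free cokernel on $A$, equivalently that $H^\ast(\mc{G}(2,\mc{L})_s,\mb{Z})/(d\cdot A)$ is torsion-free. This cannot be deduced from a formal splitting of the generator $\gamma_{j_0}$, since $H^\ast(M_{\mc{X}_s}(2,\mc{L}_s),\mb{Q})$ is not a free module over the exterior algebra on a single degree-three class. Instead I would exploit the explicit integral description of the cohomology ring together with its torsion-freeness \cite[p.~$10$]{atyang}, and the $\mr{Sp}(2g,\mb{Z})$-symmetry permuting the $\gamma_j$, in order to reduce the problem to the single combinatorial multiplication map defined by the primitive class $d$. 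Establishing that this map introduces no torsion in the cokernel is the step I expect to require the most care, and it is where the primitivity of the vanishing cycle enters decisively.
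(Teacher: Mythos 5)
Your proposal follows the same route as the paper's proof: Newstead's generators $\alpha,\beta,\psi_1,\dots,\psi_{2g}$, monodromy-invariance of $\alpha$ and $\beta$ coming from global sections of the local systems (which disposes of $i\le 2$ and $i=4$), transport of the Picard--Lefschetz transformation through the Mumford--Newstead isomorphism $\widetilde{\Phi}_s$, and primitivity of the vanishing cycle to settle $i=3$. Your reformulation of $G_i$ as the torsion subgroup of $\mr{coker}(T-\mr{Id})$ and the computation $(T-\mr{Id})^2=0$ are both correct. One small omission: you assert that $\delta$ is primitive and may be completed to a symplectic basis, but this needs an argument; the paper gets it from the torsion-freeness of $H_1(X_0,\mb{Z})$ for an irreducible nodal curve, since $\delta$ generates $\ker(H_1(\mc{X}_s,\mb{Z})\to H_1(X_0,\mb{Z}))$ and a divisible generator would produce torsion in the quotient.

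The genuine gap is exactly where you flag it. For $i\ge 5$ you reduce the theorem to the claim that $\mr{im}(T-\mr{Id})$ is a saturated subgroup of $H^i(\mc{G}(2,\mc{L})_s,\mb{Z})$, equivalently that multiplication by the primitive class $d=\widetilde{\Phi}_s(\delta^c)$ has torsion-free cokernel, and you do not prove this; you only announce that it ``requires the most care.'' This unproved claim is precisely the identity the paper invokes at the corresponding point, namely
\[(T^{\mb{Q},i}_{\mc{G}(2,\mc{L})_s}-\mr{Id})H^i(\mc{G}(2,\mc{L})_s,\mb{Q})\cap V_i=\widetilde{\Phi}_s(\delta^c)\cup H^{i-3}(\mc{G}(2,\mc{L})_s,\mb{Z})=(T^i_{\mc{G}(2,\mc{L})_s}-\mr{Id})V_i,\]
so you have correctly located the crux but left the cases $i\ge 5$ (hence all intermediate Jacobians $J^i$ with $i\ge 3$, which the rest of the paper needs) unestablished. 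Before attempting to close it, you should also reconcile a discrepancy: your candidate for the integral image is $d\cdot A$ with $A$ the subring generated by the $T$-fixed generators, whereas the description above is $\widetilde{\Phi}_s(\delta^c)\cup H^{i-3}(\mc{G}(2,\mc{L})_s,\mb{Z})$; these coincide only if $d\cup\gamma_{j_0}\cup A\subseteq d\cup A$, which is a statement about the relations in the cohomology ring and does not follow formally from the list of generators.
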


  \begin{proof}
   Since $\mc{G}(2,\mc{L})_s$ is rationally connected for all $s \in \Delta^*$, we have $H^1(\mc{G}(2,\mc{L})_s,\mb{Q})=0$. This implies
   \[\mb{H}^1_{\mc{G}(2,\mc{L})_{\Delta^*}}=0 \mbox{ and } G_0=0=G_1.\]
   Let $\mc{W}:=\mc{X}_{\Delta^*} \times_{\Delta^*} \mc{G}(2,\mc{L})_{\Delta^*}$ and $\pi_3: \mc{W} \to \Delta^*$ the natural morphism.
   Let $\mb{H}^i_{\mc{W}}:=R^i \pi_{3_*} \mb{Z}_{\mc{W}}$ be the associated local system.
   By the K\"{u}nneth decomposition, we then have 
   \[\mb{H}^2_{\mc{W}} \cong \mb{H}^2_{\mc{G}(2,\mc{L})_{\Delta^*}} \oplus \mb{H}^2_{\widetilde{\mc{X}}_{\Delta^*}} \mbox{ and } 
   \mb{H}^4_{\mc{W}} \cong \mb{H}^4_{\mc{G}(2,\mc{L})_{\Delta^*}} \oplus \mb{H}^3_{\mc{G}(2,\mc{L})_{\Delta^*}} \otimes \mb{H}^1_{\widetilde{\mc{X}}_{\Delta^*}} \oplus 
   \mb{H}^2_{\mc{G}(2,\mc{L})_{\Delta^*}} \otimes \mb{H}^2_{\widetilde{\mc{X}}_{\Delta^*}}.
       \]
Now, the space of global sections of $\mb{H}^2_{\widetilde{\mc{X}}_{\Delta^*}}$ is generated by the (relative) dual fundamental class $f_{\Delta^*}$ of $\widetilde{\mc{X}}_{\Delta^*}$ i.e.,
for all $s \in \Delta^*$, the restriction of $f_{\Delta^*}$ to the fiber $\widetilde{\mc{X}}_s$ is the dual fundamental class $f_s$ of $\widetilde{\mc{X}}_s$.
As mentioned in the previous section, there exists a (relative) universal bundle $\mc{U}$ over $\mc{W}$
  associated to the (relative) moduli space $\mc{G}(2,\mc{L})_{\Delta^*}$. 
  Now, $c_1(\mc{U})$ and $c_2(\mc{U})$ define global sections of $\mb{H}^2_{\mc{W}}$ and $\mb{H}^4_{\mc{W}}$, respectively (see \cite[Proposition $10.1$]{fult}).
By \cite[Theorem $1$, Corollary $2$]{newt} (see also \cite[p.$338$]{new1}), we have 
\[c_1(\mc{U})=\phi_{\Delta^*}+f_{\Delta^*} \mbox{ and } c_2(\mc{U})=\tau_{\Delta^*}+c_2(\mc{U})^{1,3}+\omega_{\Delta^*} \otimes f_{\Delta^*}\]
 for some  $\phi_{\Delta^*}, \omega_{\Delta^*} \in \Gamma(\mb{H}^2_{\mc{G}(2,\mc{L})_{\Delta^*}})$,
 $c_2(\mc{U})^{1,3} \in \Gamma(\mb{H}^1_{\widetilde{\mc{X}}_{\Delta^*}} \otimes \mb{H}^{3}_{\mc{G}(2,\mc{L})_{\Delta^*}})$ and $\tau_{\Delta^*} \in  \Gamma(\mb{H}^4_{\mc{G}(2,\mc{L})_{\Delta^*}})$.
 Denote by \[\alpha_{\Delta^*}:=2\omega_{\Delta^*}-\phi_{\Delta^*} \mbox{ and } \beta_{\Delta^*}:=\phi^2_{\Delta^*}-4\tau_{\Delta^*}.\]
 Since $\alpha_{\Delta^*}$ and $\beta_{\Delta^*}$ are global sections of $\mb{H}^2_{\mc{G}(2,\mc{L})_{\Delta^*}}$ and $ \mb{H}^4_{\mc{G}(2,\mc{L})_{\Delta^*}}$, respectively, we have 
 \[T^2_{\mc{G}(2,\mc{L})_s}(\alpha_s)=\alpha_s \mbox{ and }T^4_{\mc{G}(2,\mc{L})_s}(\beta_s)=\beta_s,\]
 where $\alpha_s \in H^2(\mc{G}(2, \mc{L})_s, \mb{Z})$ and $\beta_s \in H^4(\mc{G}(2,\mc{L})_s, \mb{Z})$ are the restrictions
 of $\alpha_{\Delta^*}$ and $\beta_{\Delta^*}$, respectively, to the fiber $\widetilde{\mc{X}}_s$.
 Denote by $\psi_i \in H^3(\mc{G}(2,\mc{L})_s, \mb{Z})$ the image of $e_i$ (as in Theorem \ref{ntor11}) under the morphism
 \[H^1(\widetilde{\mc{X}}_\infty, \mb{Z}) \xrightarrow{\sim} H^1(\widetilde{\mc{X}}_s, \mb{Z}) \xrightarrow{\widetilde{\Phi}_s} H^3(\mc{G}(2,\mc{L})_s,\mb{Z}) \mbox{ for } i=1, ..., 2g,\]
 where $\widetilde{\Phi}_s$ is the morphism in \eqref{ner03}.
 Note that $\psi_1, ...., \psi_{2g}$ generate $H^3(\mc{G}(2,\mc{L})_s, \mb{Z})$.
 By \cite[Theorem $1$]{new1}, the cohomology ring $H^*(\mc{G}(2,\mc{L})_s, \mb{Q})$ is generated by $\alpha_s, \beta_s, \psi_1, \psi_2, ...., \psi_{2g}$.
 
 As $H^2(\mc{G}(2,\mc{L})_s, \mb{Z})$ is generated by $\alpha_s$, which is monodromy invariant, we have $G_2=0$.  Since monodromy operator commutes with cup-product (as pullback under continuous morphisms commute
 with cup-product), we have $T^2_{\mc{G}(2,\mc{L})_s}(\alpha_s \cup \alpha_s)=\alpha_s \cup \alpha_s$. Since $H^4(\mc{G}(2,\mc{L})_s, \mb{Q})$ 
 is generated by $\alpha_s \cup \alpha_s$ and $\beta_s$, we can 
 similarly conclude that $G_4=0$.
 
 Denote by $\delta \in H_1(\mc{X}_s,\mb{Z})$ the vanishing cycle associated to the degeneration of curves defined by $\pi_1$ (see \cite[\S $3.2.1$]{v5}).
  Note that $\delta$ is the generator of the kernel of the natural morphism 
 \[H_1(\mc{X}_s,\mb{Z}) \xrightarrow{i_{s_*}} H_1(\mc{X},\mb{Z}) \xrightarrow[\sim]{r_0} H_1({X}_0,\mb{Z}),\]
 where $i_s:\mc{X}_s \to \mc{X}$ is the natural inclusion of fiber and $r_0:\mc{X} \to {X}_0$ is the retraction to the 
 central fiber (see \cite[Corollary $2.17$]{v5}).
 Since  ${X}_0$ is an irreducible nodal curve, the homology group $H_1({X}_0,\mb{Z})$ is torsion-free.
 Therefore, $\delta$ is non-divisible i.e., there does not exist $\delta' \in H_1(\mc{X}_s,\mb{Z})$ such that 
 $n\delta'=\delta$ for some integer $n \not= \pm 1$. 
 Denote by $(-,-)$ the intersection form on $H_1(\mc{X}_s,\mb{Z})$, defined using cup-product (see \cite[\S $7.1.2$]{v4}).
 Since the intersection form $(-,-)$ induces a perfect pairing on $H_1(\mc{X}_s,\mb{Z})$, 
 the non-divisibility of $\delta$ implies that there exists $\gamma \in H_1(\mc{X}_s,\mb{Z})$ such that $(\gamma, \delta)=1$. 
  Recall the Picard-Lefschetz formula, \[T_{\mc{X}_s}(\eta)=\eta+(\delta, \eta) \delta \, \mbox{ for any } \eta \in  H_1(\mc{X}_s,\mb{Z}).\]
 This implies, 
 $(T_{\mc{X}_s}-\mr{Id})H^1(\mc{X}_s,\mb{Q}) \cap H^1(\mc{X}_s,\mb{Z})=\mb{Z} \delta^c=(T_{\mc{X}_s}-\mr{Id})H^1(\mc{X}_s,\mb{Z})$, where $\delta^c$ is the Poincar\'{e} 
 dual to the vanishing cycle $\delta$. Note that $T_{\mc{X}_s}=T_{\widetilde{\mc{X}}_s}$ (as $\mc{X}_s=\widetilde{\mc{X}}_s$ for all $s \in \Delta^*$).
 Since $\widetilde{\Phi}_s$ is an isomorphism, the diagram \eqref{ner03} implies that 
 \[(T_{\mc{G}(2,\mc{L})_s}^3-\mr{Id})H^3(\mc{G}(2,\mc{L})_s, \mb{Z})=\widetilde{\Phi}_s \circ (T_{\widetilde{\mc{X}}_s}-\mr{Id})H^1(\widetilde{\mc{X}}_s, \mb{Z})=\mb{Z}\widetilde{\Phi}_s(\delta^c).\]
 Similarly, we have 
 \[(T_{\mc{G}(2,\mc{L})_s}^{\mb{Q},3}-\mr{Id})H^3(\mc{G}(2,\mc{L})_s, \mb{Q}) \cap H^3(\mc{G}(2,\mc{L})_s, \mb{Z})=\widetilde{\Phi}_s \circ (T_{\widetilde{\mc{X}}_s}-\mr{Id})H^1(\widetilde{\mc{X}}_s, \mb{Q}) \cap 
 \widetilde{\Phi}_s(H^1(\widetilde{\mc{X}}_s, \mb{Z}))=\]
 \[= \widetilde{\Phi}_s \circ ((T_{\widetilde{\mc{X}}_s}-\mr{Id})H^1(\widetilde{\mc{X}}_s, \mb{Q}) \cap H^1(\widetilde{\mc{X}}_s, \mb{Z}))=\mb{Z}\widetilde{\Phi}_s(\delta^c).\]
 This implies $G_3=0$.
 
\[(T^{\mb{Q},i}_{\mc{G}(2,\mc{L})_s}-\mr{Id})H^i(\mc{G}(2,\mc{L})_s, \mb{Q}) \cap V_i=\widetilde{\Phi}_s(\delta^c) \cup H^{i-3}(\mc{G}(2,\mc{L})_s, \mb{Z})=(T^i_{\mc{G}(2,\mc{L})_s}-\mr{Id})V_i, \mbox{ where }\]
$V_i:=H^i(\mc{G}(2,\mc{L})_s, \mb{Z})$. This implies $G_i=0$ for all $i \ge 5$. This proves the theorem.
  \end{proof}

  Expanding $N_{2i-1}=\log(T^{2i-1}_{\mc{G}(2,\mc{L})_s})$, one observes that $N_{2i-1}=N'_{2i-1} \circ (T^{2i-1}_{\mc{G}(2,\mc{L})_s}-\mr{Id})$,
where $N'_{2i-1}$ is of the form $\mr{Id}+A$ for a nilpotent operator $A$. This implies that $N'_{2i-1}$ induces 
an automorphism of $H^{2i-1}(\mc{G}(2,\mc{L})_s, \mb{Q})$. It is then easy to check that $(T^{2i-1}_{\mc{G}(2,\mc{L})_s}-\mr{Id})^2=0$
if and only if $N_{2i-1}^2=0$. Under the natural identification $H^{2i-1}(\mc{G}(2,\mc{L})_s,\mb{Q}) \cong H^{2i-1}(\mc{G}(2,\mc{L})_\infty,\mb{Q})$,
Proposition \ref{ntor03} along with \eqref{t02} implies that
\[\ker(N_{2i-1}) \cong \Ima(\mr{sp}_{2i-1}) \cong W_{2i-1}H^{2i-1}(\mc{G}(2,\mc{L})_\infty, \mb{Q})\]
which contains $W_{2i-2} H^{2i-1}(\mc{G}(2,\mc{L})_\infty, \mb{Q})=N_{2i-1}(H^{2i-1}(\mc{G}(2,\mc{L})_\infty, \mb{Q}))$ (Theorem \ref{tor25}). 
Hence $N_{2i-1}^2=0$, thereby $(T^{2i-1}_{\mc{G}(2,\mc{L})_s}-\mr{Id})^2=0$.
   By Proposition \ref{ntor03}, the limit mixed Hodge structure on $H^{2i-1}(\mc{G}(2,\mc{L})_\infty, \mb{Q})$ has weight filtration 
   $W_{2i-2} \subset W_{2i-1} \subset W_{2i}$.
  It was shown by Clemens in \cite[Corollary $3.24$]{cle} and Saito in \cite[Theorem $2.8$]{sai} that in this case
  there exists a Hausdorff topological space \[\rho':\mbf{J}^i_e \to \Delta\] extending $\widetilde{\rho}:\mbf{J}^i_{\mc{G}(2,\mc{L})_{\Delta^*}} \to \Delta^*$,
  which they called the \emph{N\'{e}ron model of $\widetilde{\rho}$}. The central fiber $(\rho')^{-1}(0)$ is isomorphic as a complex Lie group
  to $J'_i$ as in \eqref{ntor13}, after replacing $\mc{Y}$ by $\mc{G}(2,\mc{L})$  (see \cite[Proposition II.A.$8$]{green}).
  We can then prove:
  \begin{cor}\label{tor02}
   The central fiber of $\rho'$ above is isomorphic as a complex Lie group to the central fiber $\left(\ov{\mbf{J}}^i_{\mc{G}(2, \mc{L})}\right)_0$ of the GGK-N\'{e}ron model 
   $\ov{\mbf{J}}^i_{\mc{G}(2, \mc{L})}$.
  \end{cor}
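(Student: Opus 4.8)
The plan is to reduce the statement to the single fact that the finite group $G_{2i-1}$ vanishes, which has already been established above. Both N\'{e}ron models under comparison have their central fibers described in terms of the same quotient $J'_i$ from \eqref{ntor13}; the only a priori discrepancy between the GGK central fiber and the Clemens--Saito central fiber is the component group $G_{2i-1}$ appearing in the GGK short exact sequence \eqref{ntor09}. Once that group is trivial, the two descriptions literally coincide, and the isomorphism follows by transitivity through $J'_i$.

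First I would recall that the paragraph preceding the corollary already verifies the hypothesis required by \emph{both} constructions, namely $(T^{2i-1}_{\mc{G}(2,\mc{L})_s}-\mr{Id})^2=0$, equivalently $N_{2i-1}^2=0$, so that the limit mixed Hodge structure on $H^{2i-1}(\mc{G}(2,\mc{L})_\infty,\mb{Q})$ carries the three-step weight filtration $W_{2i-2}\subset W_{2i-1}\subset W_{2i}$. This places us exactly in the situation treated by Clemens \cite{cle} and Saito \cite{sai}, so that $\rho':\mbf{J}^i_e\to\Delta$ exists and, by \cite[Proposition II.A.$8$]{green}, its central fiber $(\rho')^{-1}(0)$ is isomorphic as a complex Lie group to $J'_i$ as in \eqref{ntor13} (with $\mc{Y}$ replaced by $\mc{G}(2,\mc{L})$).

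Next I would invoke Theorem \ref{ntor14}, which gives $G_j=0$ for all $j\ge 0$, and in particular $G_{2i-1}=0$. Substituting this into the short exact sequence \eqref{ntor09} provided by Theorem \ref{ntor08},
\[
0 \to J'_i \to \left(\ov{\mbf{J}}^i_{\mc{G}(2,\mc{L})}\right)_0 \to G_{2i-1} \to 0,
\]
the surjection onto the now-trivial component group forces the inclusion $J'_i \hookrightarrow \left(\ov{\mbf{J}}^i_{\mc{G}(2,\mc{L})}\right)_0$ to be an isomorphism of complex Lie groups; indeed, $J'_i$ is precisely the identity component of the GGK central fiber and $G_{2i-1}$ is its group of connected components.

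Finally I would compose the two identifications: both $(\rho')^{-1}(0)$ and $\left(\ov{\mbf{J}}^i_{\mc{G}(2,\mc{L})}\right)_0$ are isomorphic as complex Lie groups to $J'_i$, hence to one another, which is the assertion. The only step carrying genuine content is the vanishing of $G_{2i-1}$, and that is exactly Theorem \ref{ntor14}; I do not anticipate any further obstacle, since the remainder is a formal consequence of the exact sequence \eqref{ntor09} and the cited structure result of Clemens and Saito.
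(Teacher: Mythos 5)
Your proposal is correct and follows exactly the paper's route: the paper's proof is the one-line observation that the claim follows from Theorems \ref{ntor08} and \ref{ntor14}, i.e.\ from the exact sequence \eqref{ntor09} together with the vanishing of $G_{2i-1}$, combined with the identification of the Clemens--Saito central fiber with $J'_i$ stated just before the corollary. You have simply written out the same argument in full detail.
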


  \begin{proof}
     This follows immediately from Theorems \ref{ntor08} and \ref{ntor14}.
  \end{proof}

 \section{N\'{e}ron model of intermediate Jacobian associated to moduli spaces}\label{nsec4}
 
  Notations as in Notation \ref{tor33} and \S \ref{subsec1}. In this section, we study the N\'{e}ron model of 
  families of intermediate Jacobians associated to the family of moduli spaces given by $\pi_2$.
  
  \subsection{Comparing $\mbf{\mr{Gr}^W_{2i-1}H^{2i-1}(\mc{G}(2,\mc{L})_\infty, \mb{Q})}$ and $\mbf{H^{2i-1}(M_{\widetilde{X}_0}(2,\widetilde{\mc{L}}_0), \mb{Q})}$}
  We first consider the case $i=2$.
  Using \cite[Proposition $1$]{mumn}, there exists an isomorphism of pure Hodge structures:
 \[\Phi_0':H^1(\widetilde{X}_0, \mb{Z}) \xrightarrow{\sim} H^3(M_{\widetilde{X}_0}(2,\widetilde{\mc{L}}_0), \mb{Z}).\]
  The Mayer-Vietoris sequence associated to the central fiber $\widetilde{\mc{X}}_0$ (notation as in Notation \ref{tor33}) is
{\small \[0 \to H^0(\widetilde{\mc{X}}_0, \mb{Z}) \to H^0(F, \mb{Z}) \oplus H^0(\widetilde{X}_0, \mb{Z}) \to H^0(F \cap \widetilde{X}_0, \mb{Z}) \to H^1(\widetilde{\mc{X}}_0, \mb{Z}) \to H^1(F, \mb{Z}) \oplus H^1(\widetilde{X}_0, \mb{Z}) \to 0.\]}
Since $H^1(F,\mb{Z})=0$, this gives us the short exact sequence:
 \begin{equation}\label{ner10}
 0 \to \mb{Z} \xrightarrow{p} H^1(\widetilde{\mc{X}}_0, \mb{Z}) \xrightarrow{q} H^1(\widetilde{X}_0,\mb{Z}) \to 0,
\end{equation}
 inducing isomorphisms $\mb{Q} \stackrel{p}{\cong} \mr{Gr}^W_0H^1(\widetilde{\mc{X}}_0, \mb{Q})$ and $\mr{Gr}^W_1H^1(\widetilde{\mc{X}}_0, \mb{Q}) \stackrel{q}{\cong} H^1(\widetilde{X}_0,\mb{Q})$.
 Using the short exact sequence \eqref{ner10} and Theorem \ref{ner01}, we have the composed morphism 
  \[\Phi_1: \mr{Gr}^W_3 H^3(\mc{G}(2,\mc{L})_\infty, \mb{Q}) \to H^3(M_{\widetilde{X}_0}(2,\widetilde{\mc{L}}_0), \mb{Q}) \mbox{ defined by }\]
    {\small \[\mr{Gr}^W_3 H^3(\mc{G}(2,\mc{L})_\infty, \mb{Q}) \xleftarrow[\sim]{\widetilde{\Phi}_0} \mr{Gr}^W_1H^1(\widetilde{\mc{X}}_\infty, \mb{Q})
   \xleftarrow[\sim]{\mr{sp}_1} \mr{Gr}^W_1H^1(\widetilde{\mc{X}}_0,\mb{Q}) \xrightarrow[\sim]{q} H^1(\widetilde{X}_0,\mb{Q}) \xrightarrow[\sim]{\Phi_0'}
   H^3(M_{\widetilde{X}_0}(2,\widetilde{\mc{L}}_0), \mb{Q}),
  \]}
where the first isomorphism is given by \eqref{ner11} and the second isomorphism follows directly from Proposition \ref{ntor03}.
  By Theorem \ref{ner01}, $\widetilde{\Phi}_0$ is an isomorphism of pure Hodge structures. Also, note that the last three morphisms in the 
  composed morphism $\Phi_1$ are morphisms of pure Hodge structures. Therefore, $\Phi_1$ is an isomorphism of pure Hodge structures.
  In general,
  
  \begin{prop}\label{ner12}
     There exists a morphism (induced by $\Phi_1$) {\small \[\Phi^{(i)}_1:\frac{\mr{Gr}^W_{2i-1}H^{2i-1}(\mc{G}(2, \mc{L})_\infty, \mb{C})}{F^i \mr{Gr}^W_{2i-1}H^{2i-1}(\mc{G}(2, \mc{L})_\infty, \mb{C})+\mr{Gr}^W_{2i-1}H^{2i-1}(\mc{G}(2, \mc{L})_\infty, \mb{Z})}
   \to J^{i-3}(M_{\widetilde{X}_0}(2, \widetilde{\mc{L}}_0)) \times J^{i}(M_{\widetilde{X}_0}(2, \widetilde{\mc{L}}_0)),\]}
where $\mr{Gr}^W_{2i-1}H^{2i-1}(\mc{G}(2, \mc{L})_\infty, \mb{Z})=\mr{Gr}^W_{2i-1}H^{2i-1}(\mc{G}(2, \mc{L})_\infty, \mb{Q}) \cap H^{2i-1}(\mc{G}(2, \mc{L})_\infty, \mb{Z}).$ Moreover, the morphism is an isomorphism for $1 \le i \le \max\{2, g-1\}$. 
   \end{prop}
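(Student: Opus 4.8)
The plan is to reduce the proposition to an explicit description, as a pure Hodge structure, of $\mr{Gr}^W_{2i-1}H^{2i-1}(\mc{G}(2,\mc{L})_\infty,\mb{Q})$: the source of $\Phi^{(i)}_1$ is by definition the intermediate Jacobian attached to this weight-graded piece, while the target $J^{i}(M_{\widetilde{X}_0}(2,\widetilde{\mc{L}}_0))\times J^{i-3}(M_{\widetilde{X}_0}(2,\widetilde{\mc{L}}_0))$ is the intermediate Jacobian of $H^{2i-1}(M_{\widetilde{X}_0}(2,\widetilde{\mc{L}}_0))\oplus H^{2i-7}(M_{\widetilde{X}_0}(2,\widetilde{\mc{L}}_0))(-3)$. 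So I would first produce a morphism of pure Hodge structures between these two groups and then apply the functorial, additive intermediate Jacobian construction, noting that $J$ of a Tate twist by $(-3)$ of a weight $2i-7$ structure is exactly $J^{i-3}$. To analyse $\mr{Gr}^W_{2i-1}H^{2i-1}$ I would work with Newstead's generators $\alpha_s,\beta_s,\psi_1,\dots,\psi_{2g}$ of $H^\ast(\mc{G}(2,\mc{L})_s,\mb{Q})$, tracking weights through the limit Mumford--Newstead isomorphism $\widetilde{\Phi}_0$ of Theorem \ref{ner01} and the basis of Theorem \ref{ntor11}.

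The central computation is the following weight bookkeeping. Since $\widetilde{\Phi}_0$ raises weights by $2$ and $F$ by $1$, Theorem \ref{ntor11} shows that $\psi_g=\widetilde{\Phi}_0(e_g)$ is a Hodge class of weight $2$ and type $(1,1)$, $\psi_{2g}=\widetilde{\Phi}_0(e_{2g})$ has weight $4$ and type $(2,2)$, and the remaining $2g-2$ classes $\psi_j$ ($j\ne g,2g$) are pure of weight $3$, while $\alpha_s,\beta_s$ have weight equal to their degree. As $\psi_g$ is the unique generator whose weight drops below its degree and $\psi_{2g}$ the unique one whose weight exceeds it, a monomial in these generators lies in $W_{2i-1}$ and maps to a nonzero class in $\mr{Gr}^W_{2i-1}H^{2i-1}$ precisely when it contains $\psi_g$ and $\psi_{2g}$ either both or neither. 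Thus, writing $R:=\langle \alpha_s,\beta_s,\psi_j: j\ne g,2g\rangle$ for the subring they generate, I would establish the decomposition of pure Hodge structures
\[
\mr{Gr}^W_{2i-1}H^{2i-1}(\mc{G}(2,\mc{L})_\infty,\mb{Q})\ \cong\ R^{2i-1}\ \oplus\ R^{2i-7}\cdot(\psi_g\psi_{2g}),
\]
where $R^{m}$ is the degree-$m$ part and $\psi_g\psi_{2g}$ is a Hodge class of type $(3,3)$.

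Next I would identify $R$ with $H^\ast(M_{\widetilde{X}_0}(2,\widetilde{\mc{L}}_0),\mb{Q})$ as graded rings carrying a Hodge structure: its degree-$3$ part is exactly $\Phi_1$, and extending multiplicatively by matching $\alpha_s,\beta_s$ with the corresponding Newstead classes on $M_{\widetilde{X}_0}(2,\widetilde{\mc{L}}_0)$ and the $2g-2$ classes $\psi_j$ with the generators of $H^3(M_{\widetilde{X}_0}(2,\widetilde{\mc{L}}_0))$ gives $R^{2i-1}\cong H^{2i-1}(M_{\widetilde{X}_0}(2,\widetilde{\mc{L}}_0))$ and $R^{2i-7}\cong H^{2i-7}(M_{\widetilde{X}_0}(2,\widetilde{\mc{L}}_0))$. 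Cup product with the type-$(3,3)$ class $\psi_g\psi_{2g}$ carries the second summand onto $H^{2i-7}(M_{\widetilde{X}_0}(2,\widetilde{\mc{L}}_0))(-3)$. This defines the morphism $\Phi^{(i)}_1$ induced by $\Phi_1$, and passing to intermediate Jacobians yields the target $J^{i-3}(M_{\widetilde{X}_0}(2,\widetilde{\mc{L}}_0))\times J^{i}(M_{\widetilde{X}_0}(2,\widetilde{\mc{L}}_0))$. For the isomorphism statement I would invoke Newstead's relations: $H^\ast(\mc{G}(2,\mc{L})_s,\mb{Q})$ is freely generated below degree $2g$ and $H^\ast(M_{\widetilde{X}_0}(2,\widetilde{\mc{L}}_0),\mb{Q})$ below degree $2(g-1)$; for $1\le i\le \max\{2,g-1\}$ the relevant degrees $2i-1$ and $2i-7$ stay inside both free ranges (and Mumford--Newstead applies to the genus $g-1$ curve $\widetilde{X}_0$), so both sides are free on matching generators and $\Phi^{(i)}_1$ is an isomorphism.

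The step I expect to be the main obstacle is the weight bookkeeping of the second paragraph: I must verify that the generators can be taken pure of the stated weights, that the monodromy weight filtration on $H^{2i-1}(\mc{G}(2,\mc{L})_\infty,\mb{Q})$ is indeed the one induced monomial-wise by these generator weights (so that $\mr{Gr}^W_{2i-1}$ is spanned exactly by the pure-weight monomials with no further cancellation), and that the sum $R^{2i-1}\oplus R^{2i-7}\cdot(\psi_g\psi_{2g})$ is direct. This requires controlling the interaction of the cup product with the limit mixed Hodge structure, for which Theorem \ref{ner01} and the compatibility of $\widetilde{\Phi}_0$ with products are the key inputs. Pinning down simultaneously the genus-$g$ and genus-$(g-1)$ freeness thresholds, together with the requirement that the Mumford--Newstead isomorphism hold for $\widetilde{X}_0$, is exactly what produces the bound $\max\{2,g-1\}$.
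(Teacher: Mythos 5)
Your proposal is correct and follows essentially the same route as the paper: both arguments use the Newstead generators together with the limit Mumford--Newstead isomorphism to exhibit a monomial basis of $\mr{Gr}^W_{2i-1}H^{2i-1}(\mc{G}(2,\mc{L})_\infty,\mb{Q})$ split according to whether $\psi_g^\infty\psi_{2g}^\infty$ occurs, identify the two summands with $H^{2i-7}(M_{\widetilde{X}_0}(2,\widetilde{\mc{L}}_0))(-3)$ and $H^{2i-1}(M_{\widetilde{X}_0}(2,\widetilde{\mc{L}}_0))$, and invoke the King--Newstead freeness range to get the isomorphism for $1 \le i \le \max\{2,g-1\}$. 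The only cosmetic difference is that you package the map as a graded ring identification of the subring $R$ with $H^*(M_{\widetilde{X}_0}(2,\widetilde{\mc{L}}_0),\mb{Q})$, whereas the paper defines the morphism $\tau_i$ directly on the monomial basis.
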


\begin{proof}
  Denote by $\psi^\infty_i:=\widetilde{\Phi}_0(e_i)$, where $e_i \in H^1(\widetilde{\mc{X}}_\infty, \mb{Z})$ as in Theorem \ref{ntor11}, $1 \le i \le 2g$
  and $\widetilde{\Phi}_0$ as in \eqref{ner11}. Fix $s \in \Delta^*$.
Let $\psi_i \in H^3(\mc{G}(2,\mc{L})_s, \mb{Z})$ be the image of $\psi_i^\infty$ under the natural isomorphism 
\begin{equation}\label{ntor15}
 H^j(\mc{G}(2,\mc{L})_\infty, \mb{Z}) \xrightarrow{\sim} H^j(\mc{G}(2,\mc{L})_s, \mb{Z}), \mbox{ for all } j \ge 0.
\end{equation}
Using \cite[Theorem $1$]{new1}, one can observe that for any $s \in \Delta^*$, 
there exist elements \[\alpha_s \in H^{1,1}(\mc{G}(2,\mc{L})_s, \mb{Z}) \mbox{ and } \beta_s \in 
 H^{2,2}(\mc{G}(2,\mc{L})_s, \mb{Z})\] such that the cohomology ring $H^*(\mc{G}(2,\mc{L})_s, \mb{Q})$ 
 is generated by $\alpha_s, \beta_s, \psi_1, \psi_2, ..., \psi_{2g}$.
  Denote by \[\alpha_\infty \in H^2(\mc{G}(2,\mc{L})_\infty, \mb{Z}) \mbox{ and } \beta_\infty \in H^4(\mc{G}(2,\mc{L})_\infty, \mb{Z})\]
the preimage of $\alpha_s$ and  $\beta_s$, respectively, under the natural isomorphism \eqref{ntor15}.
 It is immediate that the cohomology ring $H^*(\mc{G}(2,\mc{L})_\infty, \mb{Q})$ is generated by 
 $\alpha_\infty, \beta_\infty, \psi_{1}^\infty, \psi_{2}^\infty, ..., \psi_{2g}^\infty$.
 Since $H^2(\mc{G}(2,\mc{L})_\infty, \mb{Z})$ (resp. $H^4(\mc{G}(2,\mc{L})_\infty, \mb{Z})$) is one (resp. two) dimensional, generated by 
 $\alpha_\infty$ (resp. $\alpha_\infty^2$ and $\beta_\infty$), we conclude that (use cup-product is a morphism of MHS)
 \[\alpha_\infty \in H^{1,1}(\mc{G}(2,\mc{L})_\infty, \mb{Z}) \mbox{ and } \beta_\infty \in H^{2,2}(\mc{G}(2,\mc{L})_\infty, \mb{Z}).\]
  Since cup-product morphism is a morphism of mixed Hodge structures, Theorem \ref{ner01} along with \cite[Remark $5.3$]{kingn} imply that 
  a $\mb{Q}$-basis of 
 $\mr{Gr}^W_i H^i(\mc{G}(2,\mc{L})_\infty, \mb{Q})$ consists of monomials of the form 
 $\psi_g^\infty.\psi_{2g}^\infty(\alpha_\infty^{i_1}\beta_\infty^{j_1}\psi^\infty_{k_1}\psi^\infty_{k_2}...\psi^\infty_{k_m})$
 and $\alpha_\infty^{i_1'}\beta_\infty^{j_1'}\psi^\infty_{k'_1}\psi^\infty_{k'_2}...\psi^\infty_{k'_r}$ such that 
 for any $1 \le t \le r$, $k'_t \not\in \{g, 2g\}$, $m+i_1+2<g$, $m+j_1+2<g$, $i_1'+r<g$, $j_1'+r<g$, $2i_1+4j_1+3(m+2)=i$ and $2i_1'+4j_1'+3r=i$.
 By \cite[Theorem $1$]{new1}, there exists $\alpha_0 \in H^2(M_{\widetilde{X}_0}(2,\widetilde{\mc{L}}_0), \mb{Z})$
 and $\beta_0  \in H^4(M_{\widetilde{X}_0}(2,\widetilde{\mc{L}}_0), \mb{Z})$ such that $\alpha_0$ (resp. $\alpha_0^2, \beta_0$)
 generates $H^2(M_{\widetilde{X}_0}(2,\widetilde{\mc{L}}_0), \mb{Q})$ (resp. $H^4(M_{\widetilde{X}_0}(2,\widetilde{\mc{L}}_0), \mb{Q})$).
 We can then define:
 \[\tau_i: \mr{Gr}^W_i H^i(\mc{G}(2,\mc{L})_\infty, \mb{Q}) \rightarrow H^{i-6}(M_{\widetilde{X}_0}(2,\widetilde{\mc{L}}_0), \mb{Q}) \oplus H^{i}(M_{\widetilde{X}_0}(2,\widetilde{\mc{L}}_0), \mb{Q}) \mbox{ by }\]
  \begin{align*}
  \tau_i(\psi_g^\infty.\psi_{2g}^\infty(\alpha_\infty^{i_1}\beta_\infty^{j_1}\psi^\infty_{k_1}\psi^\infty_{k_2}...\psi^\infty_{k_m}))& =(\alpha_0^{i_1}\beta_0^{j_1}\Phi_1(\psi^\infty_{k_1})\Phi_1(\psi^\infty_{k_2})... \Phi_1(\psi^\infty_{k_m})) \oplus 0 \mbox{ and }\\
 \tau_i(\alpha_\infty^{i'_1}\beta_\infty^{j'_1}\psi^\infty_{k'_1}\psi^\infty_{k'_2}...\psi^\infty_{k'_r})& =0 \oplus (\alpha_0^{i'_1}\beta_0^{j'_1}\Phi_1(\psi^\infty_{k'_1})\Phi_1(\psi^\infty_{k'_2})... \Phi_1(\psi^\infty_{k'_r})), \mbox{ if } k'_t \not\in\{g, 2g\}.
 \end{align*}
 As cup-product is a morphism of Hodge structures and $\psi_g^\infty \psi_{2g}^\infty$ is of Hodge type $(3,3)$, one can check that $\tau_i$ 
 is a morphism of pure Hodge structures in the sense that $\tau_i$ maps Hodge type $(p, i-p)$ to $(p-3, i-p-3) \oplus (p, i-p)$ for all $i \ge 0$.
 Note that for $1 \le i \le \max\{3, 2g-3\}$, the above inequalities imply that $m+i_1<g-1, m+j_1<g-1, i_1'+r<g-1$ and $j_1'+r<g-1$.
 Using \cite[Remark $5.3$]{kingn} once again, we conclude that $\tau_i$ is an isomorphism for $1 \le i \le \max\{3, 2g-3\}$. 
  This induces an isomorphism
{\small \[\frac{\mr{Gr}^W_{2i-1}H^{2i-1}(\mc{G}(2, \mc{L})_\infty, \mb{C})}{F^i \mr{Gr}^W_{2i-1}H^{2i-1}(\mc{G}(2, \mc{L})_\infty, \mb{C})+\mr{Gr}^W_{2i-1}H^{2i-1}(\mc{G}(2, \mc{L})_\infty, \mb{Z})}
  \xrightarrow{\tau_{2i-1}} J^{i-3}(M_{\widetilde{X}_0}(2, \widetilde{\mc{L}}_0)) \times J^{i}(M_{\widetilde{X}_0}(2, \widetilde{\mc{L}}_0))
 \]}
for $1 \le i \le \max\{2, g-1\}$. This proves the proposition.
 \end{proof}
 
 \subsection{Central fiber of the N\'{e}ron model}
 Using \eqref{ntor04}, we have a flat family $\widetilde{\rho}: \mbf{J}^i_{\mc{G}(2,\mc{L})_{\Delta^*}} \to \Delta^*$ such that  
 $(\widetilde{\rho})^{-1}(s)=J^i(\mc{G}(2,\mc{L})_s)$ for each  $s \in \Delta^*.$
    By Theorem \ref{ntor08}, there exists a N\'{e}ron model
  \[\ov{\rho}:\ov{\mbf{J}}^i_{\mc{G}(2, \mc{L})} \to \Delta\]
  holomorphically extending the family $\widetilde{\rho}$. We now describe the central fiber 
  of the N\'{e}ron model in terms of the intermediate Jacobian of the Jacobian $\mr{Jac}(\widetilde{X}_0)$ of $\widetilde{X}_0$.
 
 We briefly discuss the idea of the proof of Theorem \ref{ntor12} below.
  The first step is to apply Theorem \ref{ntor14} to Theorem \ref{ntor08}, to express the central fiber 
  of $\ov{\mbf{J}}^i_{\mc{G}(2, \mc{L})}$ as a quotient of \[W_{2i-1}H^{2i-1}(\mc{G}(2,\mc{L})_\infty, \mb{Q}).\]
  This quotient of $W_{2i-1}H^{2i-1}(\mc{G}(2,\mc{L})_\infty, \mb{Q})$ naturally induces a quotient of 
  \[\mr{Gr}^W_{2i-1}H^{2i-1}(\mc{G}(2,\mc{L})_\infty, \mb{Q}),\] which we show is isomorphic to a product of 
  intermediate Jacobians of $\mr{Jac}(\widetilde{X}_0)$ as given in the statement of Theorem \ref{ntor12}. Here we 
  use Proposition \ref{ner12}. As a result, we can view the central fiber of the N\'{e}ron model 
  as a fibration over this product of intermediate Jacobians of $\mr{Jac}(\widetilde{X}_0)$.
  The fiber of the resulting morphism arises as a natural subquotient of a quotient of $W_{2i-1}H^{2i-1}(\mc{G}(2,\mc{L})_\infty, \mb{Q})$
  induced by the natural inclusion $W_{2i-2}H^{2i-1}(\mc{G}(2,\mc{L})_\infty, \mb{Q}) \hookrightarrow
  W_{2i-1}H^{2i-1}(\mc{G}(2,\mc{L})_\infty, \mb{Q})$. In order to give a more explicit description of the fiber, 
  we prove that $W_{2i-2}H^{2i-1}(\mc{G}(2,\mc{L})_\infty, \mb{Q})$ can be 
  identified with $H^{2i-4}(M_{\widetilde{X}_0}(2,\widetilde{\mc{L}}_0), \mb{Q})$, as pure Hodge structures, thereby proving
Theorem \ref{ntor12} below.
  
  \begin{thm}\label{ntor12}
 There exists a  morphism $\eta^{(i)}: \left(\ov{\mbf{J}}^i_{\mc{G}(2, \mc{L})}\right)_0 \to \prod\limits_{k=1}^{[\frac{g}{2}]} J^k(\mr{Jac}(\widetilde{X}_0))^{d_{i,k}}$ such that \[\mbox{ for } 1 \le i \le \max\{2, g-1\},\, \eta^{(i)} \mbox{ is surjective and } pure \left(\left(\ov{\mbf{J}}^i_{\mc{G}(2, \mc{L})}\right)_0\right) \stackrel{\eta^{(i)}}{\cong} \prod\limits_{k=1}^{[\frac{g}{2}]} J^k(\mr{Jac}(\widetilde{X}_0))^{d_{i,k}},\]
 where $d_{i,k}$ is the coefficient of $t^{i-3k+1}$ of the polynomial
 \[(1+t^3)(1+t+t^2+...+t^{g-1-2k})(1+t^2+t^4+...+t^{2(g-1)-4k}).\]
 Moreover, for $1 \le i \le \max\{2,g-1\}$, we have \[\ker \eta^{(i)} \cong (H^{2i-4}(M_{\widetilde{X}_0}(2,\widetilde{\mc{L}}_0), \mb{C}))/(F^{i-1} H^{2i-4}(M_{\widetilde{X}_0}(2,\widetilde{\mc{L}}_0), \mb{C})
+H^{2i-4}(M_{\widetilde{X}_0}(2,\widetilde{\mc{L}}_0), \mb{Z})).\]
  \end{thm}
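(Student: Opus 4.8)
Write $M:=M_{\widetilde{X}_0}(2,\widetilde{\mc{L}}_0)$ and $A:=\mr{Jac}(\widetilde{X}_0)$, and recall from Mumford--Newstead (Theorem \ref{ner01} and \cite[Proposition $1$]{mumn}) that $H^3(M)\cong H^1(A)(-1)$ as Hodge structures. The plan is to first collapse the central fiber to a concrete quotient of a limit cohomology group, then read off the pure part and the kernel of the projection onto it. Combining Theorem \ref{ntor14} ($G_{2i-1}=0$) with the exact sequence \eqref{ntor09} of Theorem \ref{ntor08} gives $\left(\ov{\mbf{J}}^i_{\mc{G}(2, \mc{L})}\right)_0\cong J'_i$. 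The discussion following Theorem \ref{ntor14} shows $N_{2i-1}^2=0$, so the monodromy weight filtration on $H^{2i-1}(\mc{G}(2,\mc{L})_\infty,\mb{Q})$ is $W_{2i-2}\subset W_{2i-1}\subset W_{2i}$ with $\ker N_{2i-1,\mb{C}}=W_{2i-1}H^{2i-1}(\mc{G}(2,\mc{L})_\infty,\mb{C})$; hence $J'_i=W_{2i-1}H^{2i-1}_{\mb{C}}/(F^i+W_{2i-1,\mb{Z}})$. I would define $\eta^{(i)}$ as the composite of the quotient $\left(\ov{\mbf{J}}^i_{\mc{G}(2, \mc{L})}\right)_0\cong J'_i\twoheadrightarrow pure\left(\left(\ov{\mbf{J}}^i_{\mc{G}(2, \mc{L})}\right)_0\right)$ of Definition \ref{def:ner01} with the identification of the pure part established below.

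For the pure part, Definition \ref{def:ner01} together with Proposition \ref{ner12} identifies $pure\left(\left(\ov{\mbf{J}}^i_{\mc{G}(2, \mc{L})}\right)_0\right)$ with $\mr{Gr}^W_{2i-1}H^{2i-1}_{\mb{C}}/(F^i+\mr{Gr}^W_{2i-1,\mb{Z}})$, which for $1\le i\le\max\{2,g-1\}$ is isomorphic to $J^{i-3}(M)\times J^{i}(M)$. It then remains to decompose each $J^m(M)$ into intermediate Jacobians of $A$. Here I would use the King--Newstead description of $H^*(M,\mb{Q})$: it is generated by $\alpha_0\in H^2$ and $\beta_0\in H^4$, both of Hodge--Tate type, together with the degree-three classes spanning $H^1(A)(-1)$. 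Consequently every $H^{2m-1}(M,\mb{Q})$ is, as a Hodge structure, a direct sum of Tate twists of full exterior powers $\Lambda^{2k-1}H^1(A)$, and since a summand $\big(\Lambda^{2k-1}H^1(A)\big)(-b)$ of weight $2m-1$ contributes exactly $\Lambda^{2k-1}H^1(A)_{\mb{C}}/(F^k+\Lambda^{2k-1}H^1(A)_{\mb{Z}})=J^k(A)$ to $J^m(M)$, it suffices to count multiplicities. A monomial $\alpha_0^a\beta_0^b$ times a product of $2k-1$ degree-three classes lands in $H^{2m-1}(M)$ precisely when $a+2b=m-3k+1$; letting $t$ record $\alpha_0$ and $t^2$ record $\beta_0$, and imposing the King--Newstead range bounds, the admissible $(a,b)$ are enumerated by $(1+t+\cdots+t^{g-1-2k})(1+t^2+\cdots+t^{2(g-1)-4k})$. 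The two summands $J^{i-3}(M)$ and $J^i(M)$, equivalently the presence or absence of the degree-six class $\psi_g^\infty\psi_{2g}^\infty$, contribute the factor $(1+t^3)$, so the total multiplicity of $J^k(A)$ is the coefficient $d_{i,k}$ of $t^{i-3k+1}$. This gives $pure\left(\left(\ov{\mbf{J}}^i_{\mc{G}(2, \mc{L})}\right)_0\right)\cong\prod_k J^k(A)^{d_{i,k}}$ and surjectivity of $\eta^{(i)}$ onto the full product.

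To compute $\ker\eta^{(i)}$, which is the image of $W_{2i-2}H^{2i-1}(\mc{G}(2,\mc{L})_\infty,\mb{C})$ in $J'_i$, I would first determine $W_{2i-2}H^{2i-1}$ as a pure Hodge structure. By Theorem \ref{tor25}(2), $N_{2i-1}\colon\mr{Gr}^W_{2i}\xrightarrow{\sim}\mr{Gr}^W_{2i-2}$, so $W_{2i-2}H^{2i-1}=\Ima(N_{2i-1})\cong\mr{Gr}^W_{2i}H^{2i-1}(-1)$. Applying Proposition \ref{ntor03} to $\mc{G}_{X_0}(2,\mc{L}_0)=\mc{G}_0\cup\mc{G}_1$ and using that $\mc{G}_0\cap\mc{G}_1$ is a $\mb{P}^1\times\mb{P}^1$-bundle over $M$, the projective bundle formula identifies this graded piece and yields an isomorphism of pure Hodge structures $W_{2i-2}H^{2i-1}(\mc{G}(2,\mc{L})_\infty,\mb{C})\cong H^{2i-4}(M,\mb{C})(-1)$. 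Under this isomorphism $F^i$ corresponds to $F^{i-1}H^{2i-4}(M)$ and the lattice to $H^{2i-4}(M,\mb{Z})$, so after checking strictness of $W$ with respect to $F$ and the integral structure, the image of $W_{2i-2}$ in $J'_i$ becomes $H^{2i-4}(M,\mb{C})/(F^{i-1}H^{2i-4}(M)+H^{2i-4}(M,\mb{Z}))$, the asserted description of $\ker\eta^{(i)}$.

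The main obstacle is the multiplicity count in the second step: one must verify that, in the range $1\le i\le\max\{2,g-1\}$, the products $\alpha_0^a\beta_0^b\cdot\Lambda^{2k-1}H^1(A)$ inject and realize \emph{full} exterior powers rather than only the $Sp$-primitive pieces, and that the King--Newstead relations begin to truncate these products exactly at the bounds $a\le g-1-2k$ and $b\le g-1-2k$. This is precisely where the genus bound enters, since beyond it the clean product form of the enumerating polynomial fails. By comparison, the identification $W_{2i-2}H^{2i-1}\cong H^{2i-4}(M)(-1)$ is routine given Proposition \ref{ntor03}, modulo the standard strictness bookkeeping needed to transport the Hodge filtration and the lattice.
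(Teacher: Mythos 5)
Your overall architecture coincides with the paper's: you reduce the central fibre to $J'_i=W_{2i-1}H^{2i-1}(\mc{G}(2,\mc{L})_\infty,\mb{C})/(F^i+W_{2i-1}H^{2i-1}(\mc{G}(2,\mc{L})_\infty,\mb{Z}))$ via Theorems \ref{ntor08} and \ref{ntor14} and the invariant cycle theorem, define $\eta^{(i)}$ as the projection to the weight-$(2i-1)$ graded quotient, invoke Proposition \ref{ner12} to identify the pure part with $J^{i-3}(M)\times J^{i}(M)$ where $M:=M_{\widetilde{X}_0}(2,\widetilde{\mc{L}}_0)$, and identify the kernel with a quotient of $W_{2i-2}H^{2i-1}$. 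The one genuine gap is the step you yourself flag as the ``main obstacle'': the decomposition $J^{i-3}(M)\times J^{i}(M)\cong\prod_k J^k(\mr{Jac}(\widetilde{X}_0))^{d_{i,k}}$ with $d_{i,k}$ read off from the stated polynomial. You propose to extract this from the King--Newstead ring structure, but that requires proving that $H^{2m-1}(M,\mb{Q})$ decomposes, as a Hodge structure, into Tate twists of \emph{full} exterior powers $\Lambda^{2k-1}H^1(\mr{Jac}(\widetilde{X}_0))$ with multiplicities enumerated exactly by $(1+t+\cdots+t^{g-1-2k})(1+t^2+\cdots+t^{2(g-1)-4k})$, i.e.\ that the Mumford relations truncate the monomials $\alpha_0^a\beta_0^b\cdot(\text{degree-three classes})$ precisely at those bounds and that no collapse onto $Sp$-primitive pieces occurs. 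This is not routine bookkeeping; it is precisely the content of del Ba\~{n}o's result \cite[Corollary $2.10$]{bano}, which is what the paper cites: $J^m(M)\cong\prod_k J^k(\mr{Jac}(\widetilde{X}_0))^{c_{m,k}}$ with $c_{m,k}$ the coefficient of $t^{m-3k+1}$ in that product, the extra factor $(1+t^3)$ then accounting for the two summands $J^{i-3}(M)$ and $J^{i}(M)$. As written, your derivation of the formula for $d_{i,k}$ is incomplete, and without it the displayed target of $\eta^{(i)}$ is not established.

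For the kernel you take a mildly different route from the paper: instead of the explicit map $\tau'$ on King--Newstead generators (sending $\alpha_\infty^{i_1}\beta_\infty^{i_2}\psi_g^\infty\psi_{j_1}^\infty\cdots\psi_{j_k}^\infty$ to $\alpha_0^{i_1}\beta_0^{i_2}\Phi_1(\psi_{j_1}^\infty)\cdots\Phi_1(\psi_{j_k}^\infty)$), you use $N_{2i-1}\colon\mr{Gr}^W_{2i}\xrightarrow{\sim}\mr{Gr}^W_{2i-2}(-1)$ together with the Steenbrink/Clemens--Schmid identification of $\mr{Gr}^W_{2i}H^{2i-1}$ with the kernel of the Gysin map out of $H^{2i-2}(\mc{G}_0\cap\mc{G}_1)$, which is exactly Proposition \ref{ner15}. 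Both routes give $W_{2i-2}H^{2i-1}\cong H^{2i-4}(M)(-1)$, though your twist should read $\mr{Gr}^W_{2i}H^{2i-1}(1)$ rather than $(-1)$, and since $N$ is only a $\mb{Q}$-morphism you still owe the transport of the integral lattice and the strictness identity $F^iW_{2i-1}\cap W_{2i-2}=F^iW_{2i-2}$, which the paper's explicit $\tau'$ handles on the nose. These points are fixable; the del Ba\~{n}o step is the real missing ingredient.
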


\begin{proof}
Recall, \cite[Corollary $2.10$]{bano} states that $J^i(M_{\widetilde{X}_0}(2, \widetilde{\mc{L}}_0)) \cong \prod\limits_{k=1}^{[\frac{g}{2}]} J^k(\mr{Jac}(\widetilde{X}_0))^{c_{i,k}},$
 where $c_{i,k}$ is the coefficient of $t^{i-3k+1}$ of the polynomial 
 \[g(t):=(1+t+t^2+...+t^{g-1-2k})(1+t^2+t^4+...+t^{2g-2-4k}).\]
 This implies, 
 \[J^{i-3}(M_{\widetilde{X}_0}(2, \widetilde{\mc{L}}_0)) \times J^i(M_{\widetilde{X}_0}(2, \widetilde{\mc{L}}_0)) \cong \prod\limits_{k=1}^{[\frac{g}{2}]} J^k(\mr{Jac}(\widetilde{X}_0))^{d_{i,k}},\]
 where $d_{i,k}$ is the coefficient of $t^{i-3k+1}$ of the polynomial $(1+t^3)g(t)$.
  Let $T_i$ be the monodromy automorphism  as in \eqref{int01}, after replacing $\mc{Y}$ by $\mc{G}(2,\mc{L})$.
  Let $T_{i,\mb{C}}$ be the induced automorphism on $H^i(\mc{G}(2, \mc{L})_\infty, \mb{C})$ and $N_{i,\mb{C}}:=\log(T_{i,\mb{C}})$.
  By \eqref{t02}, we have 
 \[\ker(T_i-\mr{Id}) \cap H^i(\mc{G}(2,\mc{L})_\infty, \mb{Z})=\mr{sp}_i(H^i(\mc{G}_{X_0}(2,\mc{L}_0), \mb{Z})),\]
 where $\mr{sp}_i$ is the specialization morphism as in Proposition \ref{ntor03}.
  Using \eqref{ntor13} combined with Theorems \ref{ntor08} and \ref{ntor14}, we then have 
 \[\left(\ov{\mbf{J}}^i_{\mc{G}(2, \mc{L})}\right)_0 \cong \frac{\ker N_{2i-1, \mb{C}}}{F^i \ker  N_{2i-1, \mb{C}}+ \left(\ov{\mb{H}}^{2i-1}_{\mc{G}(2,\mc{L})}\right)_0}
  \cong \frac{S_{2i-1}}{F^iS_{2i-1}+\mr{sp}_{2i-1}(H^{2i-1}(\mc{G}_{X_0}(2,\mc{L}_0), \mb{Z}))},\]
where $S_{2i-1}:=\mr{sp}_{2i-1}(H^{2i-1}(\mc{G}_{X_0}(2,\mc{L}_0), \mb{C}))$ and the last isomorphism follows from the invariant cycle theorem.
 By Proposition \ref{ntor03}, $\mr{sp}_{2i-1}(H^{2i-1}(\mc{G}_{X_0}(2,\mc{L}_0), \mb{Q}))=W_{2i-1}H^{2i-1}(\mc{G}(2,\mc{L})_\infty, \mb{Q})$. Hence, 
 $\mr{sp}_{2i-1}(H^{2i-1}(\mc{G}_{X_0}(2,\mc{L}_0), \mb{Z}))$ coincides with 
  \[W_{2i-1}H^{2i-1}(\mc{G}(2,\mc{L})_\infty, \mb{Z}):=W_{2i-1}H^{2i-1}(\mc{G}(2,\mc{L})_\infty, \mb{Q}) \cap H^{2i-1}(\mc{G}(2,\mc{L})_\infty, \mb{Z}).\]
Therefore, 
\[\left(\ov{\mbf{J}}^i_{\mc{G}(2, \mc{L})}\right)_0 \cong \frac{W_{2i-1}H^{2i-1}(\mc{G}(2, \mc{L})_\infty, \mb{C})}{F^i W_{2i-1}H^{2i-1}(\mc{G}(2, \mc{L})_\infty, \mb{C})+W_{2i-1}H^{2i-1}(\mc{G}(2, \mc{L})_\infty, \mb{Z})}.\]
Proposition \ref{ner12} implies that the natural projection morphism from $W_{2i-1}H^{2i-1}(\mc{G}(2, \mc{L})_\infty, \mb{C})$ to $\mr{Gr}^W_{2i-1}H^{2i-1}(\mc{G}(2, \mc{L})_\infty, \mb{C})$
induces a morphism 
$\eta^{(i)}: \left(\ov{\mbf{J}}^i_{\mc{G}(2, \mc{L})}\right)_0 \to \prod\limits_{k=1}^{[\frac{g}{2}]} J^k(\mr{Jac}(\widetilde{X}_0))^{d_{i,k}}$
such that  \[ \mbox{ for } 1 \le i \le \max\{2, g-1\},\,  \eta^{(i)} \mbox{ is surjective and } pure\left(\left(\ov{\mbf{J}}^i_{\mc{G}(2, \mc{L})}\right)_0\right) \stackrel{\eta^{(i)}}{\cong} \prod\limits_{k=1}^{[\frac{g}{2}]} J^k(\mr{Jac}(\widetilde{X}_0))^{d_{i,k}}.\]
For $1 \le i \le \max\{2, g-1\}$, the kernel of the morphism $\eta^{(i)}$ is isomorphic to \[\frac{W_{2i-2} H^{2i-1}(\mc{G}(2,\mc{L})_\infty, \mb{C})}{F^i W_{2i-2} H^{2i-1}(\mc{G}(2,\mc{L})_\infty, \mb{C})+W_{2i-2} H^{2i-1}(\mc{G}(2,\mc{L})_\infty, \mb{Z})}.\]
Let $\alpha_\infty \in H^2(\mc{G}(2,\mc{L})_\infty), \beta_\infty \in H^4(\mc{G}(2,\mc{L})_\infty), \alpha_0 \in H^2(M_{\widetilde{X}_0}(2,\widetilde{\mc{L}}_0)), 
 \beta_0 \in H^4(M_{\widetilde{X}_0}(2,\widetilde{\mc{L}}_0))$ and $\psi_1^\infty, \psi_2^\infty, ..., \psi_{2g}^\infty \in H^3(\mc{G}(2,\mc{L})_\infty)$
 as defined in the proof of Proposition \ref{ner12}. Using Theorem \ref{ner01}, $\psi_g^\infty$ generates $W_2H^3(\mc{G}(2,\mc{L})_\infty, \mb{Q})$.
 As cup-product is a morphism of mixed Hodge structures, it is then easy to check that 
$W_{2i-2}H^{2i-1}(\mc{G}(2,\mc{L})_\infty, \mb{Q})$ is $\mb{Q}$-generated by monomials of the form 
$\alpha_\infty^{i_1}\beta_\infty^{i_2}\psi_g^\infty\psi_{j_1}^\infty\psi_{j_2}^\infty...\psi_{j_k}^\infty$ with $j_t \not= 2g$ for all $1 \le t \le k$ (use \cite[Remark $5.3$]{kingn}).
Define the morphism \[\tau':W_{2i-2}H^{2i-1}(\mc{G}(2,\mc{L})_\infty, \mb{Q}) \to H^{2i-4}(M_{\widetilde{X}_0}(2,\widetilde{\mc{L}}_0), \mb{Q}) \mbox{ as }\]
  \[\tau'(\alpha_\infty^{i_1}\beta_\infty^{i_2}\psi_g^\infty\psi_{j_1}^\infty\psi_{j_2}^\infty...\psi_{j_k}^\infty)=\alpha_0^{i_1}\beta_0^{i_2}\Phi_1(\psi_{j_1}^\infty)\Phi_1(\psi_{j_2}^\infty)...\Phi_1(\psi_{j_k}^\infty)\]
and extend linearly. Since $\psi_g^\infty$ is of Hodge type $(1,1)$ and $\Phi_1$ is an isomorphism of Hodge structures, 
it is easy to check that $\tau'$ is an isomorphism of pure Hodge structures which sends Hodge type $(p,2i-2-p)$
to $(p-1, 2i-3-p)$ (use $W_{2i-2}H^{2i-1}(\mc{G}(2,\mc{L})_\infty, \mb{Q}) = \mr{Gr}^W_{2i-2}H^{2i-1}(\mc{G}(2,\mc{L})_\infty, \mb{Q})$
by Proposition \ref{ntor03}, hence pure). Therefore,
 \[\frac{W_{2i-2} H^{2i-1}(\mc{G}(2,\mc{L})_\infty, \mb{C})}{F^i W_{2i-2} H^{2i-1}(\mc{G}(2,\mc{L}), \mb{C})+W_{2i-2} H^{2i-1}(\mc{G}(2,\mc{L})_\infty, \mb{Z})}
\stackrel{\tau'}{\cong}
\frac{H^{2i-4}}{F^{i-1} H^{2i-4}
+H^{2i-4}(M_{\widetilde{X}_0}(2,\widetilde{\mc{L}}_0), \mb{Z})},\]
where $H^{2i-4}:=H^{2i-4}(M_{\widetilde{X}_0}(2,\widetilde{\mc{L}}_0), \mb{C})$. 
This proves the theorem.
\end{proof}
 
 \begin{rem}\label{rem:ner01}
  Note that $\left(\ov{\mbf{J}}^1_{\mc{G}(2, \mc{L})}\right)_0=0$.
  The theorem immediately tells us that the central fiber of the N\'{e}ron model is \emph{never} an abelian variety. However, we can show that:
 \end{rem}

 \begin{cor}\label{corsemiab}
  For $i=2$, the central fiber $\left(\ov{\mbf{J}}^i_{\mc{G}(2, \mc{L})}\right)_0$ is a semi-abelian variety. Moreover, for $g \ge 5$ and 
  $i=2, 3, 4$, $pure\left(\ov{\mbf{J}}^i_{\mc{G}(2, \mc{L})}\right)_0$ is an abelian variety and 
  $\left(\ov{\mbf{J}}^i_{\mc{G}(2, \mc{L})}\right)_0$ is a semi-abelian variety.
 \end{cor}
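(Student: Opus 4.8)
The plan is to extract everything from the short exact sequence
\[0 \to \ker \eta^{(i)} \to \left(\ov{\mbf{J}}^i_{\mc{G}(2, \mc{L})}\right)_0 \xrightarrow{\eta^{(i)}} pure\left(\ov{\mbf{J}}^i_{\mc{G}(2, \mc{L})}\right)_0 \to 0\]
provided by Theorem \ref{ntor12}, and to show that in the range $i \le 4$ the quotient is an abelian variety while the kernel is an algebraic torus; an extension of an abelian variety by a torus is, by definition, a semi-abelian variety.

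First I would treat the pure part. By Theorem \ref{ntor12} we have $pure\left(\ov{\mbf{J}}^i_{\mc{G}(2, \mc{L})}\right)_0 \cong \prod_{k=1}^{[\frac{g}{2}]} J^k(\mr{Jac}(\widetilde{X}_0))^{d_{i,k}}$, where $d_{i,k}$ is the coefficient of $t^{i-3k+1}$ in a polynomial with non-negative coefficients and non-zero constant term. Hence $d_{i,k}$ can be non-zero only when $i-3k+1 \ge 0$, i.e. $k \le (i+1)/3$. For $i = 2, 3, 4$ this forces $k = 1$, so the pure part reduces to $J^1(\mr{Jac}(\widetilde{X}_0))^{d_{i,1}}$. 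Since $H^1$ of a smooth projective variety is a polarizable weight-one Hodge structure, $J^1(\mr{Jac}(\widetilde{X}_0))$ is a (principally polarized) abelian variety, and so is any finite product of copies of it; this is exactly the abelianness asserted in Theorem \ref{intthm3}. It is worth emphasizing that the step genuinely uses $i \le 4$: for $k \ge 2$ the intermediate Jacobian $J^k(\mr{Jac}(\widetilde{X}_0))$ is only a complex torus and in general fails to be abelian, so the numerical vanishing $d_{i,k}=0$ for $k \ge 2$ is precisely what saves the argument.

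Next I would show $\ker\eta^{(i)}$ is a torus. By Theorem \ref{ntor12}, $\ker\eta^{(i)} \cong H^{2i-4}_{\mb{C}}/(F^{i-1}H^{2i-4}_{\mb{C}} + H^{2i-4}_{\mb{Z}})$, where $H^{2i-4}_{\mb{C}}$ and $H^{2i-4}_{\mb{Z}}$ abbreviate $H^{2i-4}(M_{\widetilde{X}_0}(2,\widetilde{\mc{L}}_0),\mb{C})$ and $H^{2i-4}(M_{\widetilde{X}_0}(2,\widetilde{\mc{L}}_0),\mb{Z})$. For $i = 2, 3, 4$ the relevant groups are $H^0$, $H^2$ and $H^4$, which by Newstead's description of the cohomology ring (already used in Proposition \ref{ner12}) are generated respectively by $1$, by $\alpha_0$, and by $\{\alpha_0^2, \beta_0\}$, all of pure Hodge type $(i-2, i-2)$. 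Consequently $H^{2i-4}$ is a pure Tate Hodge structure of weight $2i-4$, so $F^{i-1}H^{2i-4}_{\mb{C}} = 0$ and the integral lattice $H^{2i-4}_{\mb{Z}}$ has rank equal to $\dim_{\mb{C}} H^{2i-4}_{\mb{C}}$. Therefore $\ker\eta^{(i)} \cong H^{2i-4}_{\mb{C}}/H^{2i-4}_{\mb{Z}} \cong (\mb{C}^*)^{b}$ with $b = \dim_{\mb{C}} H^{2i-4}_{\mb{C}}$; in particular for $i=2$ one gets $\ker\eta^{(2)} \cong \mb{C}/\mb{Z} \cong \mb{C}^*$ for every genus $g$.

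Finally I would assemble the conclusion. For $i = 2$ the argument needs only $k=1$ and $\ker\eta^{(2)}\cong\mb{C}^*$, both valid for all $g \ge 2$, so $\left(\ov{\mbf{J}}^2_{\mc{G}(2, \mc{L})}\right)_0$ is an extension of the abelian variety $J^1(\mr{Jac}(\widetilde{X}_0))^{d_{2,1}}$ by $\mb{C}^*$, hence semi-abelian. For $i = 3, 4$ the hypothesis $g \ge 5$ ensures both that $i$ lies in the admissible range $i \le \max\{2, g-1\} = g-1$ and that $H^4(M_{\widetilde{X}_0}(2,\widetilde{\mc{L}}_0))$ is genuinely two-dimensional of type $(2,2)$ (no relations occur in this degree once $g-1 \ge 4$), and the same extension argument applies. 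I expect the one delicate point to be the Hodge-type claim $F^{i-1}H^{2i-4}=0$ of the third paragraph: all else is numerical, but this vanishing is what forces the kernel to be a torus rather than a positive-dimensional compact factor, and it rests on the algebraicity of the low-degree even cohomology of the moduli space via Newstead's generators.
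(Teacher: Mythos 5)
Your proof is correct and follows essentially the same route as the paper's: both deduce from Theorem \ref{ntor12} that for $i=2,3,4$ only the $k=1$ factor $J^1(\mr{Jac}(\widetilde{X}_0))=\mr{Jac}(\widetilde{X}_0)$ survives in the pure part, and both use Newstead's computation that $H^0$, $H^2$, $H^4$ of $M_{\widetilde{X}_0}(2,\widetilde{\mc{L}}_0)$ are of Tate type to identify $\ker\eta^{(i)}$ with a power of $\mb{C}^*$. Your derivation of $d_{i,k}=0$ for $k\ge 2$ from the degree bound $i-3k+1<0$ is a small but welcome justification of values the paper simply states.
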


 \begin{proof}
   Recall, {\small $H^0(M_{\widetilde{X}_0}(2,\widetilde{\mc{L}}_0),\mb{C}) \cong \mb{C} \cong H^2(M_{\widetilde{X}_0}(2,\widetilde{\mc{L}}_0),\mb{C})$}
and $H^4(M_{\widetilde{X}_0}(2,\widetilde{\mc{L}}_0),\mb{C}) \cong \mb{C}^{\oplus 2}$ is concentrated in the $(2,2)$-Hodge type (see \cite[Theorem $1$]{new1}).
Denote by  
\[K_i:=\frac{H^{2i-4}(M_{\widetilde{X}_0}(2,\widetilde{\mc{L}}_0),\mb{C})}{F^{i-1}H^{2i-4}(M_{\widetilde{X}_0}(2,\widetilde{\mc{L}}_0),\mb{C})+H^{2i-4}(M_{\widetilde{X}_0}(2,\widetilde{\mc{L}}_0),\mb{Z})}\]
It is then easy to check that $K_i \cong \mb{C}^*$ for $i=2, 3$ and $K_4 \cong (\mb{C}^*)^{\oplus 2}$. 
Notations as in Theorem \ref{ntor12}. Note that for $i=2, 3$, we have $d_{i,1}=1$ and $d_{i,j}=0$ for $j \not= 1$. For $i=4$, we have $d_{i,1}=2$ and $d_{i,j}=0$ for $j \not= 1$.
Using \cite[\S $1.1$, \S $1.4$]{birk} observe that 
  \[J^1(\mr{Jac}(\widetilde{X}_0))= \frac{H^1(\mr{Jac}(\widetilde{X}_0), \mb{C})}{F^1 H^1(\mr{Jac}(\widetilde{X}_0), \mb{C}) \oplus H^1(\mr{Jac}(\widetilde{X}_0), \mb{Z})}
   =\frac{H^1(\widetilde{X}_0,\mb{C})}{F^1 H^1(\widetilde{X}_0,\mb{C}) \oplus H^1(\widetilde{X}_0,\mb{Z})}=\mr{Jac}(\widetilde{X}_0).
  \]
  Hence, $J^1(\mr{Jac}(\widetilde{X}_0))$ is an abelian variety. As product of abelian varieties is again an abelian variety,
  Theorem \ref{ntor12} implies that $pure\left(\ov{\mbf{J}}^i_{\mc{G}(2, \mc{L})}\right)_0$ is an abelian variety  and 
  $\left(\ov{\mbf{J}}^i_{\mc{G}(2, \mc{L})}\right)_0$ is an extension of an abelian variety by finitely many copies of $\mb{C}^*$, hence is
  a semi-abelian variety for $i=2$ for any $g \ge 2$ and $i=2, 3, 4$ for $g 
  \ge 5$. This proves the corollary.
 \end{proof}
 
 \section{Intermediate Jacobians of moduli spaces over nodal curves}\label{nsec5}
 
 It is well-known that moduli spaces of semi-stable sheaves with coprime rank and degree over smooth, projective curves are projective 
 and non-singular. So, the intermediate Jacobian of such a moduli space is well-defined and is in fact an abelian variety. In this section, we introduce
 (generalized) intermediate Jacobian of the moduli space $\mc{G}_{X_0}(2,\mc{L}_0)$ defined in \S \ref{subsec1}.
 We describe the intermediate Jacobian and prove that in some cases it is a semi-abelian variety.
 In this section, we follow Notation \ref{tor33} and notations in \S \ref{subsec1}.
 
 \begin{note}
 Denote by
 $i_1: \mc{G}_0 \cap \mc{G}_1 \hookrightarrow P_0, i_2: \mc{G}_0 \cap \mc{G}_1 \hookrightarrow \mc{G}_1
  \mbox{ and } i_3:\mc{G}_0 \cap \mc{G}_1 \hookrightarrow \mc{G}_0$ the natural inclusions.
  Recall, the kernel of the Gysin morphism from $\mc{G}_0 \cap \mc{G}_1$ to $\mc{G}_1$ and $P_0$:
 \end{note}

 \begin{prop}\label{ner15}
  The kernel of the Gysin morphism $(i_{1,*},i_{2,*})$ is given by 
  \[\ker((i_{1,*},i_{2,*}):H^{i-2}(\mc{G}_0 \cap \mc{G}_1,\mb{Q}) \to H^i(P_0,\mb{Q}) \oplus H^i(\mc{G}_1,\mb{Q})) \cong H^{i-4}(M_{\widetilde{X}_0}(2,\widetilde{\mc{L}}_0)).\]
   \end{prop}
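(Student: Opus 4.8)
The plan is to compute everything by the Leray--Hirsch theorem relative to $M:=M_{\widetilde{X}_0}(2,\widetilde{\mc{L}}_0)$. By the structure recalled in \S\ref{subsec1}, the three spaces $\mc{G}_1$, $\mc{G}_0\cap\mc{G}_1$ and $P_0$ are, respectively, a $\p3$--bundle, a $\mb{P}^1\times\mb{P}^1$--bundle and an $\ov{\mr{SL}}_2$--bundle over $M$. Since $\ov{\mr{SL}}_2$ is the smooth $3$--dimensional quadric, over $\mb{Q}$ one has $H^*(\ov{\mr{SL}}_2,\mb{Q})\cong\mb{Q}[u]/(u^4)$ with $u$ the hyperplane class, so all three fibres have cohomology generated by a single degree--$2$ class. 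Writing $\xi$ for the relative hyperplane class of $\mc{G}_1$, $u$ for that of $P_0$, and $h_1,h_2$ for the two ruling classes of $\mc{G}_0\cap\mc{G}_1$, Leray--Hirsch gives free $H^*(M)$--module descriptions
\[H^*(\mc{G}_1)=H^*(M)\{1,\xi,\xi^2,\xi^3\},\quad H^*(P_0)=H^*(M)\{1,u,u^2,u^3\},\quad H^*(\mc{G}_0\cap\mc{G}_1)=H^*(M)\{1,h_1,h_2,h_1h_2\},\]
with $h_i^2=0$. First I would record that $\mc{G}_0\cap\mc{G}_1$ is a divisor in each ambient space: a quadric surface (class $2\xi$) in the $\p3$--bundle $\mc{G}_1$, and the boundary divisor (a hyperplane section, class $u$) in the quadric bundle $P_0$. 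Since both are realised by the Segre embedding $\mb{P}^1\times\mb{P}^1\hookrightarrow\mb{P}^3$ fibrewise, the pullbacks of the hyperplane classes are $i_2^*\xi=h_1+h_2=i_1^*u$; as $i_1^*,i_2^*$ are ring homomorphisms, this determines them completely.

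The key reduction is the following general fact, valid because $\mc{G}_1,\mc{G}_0\cap\mc{G}_1,P_0$ are all smooth and projective: for a closed embedding $i$ of smooth projective varieties, the projection formula $\int i_*(x)\cup y=\int x\cup i^*y$ together with nondegeneracy of the Poincar\'e pairing gives $\ker(i_*)=(\Ima\, i^*)^{\perp}$. Hence
\[\ker\big((i_{1,*},i_{2,*})\big)=\ker(i_{1,*})\cap\ker(i_{2,*})=\big(\Ima\, i_1^*+\Ima\, i_2^*\big)^{\perp}.\]
Next I would read off both images from the ring maps above: $i_2^*$ sends $1,\xi,\xi^2,\xi^3$ to $1,\,h_1+h_2,\,2h_1h_2,\,0$, and $i_1^*$ sends $1,u,u^2,u^3$ to $1,\,h_1+h_2,\,2h_1h_2,\,0$; thus $\Ima\, i_1^*=\Ima\, i_2^*=H^*(M)\{1,\,h_1+h_2,\,h_1h_2\}$, a free rank--$3$ summand. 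A direct check against the fibrewise intersection form (in which $h_1+h_2$ and $h_1-h_2$ have self--pairing $2$ and $-2$, the classes $1$ and $h_1h_2$ pair to $1$, and all other pairings among $1,h_1+h_2,h_1-h_2,h_1h_2$ vanish) shows that the orthogonal complement of $H^*(M)\{1,h_1+h_2,h_1h_2\}$ is exactly $H^*(M)\cdot(h_1-h_2)$. Taking the degree--$(i-2)$ part and using that $h_1-h_2$ has degree $2$ yields $\ker((i_{1,*},i_{2,*}))\cap H^{i-2}=H^{i-4}(M)\cdot(h_1-h_2)\cong H^{i-4}(M_{\widetilde{X}_0}(2,\widetilde{\mc{L}}_0))$, as claimed.

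The main obstacle I anticipate is justifying the existence of the individual ruling classes $h_1,h_2$ as \emph{global} cohomology classes, equivalently ensuring that the monodromy of the quadric--surface bundle $\mc{G}_0\cap\mc{G}_1\to M$ does not interchange the two rulings; otherwise only the symmetric combinations $h_1+h_2$ and $h_1h_2$ are defined, and the anti--invariant class $h_1-h_2$, which carries the entire kernel, could a priori be twisted by a nontrivial rank--$1$ local system. I would resolve this using the explicit description coming from the wonderful compactification: the boundary divisor of $\ov{\mr{SL}}_2$ is $\mb{P}(\mb{C}^2)\times\mb{P}((\mb{C}^2)^\vee)$, so the two rulings are canonically distinguished (one from the bundle, the other from its dual) and this distinction is preserved in the family. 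This makes $h_1,h_2$, and in particular $h_1-h_2$, honest classes in $H^2(\mc{G}_0\cap\mc{G}_1,\mb{Q})$ and legitimises the Leray--Hirsch decomposition above. The only remaining routine points are the ring isomorphism $H^*(\ov{\mr{SL}}_2,\mb{Q})\cong\mb{Q}[u]/(u^4)$ and the identification $i_1^*u=i_2^*\xi=h_1+h_2$, both of which follow from the Segre description of the boundary as a hyperplane section.
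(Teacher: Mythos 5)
Your overall strategy is sound and your final answer is the correct one: reducing to $\ker(i_{j,*})=(\Ima\, i_j^*)^{\perp}$ via the projection formula and Poincar\'e duality, and then computing both images by Leray--Hirsch over $M:=M_{\widetilde{X}_0}(2,\widetilde{\mc{L}}_0)$, is a reasonable route (the paper itself only cites \cite[Proposition 4.1]{mumf} here, so there is no in-text argument to compare against). Your worry about the two rulings is also correctly resolved: $M$ is simply connected, and concretely $\mc{G}_0\cap\mc{G}_1\cong\mb{P}(\mc{E}_p^{\vee})\times_M\mb{P}(\mc{E}_q)$, where $\mc{E}_x$ denotes the restriction of the universal bundle on $M\times\widetilde{X}_0$ to $M\times\{x\}$ and $p,q$ are the preimages of the node; so $h_1,h_2$ are honest global classes.

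The genuine gap is that you compute throughout with the \emph{product} ring and intersection form: you set $h_i^2=0$, take $i_2^*\xi=h_1+h_2=i_1^*u$ on the nose, and diagonalize the fibrewise pairing. Leray--Hirsch is only a module isomorphism; in fact $h_i^2=-c_1(\mc{E}_i)h_i-c_2(\mc{E}_i)$ (with $\mc{E}_1=\mc{E}_p^{\vee}$, $\mc{E}_2=\mc{E}_q$), and $i_2^*\xi$, $i_1^*u$ agree with $h_1+h_2$ only modulo classes pulled back from $H^2(M)$. This is not cosmetic. Writing $A=H^*(M,\mb{Q})$ and $s=i_2^*\xi$, one checks that $\Ima\, i_1^*+\Ima\, i_2^*=A\{1,s,s^2,s^3\}$, and that $s^3\in A\{1,s,s^2\}$ --- which is exactly what makes this image have $A$-rank $3$ and the orthogonal complement have the claimed size --- holds if and only if the discriminants agree: $c_1(\mc{E}_1)^2-4c_2(\mc{E}_1)=c_1(\mc{E}_2)^2-4c_2(\mc{E}_2)$. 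For a general $\mb{P}^1\times\mb{P}^1$-bundle Segre-embedded in a $\mb{P}^3$-bundle this fails (try $\mc{E}_1=\mo\oplus\mo$ and $\mc{E}_2=\mo\oplus\mo(1)$ over $\mb{P}^2$ inside $\mb{P}(\mc{E}_1\otimes\mc{E}_2)$): the image then has rank $>3$ and the kernel is strictly smaller than $H^{i-4}(M)$, so the statement your argument actually proves is false in that generality. What rescues the proposition here is a geometric input you never invoke: $\mc{E}_p$ and $\mc{E}_q$ are restrictions of a single bundle to two points of the \emph{connected} curve $\widetilde{X}_0$, hence have equal Chern classes, so in particular equal discriminants. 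Once you add this, your computation does go through (the combined image is a free rank-$3$ module and its orthogonal complement is free of rank $1$ over $A$), except that the generator of the kernel is not $h_1-h_2$ itself but $h_1-h_2$ corrected by the class $\tfrac12\bigl(c_1(\mc{E}_1)-c_1(\mc{E}_2)\bigr)\in H^2(M,\mb{Q})$; the conclusion $\ker\cong H^{i-4}(M)$ is unaffected.
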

   
   \begin{proof}
    See \cite[Proposition $4.1$]{mumf} for a proof.
    \end{proof}

  Using the definition of intermediate Jacobian in the smooth, projective case, we define generalized 
  intermediate Jacobian of the singular variety $\mc{G}_{X_0}(2,\mc{L}_0)$. 
  
 \begin{defi}
   Define the $i$-th \emph{generalized intermediate Jacobian} of $\mc{G}_{X_0}(2,\mc{L}_0)$ as 
  \[J^i(\mc{G}_{X_0}(2,\mc{L}_0)):=\frac{H^{2i-1}(\mc{G}_{X_0}(2,\mc{L}_0),\mb{C})}{F^i H^{2i-1}(\mc{G}_{X_0}(2,\mc{L}_0),\mb{C})+H^{2i-1}(\mc{G}_{X_0}(2,\mc{L}_0),\mb{Z})}.\]
  \end{defi}
  
  We show that the generalized 
  intermediate Jacobian is a fibration by a product of abelian varieties over the central fiber of the associated
  N\'{e}ron model.
 
 \begin{thm}\label{ner16}
  The specialization morphism $\mr{sp}_{2i-1}$ (as in Proposition \ref{ntor03}) from $H^{2i-1}(\mc{G}_{X_0}(2,\mc{L}_0))$ to $H^{2i-1}(\mc{G}(2,\mc{L})_\infty)$  induces a 
  surjective morphism \[\tau: J^i(\mc{G}_{X_0}(2,\mc{L}_0)) \to \left(\ov{\mbf{J}}^i_{\mc{G}(2, \mc{L})}\right)_0\]
  with kernel isomorphic to $J^{i-1}(M_{\widetilde{X}_0}(2,\widetilde{\mc{L}}_0)) \times J^{i-2}(M_{\widetilde{X}_0}(2,\widetilde{\mc{L}}_0)) \times J^{i-3}(M_{\widetilde{X}_0}(2,\widetilde{\mc{L}}_0))$.
 \end{thm}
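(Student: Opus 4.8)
The plan is to realise both sides as intermediate Jacobians of mixed Hodge structures and to identify $\tau$ with the map functorially induced by $\mr{sp}_{2i-1}$. For a $\mb{Z}$-mixed Hodge structure $H$ with lattice $H_{\mb{Z}}$, underlying $\mb{C}$-vector space $H_{\mb{C}}$, and weights at most $2i-1$, write $J_i(H):=H_{\mb{C}}/(F^iH_{\mb{C}}+H_{\mb{Z}})$. By definition $J^i(\mc{G}_{X_0}(2,\mc{L}_0))=J_i(H^{2i-1}(\mc{G}_{X_0}(2,\mc{L}_0)))$, and by the identification obtained in the proof of Theorem \ref{ntor12} one has $\left(\ov{\mbf{J}}^i_{\mc{G}(2,\mc{L})}\right)_0\cong J_i(W_{2i-1}H^{2i-1}(\mc{G}(2,\mc{L})_\infty))$. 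Applying Proposition \ref{ntor03} with $i$ replaced by $2i-1$ yields a short exact sequence of mixed Hodge structures
\[0\to A\to H^{2i-1}(\mc{G}_{X_0}(2,\mc{L}_0))\xrightarrow{\mr{sp}_{2i-1}}W_{2i-1}H^{2i-1}(\mc{G}(2,\mc{L})_\infty)\to 0,\]
where $A:=\Ima f_{2i-1}$ is a quotient of $H^{2i-3}(\mc{G}_0\cap\mc{G}_1)(-1)$ and is therefore pure of weight $2i-1$. Since $\mr{sp}_{2i-1}$ is a morphism of mixed Hodge structures it is strict for $F^\bullet$ and carries the lattice onto the lattice (the surjectivity $\mr{sp}_{2i-1}(H^{2i-1}(\mc{G}_{X_0}(2,\mc{L}_0),\mb{Z}))=W_{2i-1}H^{2i-1}(\mc{G}(2,\mc{L})_\infty,\mb{Z})$ being established in the proof of Theorem \ref{ntor12}); hence it descends to $\tau$, and surjectivity of $\tau$ is immediate from the surjectivity of $\mr{sp}_{2i-1}$ over $\mb{C}$.

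Next I would compute $\ker\tau$. Applied to the sequence above, $J_i(-)$ is right exact, giving $J_i(A)\to J^i(\mc{G}_{X_0}(2,\mc{L}_0))\xrightarrow{\tau}\left(\ov{\mbf{J}}^i_{\mc{G}(2,\mc{L})}\right)_0\to 0$ and identifying $\ker\tau$ with the image of $J_i(A)$. It then suffices to show $J_i(A)\to J^i(\mc{G}_{X_0}(2,\mc{L}_0))$ is injective. Writing $B:=H^{2i-1}(\mc{G}_{X_0}(2,\mc{L}_0))$ and $C:=W_{2i-1}H^{2i-1}(\mc{G}(2,\mc{L})_\infty)$, the kernel of this map is $\left(A_{\mb{C}}\cap(F^iB_{\mb{C}}+B_{\mb{Z}})\right)/(F^iA_{\mb{C}}+A_{\mb{Z}})$. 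If $a=f+n\in A_{\mb{C}}$ with $f\in F^iB_{\mb{C}}$ and $n\in B_{\mb{Z}}$, then its image in $C$ satisfies $\bar n=-\bar f\in C_{\mb{R}}\cap F^iC_{\mb{C}}$; since every weight of $C$ is at most $2i-1<2i$, no Hodge type $(p,q)$ with $p,q\ge i$ occurs in $C$, so $F^iC_{\mb{C}}\cap\overline{F^iC_{\mb{C}}}=0$ and hence $C_{\mb{R}}\cap F^iC_{\mb{C}}=0$. Thus $\bar n=0$, forcing $n\in A_{\mb{Z}}$ and $f=a-n\in A_{\mb{C}}\cap F^iB_{\mb{C}}=F^iA_{\mb{C}}$. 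This proves injectivity, so $\ker\tau\cong J_i(A)$.

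It remains to identify $J_i(A)$. Since $\mc{G}_0\cap\mc{G}_1$ is a $\mb{P}^1\times\mb{P}^1$-bundle over $M_{\widetilde{X}_0}(2,\widetilde{\mc{L}}_0)$, the Leray--Hirsch theorem gives, with $M:=M_{\widetilde{X}_0}(2,\widetilde{\mc{L}}_0)$,
\[H^{2i-3}(\mc{G}_0\cap\mc{G}_1)\cong H^{2i-3}(M)\oplus H^{2i-5}(M)(-1)^{\oplus 2}\oplus H^{2i-7}(M)(-2).\]
Because $A$ is pure of weight $2i-1$, the restriction map to the two components is injective on $A$ (by the Mayer--Vietoris description it annihilates $W_{2i-2}$ and is injective on $\mr{Gr}^W_{2i-1}$), so $\ker f_{2i-1}$ coincides with the kernel of the Gysin map into the components; by Proposition \ref{ner15} (together with $\mc{G}_0\setminus Z'\cong P_0\setminus Z$) this kernel is a single copy of $H^{2i-5}(M)$. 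Consequently $A\cong H^{2i-3}(M)(-1)\oplus H^{2i-5}(M)(-2)\oplus H^{2i-7}(M)(-3)$ as pure weight-$(2i-1)$ Hodge structures, and tracking the index $i$ through each Tate twist shows that $J_i$ of the three summands equals $J^{i-1}(M)$, $J^{i-2}(M)$ and $J^{i-3}(M)$ respectively. This gives $\ker\tau\cong J^{i-1}(M)\times J^{i-2}(M)\times J^{i-3}(M)$, as asserted. (For $i=2$ all three factors vanish, recovering the isomorphism $\tau$ of Corollary \ref{ntor18}.)

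The formal parts -- the exact sequence, right exactness of $J_i(-)$, and the weight-based injectivity -- present no difficulty. I expect the main obstacle to be the third step: passing correctly from the Gysin map into the singular union $\mc{G}_{X_0}(2,\mc{L}_0)$ to the Gysin maps into the smooth pieces $P_0$ and $\mc{G}_1$ controlled by Proposition \ref{ner15}, and then bookkeeping the Tate twists that distribute the summands among $J^{i-1}$, $J^{i-2}$ and $J^{i-3}$. One must also check that the Leray--Hirsch splitting and the identification of $\ker f_{2i-1}$ hold integrally, not merely rationally, so that the lattices underlying the three tori match and the final isomorphism is one of complex tori.
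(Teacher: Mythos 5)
Your proposal follows essentially the same route as the paper's proof: the exact sequence of Proposition \ref{ntor03}, Proposition \ref{ner15} combined with Leray--Hirsch for the $\mb{P}^1\times\mb{P}^1$-bundle $\mc{G}_0\cap\mc{G}_1\to M_{\widetilde{X}_0}(2,\widetilde{\mc{L}}_0)$, and the same Tate-twist bookkeeping identifying the three summands of $\Ima(f_{2i-1})$ with $J^{i-1}$, $J^{i-2}$ and $J^{i-3}$ of $M_{\widetilde{X}_0}(2,\widetilde{\mc{L}}_0)$. The one step you leave as a flagged black box is, however, exactly where the paper does real work: Proposition \ref{ner15} computes the kernel of the Gysin map into $P_0\oplus\mc{G}_1$, whereas the map $f_{2i-1}$ of Proposition \ref{ntor03} involves the actual component $\mc{G}_0$, and the isomorphism $P_0\setminus Z\cong\mc{G}_0\setminus Z'$ alone does not give $\ker(i_{1,*})=\ker(i_{3,*})$, since Gysin images are not determined by open subsets. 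The paper's argument introduces a common resolution $\tau_1:W\to P_0$, $\tau_2:W\to\mc{G}_0$ with $\mc{G}_0\cap\mc{G}_1$ embedded in $W$ via $l$, uses that $\Ima(i_1)$ and $\Ima(i_3)$ avoid $Z$ and $Z'$ together with the relative cohomology sequence and $\tau_{j,*}\tau_j^*=\mr{Id}$ to prove $\tau_1^*\circ i_{1,*}=l_*=\tau_2^*\circ i_{3,*}$, and then invokes purity and strictness to conclude $i_{1,*}(\xi)=0\Leftrightarrow l_*(\xi)=0\Leftrightarrow i_{3,*}(\xi)=0$. You would need to supply this argument (or an equivalent) for the proof to close.

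On the other side, your passage from cohomology to Jacobians is more careful than the paper's: the right-exactness of $J_i(-)$ plus the injectivity of $J_i(A)\to J^i(\mc{G}_{X_0}(2,\mc{L}_0))$, proved via $F^iC_{\mb{C}}\cap\ov{F^iC_{\mb{C}}}=0$ for the weight-$\le 2i-1$ structure $C=W_{2i-1}H^{2i-1}(\mc{G}(2,\mc{L})_\infty)$ together with strictness of the sub-MHS $A$, is a genuine gap-filler: the paper deduces $\ker(\tau)$ from $\ker(\mr{sp}_{2i-1})$ without comment. Your Tate-twist accounting and the identification of $\ker f_{2i-1}$ with the kernel of the Gysin map into the components (using that $A$ is pure of weight $2i-1$ and the Mayer--Vietoris kernel has lower weight) are both correct.
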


 \begin{proof}
  The surjectivity of $\tau$ follows from definition (see \eqref{t02}). We now prove the statement on the kernel of $\tau$.
  There exist closed subschemes $Z \subset P_0$ and $Z' \subset \mc{G}_0$ 
  such that $P_0 \backslash Z \cong \mc{G}_0 \backslash Z'$ (see \S \ref{subsec1}).
  Using \cite{gies} (see also \cite[P. $27$]{tha} or \cite[Remark $6.5$(c), Theorem $6.2$]{sesh4}), one can observe that 
  $Z \cap \Ima(i_1)=\emptyset=Z' \cap \Ima(i_3)$ 
  and there exists a smooth, projective variety $W$ along with proper, birational morphisms $\tau_1:W \to P_0$ and $\tau_2:W \to \mc{G}_0$ such that 
  \[W \backslash \tau_1^{-1}(Z) \cong P_0 \backslash Z \cong \mc{G}_0 \backslash Z' \cong W \backslash \tau_2^{-1}(Z').\]
 Therefore, there exists a natural closed immersion $l:\mc{G}_0 \cap \mc{G}_1 \to W$ such that $i_1=\tau_1 \circ l$ and $i_3=\tau_2 \circ l$.
 We claim that given any $\xi \in H^{k-2}(\mc{G}_0 \cap \mc{G}_1,\mb{Q})$, we have $\tau_1^* \circ i_{1,*}(\xi)=l_*(\xi)=\tau_2^* \circ i_{3,*}(\xi)$.
  Indeed, since $\Ima(i_1)$ (resp. $\Ima(i_3)$) does not intersect $Z$ (resp. $Z'$), the pullback of $l_*(\xi)$ to $\tau_1^{-1}(Z)$ and $\tau_2^{-1}(Z')$ vanish.
  Using the (relative) cohomology exact sequence (\cite[Proposition $5.54$]{pet}), we conclude that there exists $\beta_1 \in H^k(P_0)$ and $\beta_2 \in H^k(\mc{G}_0)$
  such that $\tau_1^*(\beta_1)=l_*(\xi)=\tau_2^*(\beta_2)$. Applying $\tau_{1,*}$ and $\tau_{2,*}$ to the two equalities respectively and using \cite[Proposition $B.27$]{pet}, we get 
  \[\beta_1=\tau_{1,*}\tau_1^*(\beta_1)=\tau_{1,*}l_*(\xi)=i_{1,*}(\xi) \mbox{ and } \beta_2=\tau_{2,*}\tau_2^*(\beta_2)=\tau_{2,*}l_*(\xi)=i_{3,*}(\xi).\]
  In other words, $\tau_1^* \circ i_{1,*}(\xi)=l_*(\xi)=\tau_2^* \circ i_{3,*}(\xi)$. This proves the claim.
  Since Gysin morphisms are morphisms of mixed Hodge structures and $H^{k-2}(\mc{G}_0 \cap \mc{G}_1,\mb{Q})$ is a pure Hodge structure, 
  we have $i_{1,*}(\xi) \in \mr{Gr}^W_k H^k(P_0,\mb{Q})$
  and $i_{3,*}(\xi) \in \mr{Gr}^W_k H^k(\mc{G}_0,\mb{Q})$. Using \cite[Theorem $5.41$]{pet}, we then conclude that $i_{1,*}(\xi)=0$ (resp. $i_{3,*}(\xi)=0$) if and only if $l_*(\xi)=0$. In other words, 
  $\ker(i_{1,*}) \cong \ker(i_{3,*})$. 
  Using Proposition \ref{ner15}, \eqref{ntor02} becomes the following exact sequence of MHS:
   \[0 \to H^{2i-5}(M_{\widetilde{X}_0}(2,\widetilde{\mc{L}}_0),\mb{Q})(-2) \to H^{2i-3}(\mc{G}_0 \cap \mc{G}_1,\mb{Q})(-1) \xrightarrow{f_{2i-1}}
    H^{2i-1}(\mc{G}_{X_0}(2,\mc{L}_0), \mb{Q}) \xrightarrow{\mr{sp}_{2i-1}}\]\[ \xrightarrow{\mr{sp}_{2i-1}} H^{2i-1}(\mc{G}(2,\mc{L})_\infty,\mb{Q}).
   \]
   Recall, $\mc{G}_0 \cap \mc{G}_1$ is a $\mb{P}^1 \times \mb{P}^1$-bundle over $M_{\widetilde{X}_0}(2,\widetilde{\mc{L}}_0)$.
   Denote by \[\rho_1: \mc{G}_0 \cap \mc{G}_1 \to M_{\widetilde{X}_0}(2,\widetilde{\mc{L}}_0)\] the natural projection.
   By the Deligne-Blanchard theorem \cite{delibla} (the Leray spectral sequence
degenerates at $E_2$ for smooth families), we have $H^{k}(\mc{G}_0 \cap \mc{G}_1, \mb{Q}) \cong  \oplus_{_j} H^{k-j}(R^j \rho_{1,*}\mb{Q})$ for all $k \ge 0$.
Since $M_{\widetilde{X}_0}(2,\widetilde{\mc{L}}_0)$ is smooth and simply connected, the local system
$R^j \rho_{1,*} \mb{Q}$ is trivial.
Therefore, for any $y \in M_{\widetilde{X}_0}(2,\widetilde{\mc{L}}_0)$, the natural morphism
\[H^{k}(\mc{G}_0 \cap \mc{G}_1, \mb{Q}) \twoheadrightarrow H^0(R^{k} \rho_{1,*} \mb{Q}) \to H^{k}((\mc{G}_0 \cap \mc{G}_1)_y,\mb{Q})\]
is surjective for all $k \ge 0$.
Then, by the Leray-Hirsch theorem, we have 
\[H^{2i-3}(\mc{G}_0 \cap \mc{G}_1, \mb{Q}) \cong \bigoplus\limits_j H^{2i-3-j}(M_{\widetilde{X}_0}(2,\widetilde{\mc{L}}_0), \mb{Q}) \otimes H^j(\mb{P}^1 \times \mb{P}^1, \mb{Q}).\]
Recall, $H^0(\mb{P}^1 \times \mb{P}^1, \mb{Q}) \cong \mb{Q} \cong H^4(\mb{P}^1 \times \mb{P}^1, \mb{Q})$ and $H^2(\mb{P}^1 \times \mb{P}^1, \mb{Q}) \cong 
\mb{Q} \oplus \mb{Q}$. Furthermore, $H^i(\mb{P}^1 \times \mb{P}^1, \mb{Q})=0$ for $i$ odd and $i>4$.
It is then easy to check that $\ker(\mr{sp}_{2i-1})$ is isomorphic as a pure Hodge structure to 
\[H^{2i-3}(M_{\widetilde{X}_0}(2,\widetilde{\mc{L}}_0),\mb{Q})(-1) \oplus H^{2i-5}(M_{\widetilde{X}_0}(2,\widetilde{\mc{L}}_0),\mb{Q})(-2) \oplus H^{2i-7}(M_{\widetilde{X}_0}(2,\widetilde{\mc{L}}_0),\mb{Q})(-3).\]
 This implies $\ker(\tau) \cong J^{i-1}(M_{\widetilde{X}_0}(2,\widetilde{\mc{L}}_0)) \times J^{i-2}(M_{\widetilde{X}_0}(2,\widetilde{\mc{L}}_0)) \times J^{i-3}(M_{\widetilde{X}_0}(2,\widetilde{\mc{L}}_0))$.
 This proves the theorem.
 \end{proof}

 \begin{cor}\label{ntor18}
  The generalized intermediate Jacobian $J^2(\mc{G}_{X_0}(2,\mc{L}_0))$ is isomorphic to the central fiber $\left(\ov{\mbf{J}}^2_{\mc{G}(2, \mc{L})}\right)_0$
  of the N\'{e}ron model. In particular, $J^2(\mc{G}_{X_0}(2,\mc{L}_0))$ is a semi-abelian variety.
 \end{cor}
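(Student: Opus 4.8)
The plan is to specialize Theorem \ref{ner16} to the case $i=2$ and to verify that the kernel of the resulting surjection collapses. Theorem \ref{ner16} produces a surjective morphism $\tau: J^2(\mc{G}_{X_0}(2,\mc{L}_0)) \to \left(\ov{\mbf{J}}^2_{\mc{G}(2, \mc{L})}\right)_0$, induced by the specialization morphism $\mr{sp}_3$, whose kernel is isomorphic to
\[J^{1}(M_{\widetilde{X}_0}(2,\widetilde{\mc{L}}_0)) \times J^{0}(M_{\widetilde{X}_0}(2,\widetilde{\mc{L}}_0)) \times J^{-1}(M_{\widetilde{X}_0}(2,\widetilde{\mc{L}}_0)).\]
Thus it suffices to show that all three factors vanish, for then $\tau$ is an injective surjection, hence an isomorphism, and the first assertion follows at once.

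Two of the factors vanish for trivial reasons. Unwinding the definition \eqref{ner18}, the groups $J^{0}$ and $J^{-1}$ are quotients of $H^{-1}(M_{\widetilde{X}_0}(2,\widetilde{\mc{L}}_0),\mb{C})$ and $H^{-3}(M_{\widetilde{X}_0}(2,\widetilde{\mc{L}}_0),\mb{C})$, respectively, and these cohomology groups are zero since the indices are negative. For the remaining factor $J^{1}(M_{\widetilde{X}_0}(2,\widetilde{\mc{L}}_0))$, I would reuse the rational connectedness of the moduli space already exploited in the proof of Theorem \ref{ntor14}: exactly as in that argument, now applied over the smooth curve $\widetilde{X}_0$, rational connectedness of $M_{\widetilde{X}_0}(2,\widetilde{\mc{L}}_0)$ forces $H^1(M_{\widetilde{X}_0}(2,\widetilde{\mc{L}}_0),\mb{Q})=0$, whence $J^1(M_{\widetilde{X}_0}(2,\widetilde{\mc{L}}_0))=0$. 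Consequently $\ker\tau=0$ and $\tau$ is an isomorphism, giving $J^2(\mc{G}_{X_0}(2,\mc{L}_0)) \cong \left(\ov{\mbf{J}}^2_{\mc{G}(2, \mc{L})}\right)_0$.

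The final clause is then immediate: Corollary \ref{corsemiab} identifies $\left(\ov{\mbf{J}}^2_{\mc{G}(2, \mc{L})}\right)_0$ with a semi-abelian variety, and since $\tau$ is induced by a morphism of mixed Hodge structures it is an isomorphism of complex Lie groups, so the semi-abelian structure transports to $J^2(\mc{G}_{X_0}(2,\mc{L}_0))$. I do not expect a genuine obstacle here; the corollary is essentially a bookkeeping consequence of Theorem \ref{ner16}. The only point deserving attention is the vanishing of $J^1(M_{\widetilde{X}_0}(2,\widetilde{\mc{L}}_0))$, and that rests on the rational connectedness input already in hand.
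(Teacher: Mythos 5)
Your proposal is correct and follows essentially the same route as the paper: the paper's proof simply asserts that $\ker(\tau)=0$ for $i=2$ and invokes Theorem \ref{ner16} and Corollary \ref{corsemiab}, while you supply the (correct) justification that $J^{0}$ and $J^{-1}$ vanish for degree reasons and $J^{1}(M_{\widetilde{X}_0}(2,\widetilde{\mc{L}}_0))$ vanishes since $H^1(M_{\widetilde{X}_0}(2,\widetilde{\mc{L}}_0),\mb{Q})=0$, exactly as used elsewhere in the paper.
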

 
 \begin{proof}
  Notations as in Theorem \ref{ner16}. In this case, $\ker(\tau)=0$. The corollary then follows immediately from Theorem \ref{ner16} and Corollary \ref{corsemiab}.
 \end{proof}


\begin{thebibliography}{10}

\bibitem{abe}
T.~Abe.
\newblock The moduli stack of rank-two {G}ieseker bundles with fixed
  determinant on a nodal curve {II}.
\newblock {\em International Journal of Mathematics}, 20(07):859--882, 2009.

\bibitem{alex2}
V.~Alexeev.
\newblock Compactified {J}acobians and {T}orelli map.
\newblock {\em Publications of the Research Institute for Mathematical
  Sciences}, 40(4):1241--1265, 2004.

\bibitem{atyang}
M.~F. Atiyah and R.~Bott.
\newblock The {Y}ang-{M}ills equations over {R}iemann surfaces.
\newblock {\em Philosophical Transactions of the Royal Society of London A:
  Mathematical, Physical and Engineering Sciences}, 308(1505):523--615, 1983.

\bibitem{bala1}
V~Balaji.
\newblock Intermediate {J}acobian of some moduli spaces of vector bundles on
  curves.
\newblock {\em American Journal of Mathematics}, 112(4):611--629, 1990.

\bibitem{indpre}
S.~Basu, A.~Dan, and I.~Kaur.
\newblock Degeneration of intermediate {J}acobians and the {T}orelli theorem.
\newblock {\em to appear in {D}ocumenta {M}athematica}, 2019.

\bibitem{birk}
C.~Birkenhake and H.~Lange.
\newblock {\em Complex abelian varieties}, volume 302.
\newblock Springer Science \& Business Media, 2013.

\bibitem{bn}
S.~Bosch, W.~L{\"u}tkebohmert, and M.~Raynaud.
\newblock {\em N{\'e}ron models}, volume~21.
\newblock Springer Science \& Business Media, 2012.

\bibitem{brosn2}
P.~Brosnan, G.~Pearlstein, and M.~Saito.
\newblock A generalization of the {N}{\'e}ron models of {G}reen, {G}riffiths
  and {K}err.
\newblock {\em arXiv preprint arXiv:0809.5185}, 2008.

\bibitem{capon}
L.~Caporaso.
\newblock N{\'e}ron models and compactified {P}icard schemes over the moduli
  stack of stable curves.
\newblock {\em American journal of mathematics}, 130(1):1--47, 2008.

\bibitem{casa}
S.~Casalaina-Martin and R.~Laza.
\newblock The moduli space of cubic threefolds via degenerations of the
  intermediate {J}acobian.
\newblock {\em Journal f{\"u}r die reine und angewandte Mathematik (Crelles
  Journal)}, 2009(633):29--65, 2009.

\bibitem{cle}
C.~H. Clemens and P.~A. Griffiths.
\newblock The intermediate {J}acobian of the cubic threefold.
\newblock {\em Annals of Mathematics}, pages 281--356, 1972.

\bibitem{mumf}
A.~Dan and I.~Kaur.
\newblock Generalization of a conjecture of mumford.
\newblock {\em arXiv preprint arXiv:1908.02279}, 2019.

\bibitem{bano}
S.~del Ba{\~n}o.
\newblock On the motive of moduli spaces of rank two vector bundles over a
  curve.
\newblock {\em Compositio Mathematica}, 131(1):1--30, 2002.

\bibitem{delibla}
P.~Deligne.
\newblock Th{\'e}oreme de {L}efschetz et criteres de d{\'e}g{\'e}n{\'e}rescence
  de suites spectrales.
\newblock {\em Publications Math{\'e}matiques de l'Institut des Hautes
  {\'E}tudes Scientifiques}, 35(1):107--126, 1968.

\bibitem{deli2}
P.~Deligne.
\newblock {\em {\'E}quations diff{\'e}rentielles {\`a} points singuliers
  r{\'e}guliers}, volume 163.
\newblock Springer, 2006.

\bibitem{est}
E.~Esteves.
\newblock Compactifying the relative {J}acobian over families of reduced
  curves.
\newblock {\em Transactions of the American Mathematical Society},
  353(8):3045--3095, 2001.

\bibitem{fult}
W.~Fulton.
\newblock {\em Intersection theory}, volume~2.
\newblock Springer Science \& Business Media, 2013.

\bibitem{gies}
D.~Gieseker.
\newblock A degeneration of the moduli space of stable bundles.
\newblock {\em Journal of Differential Geometry}, 19(1):173--206, 1984.

\bibitem{green}
M.~Green, P.~Griffiths, and M.~Kerr.
\newblock N{\'e}ron models and limits of {A}bel--{J}acobi mappings.
\newblock {\em Compositio Mathematica}, 146(2):288--366, 2010.

\bibitem{GH}
P.~Griffiths and J.~Harris.
\newblock Infinitesimal variations of {H}odge structure ({II}): an
  infinitesimal invariant of {H}odge classes.
\newblock {\em Composition Mathematica}, 50(2-3):207--265, 1983.

\bibitem{grif1}
P.~A. Griffiths.
\newblock Periods of integrals on algebraic manifolds, {I}.(construction and
  properties of the modular varieties).
\newblock {\em American Journal of Mathematics}, 90(2):568--626, 1968.

\bibitem{grif2}
P.~A. Griffiths.
\newblock Periods of integrals on algebraic manifolds, {II}:(local study of the
  period mapping).
\newblock {\em American Journal of Mathematics}, 90(3):805--865, 1968.

\bibitem{grif4}
P.~A. Griffiths.
\newblock Periods of integrals on algebraic manifolds: Summary of main results
  and discussion of open problems.
\newblock {\em Bulletin of the American Mathematical Society}, 76(2):228--296,
  1970.

\bibitem{hori}
A.~H{\"o}ring.
\newblock Minimal classes on the intermediate {J}acobian of a generic cubic
  threefold.
\newblock {\em Communications in Contemporary Mathematics}, 12(01):55--70,
  2010.

\bibitem{huy}
D.~Huybrechts and M.~Lehn.
\newblock {\em The geometry of moduli spaces of sheaves}.
\newblock Springer, 2010.

\bibitem{JAD2}
A.~Javanpeykar and D.~Loughran.
\newblock Complete intersections: {M}oduli, {T}orelli, and good reduction.
\newblock {\em Mathematische Annalen}, 368(3-4):1191--1225, 2017.

\bibitem{kane}
V.~Kanev.
\newblock Intermediate {J}acobians and {C}how groups of threefolds with a
  pencil of del {P}ezzo surfaces.
\newblock {\em Annali di Matematica Pura ed Applicata}, 154(1):13--48, 1989.

\bibitem{ind}
I.~Kaur.
\newblock {\em The ${C_1}$ conjecture for the moduli space of stable vector
  bundles with fixed determinant on a smooth projective curve}.
\newblock Ph. d. thesis, Freie University Berlin, 2016.

\bibitem{ink3}
I.~Kaur.
\newblock Smoothness of moduli space of stable torsionfree sheaves with fixed
  determinant in mixed characteristic.
\newblock {\em Analytic and Algebraic Geometry}, pages 173--186, 2017.

\bibitem{K4}
I.~Kaur.
\newblock Existence of semistable vector bundles with fixed determinants.
\newblock {\em Journal of Geometry and Physics}, 138:90--102, 2019.

\bibitem{kingn}
A.~D. King and P.~E. Newstead.
\newblock On the cohomology ring of the moduli space of rank 2 vector bundles
  on a curve.
\newblock {\em Topology}, 37(2):407--418, 1998.

\bibitem{kuli}
V.~S. Kulikov.
\newblock {\em Mixed Hodge structures and singularities}, volume 132.
\newblock Cambridge University Press, 1998.

\bibitem{mumn}
D.~Mumford and P.~Newstead.
\newblock Periods of a moduli space of bundles on curves.
\newblock {\em American Journal of Mathematics}, 90(4):1200--1208, 1968.

\bibitem{newt}
P.~E. Newstead.
\newblock Topological properties of some spaces of stable bundles.
\newblock {\em Topology}, 6(2):241--262, 1967.

\bibitem{new1}
P.~E. Newstead.
\newblock Characteristic classes of stable bundles of rank 2 over an algebraic
  curve.
\newblock {\em Transactions of the American Mathematical Society},
  169:337--345, 1972.

\bibitem{pan}
R.~Pandharipande.
\newblock A compactification over {$M_g$} of the {U}niversal {M}oduli {S}pace
  of {S}lope-{S}emistable vector bundles.
\newblock {\em Journal of the American Mathematical Society}, 9(2):425--471,
  1996.

\bibitem{pet}
C.~Peters and J.~H.~M. Steenbrink.
\newblock {\em Mixed {H}odge structures}, volume~52.
\newblock Springer Science \& Business Media, 2008.

\bibitem{pezz}
G.~Pezzini.
\newblock Lectures on spherical and wonderful varieties.
\newblock {\em Les cours du CIRM}, 1(1):33--53, 2010.

\bibitem{sai}
M.~Saito.
\newblock Admissible normal functions.
\newblock {\em J. Algebraic Geom.}, 5(2):235--276, 1996.

\bibitem{sai3}
M.~Saito.
\newblock Hausdorff property of the {N}{\'e}ron models of {G}reen, {G}riffiths
  and {K}err.
\newblock {\em arXiv preprint arXiv:0803.2771}, 2008.

\bibitem{schvar}
W.~Schmid.
\newblock Variation of {H}odge structure: the singularities of the period
  mapping.
\newblock {\em Inventiones mathematicae}, 22(3-4):211--319, 1973.

\bibitem{schnel}
C.~Schnell.
\newblock Complex analytic {N}{\'e}ron models for arbitrary families of
  intermediate {J}acobians.
\newblock {\em Inventiones mathematicae}, 188(1):1--81, 2012.

\bibitem{sesh4}
C.~S. Seshadri.
\newblock Degenerations of the moduli spaces of vector bundles on curves.
\newblock {\em ICTP Lecture Notes}, 1, 2000.

\bibitem{ste1}
J.~Steenbrink.
\newblock Limits of {H}odge structures.
\newblock {\em Inventiones mathematicae}, 31:229--257, 1976.

\bibitem{sun1}
X.~Sun.
\newblock Moduli spaces of {SL}(r)-bundles on singular irreducible curves.
\newblock {\em Asia Journal of Mathematics}, 7(4):609--625, 2003.

\bibitem{tha}
M.~Thaddeus.
\newblock {\em Algebraic geometry and the Verlinde formula}.
\newblock PhD thesis, University of Oxford, 1992.

\bibitem{v4}
C.~Voisin.
\newblock {\em {H}odge Theory and Complex Algebraic Geometry-I}.
\newblock Cambridge studies in advanced mathematics-76. Cambridge University
  press, 2002.

\bibitem{v5}
C.~Voisin.
\newblock {\em {H}odge Theory and Complex Algebraic Geometry-II}.
\newblock Cambridge studies in advanced mathematics-77. Cambridge University
  press, 2003.

\bibitem{zuck}
S.~Zucker.
\newblock Generalized intermediate {J}acobians and the theorem on normal
  functions.
\newblock {\em Inventiones mathematicae}, 33(3):185--222, 1976.

\end{thebibliography}
\end{document}